\newtheorem{theorem}{Theorem}[section]
\newtheorem{proposition}[theorem]{Proposition}
\newtheorem{corollary}[theorem]{Corollary}
\theoremstyle{definition}
\newtheorem{definition}[theorem]{Definition}
\newtheorem{remark}[theorem]{Remark}
\newcommand{\scp}[1]{\langle#1\rangle}
\makeatletter \thm@headfont{\bfseries\scshape} \makeatother
\begin{document}
\title[Integral powers of numbers in small intervals
modulo 1]
      {Integral powers of numbers in small intervals
modulo 1: The cardinality gap phenomenon}
\author[Johannes Schleischitz]
       {Johannes Schleischitz}

\address{{\bf{Augasse 2-6, 1090 Vienna}}\\
             }
\email{johannes.schleischitz@boku.ac.at}

\keywords{distribution modulo $1$, distribution of powers, Pisot number, Salem numbers, Mahler's 3/2-problem}

\subjclass{11B05, 11J71, 11K16, 11K31}

 \thanks{supported by FWF grant P24828}

\begin{abstract}
This paper deals with the distribution of $\alpha \zeta^{n} \bmod 1$, where
$\alpha\neq 0,\zeta>1$ are fixed real numbers and $n$ runs through the positive integers.
Denote by $\Vert.\Vert$ the distance to the nearest integer.
We investigate the case of $\alpha\zeta^{n}$ all lying in prescribed small intervals modulo $1$
for all large $n$, with focus on the case $\Vert\alpha \zeta^{n}\Vert \leq \epsilon$ for small $\epsilon>0$.
We are particularly interested in what we call cardinality gap phenomena.
For example for fixed $\zeta>1$ and small $\epsilon>0$
there are at most countably many values of $\alpha$ such that $\Vert\alpha \zeta^{n}\Vert \leq \epsilon$
for all large $n$, whereas larger $\epsilon$ induces an uncountable set.
We investigate the value of $\epsilon$ at which the gap occurs.
We will pay particular attention to the case of algebraic and, more specific,
rational $\zeta>1$. Results concerning Pisot and Salem numbers
such as some contribution to Mahler's $3/2$-problem
are implicitly deduced. We study similar questions for fixed $\alpha\neq 0$ as well.

 \end{abstract}%

\vskip0.5cm%

\maketitle

\begin{centerline}%
{\it Communicated by Michael Drmota}
\end{centerline}
\vskip0.5cm%

\section{Notation and known results} \label{ggg}

This paper aims to study the distribution of $\alpha\zeta^{n}$ mod $1$ for real numbers $\alpha\neq 0,\zeta>1$.
We start with some definitions concerning representations of numbers modulo $1$.

\begin{definition}
For $x\in{\mathbb{R}}$ denote by $\lfloor x\rfloor\in{\mathbb{Z}}$ the largest integer smaller or equal $x$,
and $\lceil x\rceil\in{\mathbb{Z}}$ the smallest integer greater or equal $x$. Let further $\{x\}\in{[0,1)}$ be the fractional part of $x$, i.e. $\{x\}=x-\lfloor x\rfloor$.
Furthermore denote by $\scp{x}\in{\mathbb{Z}}$ the integer closest to $x$,
where clearly $\scp{x}\in{\{\lfloor x\rfloor, \lceil x\rceil\}}$.
(In the special case $\{x\}=1/2$ let $\scp{x}:= \lfloor x\rfloor$, however it
will not be of much interest in the sequel.)
Finally, denote by $\Vert x\Vert:=\vert x-\scp{x}\vert\in{[0,1/2]}$ the distance
from $x$ to the nearest integer.
\end{definition}

\begin{definition}
For set $A$ denote by $\vert A\vert$ the cardinality of $A$.
\end{definition}

The following theorem comprises two important metric uniform distribution results.
One is due to Weyl~\cite{25} and the other due to Koksma~\cite{26}.

\begin{theorem}[Weyl, Koksma] \label{kok}
For any fixed real $\alpha\neq 0$, for almost all $\zeta>1$ the sequence $\{\alpha\zeta^{n}\}$
is uniformly distributed modulo $1$. For any fixed $\zeta>1$, for almost all real $\alpha$
the sequence $\{\alpha\zeta^{n}\}$ is uniformly distributed modulo $1$.
\end{theorem}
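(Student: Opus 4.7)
The starting point is Weyl's equidistribution criterion: a sequence $(x_n)_{n\ge 1}$ is uniformly distributed modulo $1$ if and only if for every nonzero integer $h$ the exponential sums $\frac{1}{N}\sum_{n=1}^N e^{2\pi i h x_n}$ tend to zero. Since a countable union of null sets is null, it suffices to show, for each fixed nonzero $h\in\mathbb{Z}$, that the sum $S_N:=\sum_{n=1}^N e^{2\pi i h \alpha \zeta^n}$ satisfies $S_N/N\to 0$ almost everywhere in whichever variable is free.

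For the Koksma part (fixed $\zeta>1$, variable $\alpha$), I would work on a bounded interval $\alpha\in[-R,R]$ and expand
$$\int_{-R}^{R}|S_N(\alpha)|^2\,d\alpha \;=\; 2RN + 2\,\mathrm{Re}\!\!\sum_{1\le m<n\le N}\int_{-R}^{R}e^{2\pi i h\alpha(\zeta^n-\zeta^m)}\,d\alpha.$$
Each off-diagonal integral is bounded in absolute value by $1/(\pi |h|\,|\zeta^n-\zeta^m|)\le C_{h,\zeta}\,\zeta^{-(n-1)}$, and a geometric-series sum over $(m,n)$ shows the total off-diagonal contribution is $O(1)$. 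Hence $\int_{-R}^{R}|S_N|^2\,d\alpha=O(N)$.

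For the Weyl part (fixed $\alpha\neq 0$, variable $\zeta$) I would exhaust $(1,\infty)$ by intervals $[a,b]$ with $a>1$ and expand $\int_a^b|S_N(\zeta)|^2\,d\zeta$ analogously. Here the phase $\phi(\zeta)=\zeta^n-\zeta^m$ satisfies $\phi'(\zeta)=n\zeta^{n-1}-m\zeta^{m-1}\ge(n-m)a^{m-1}>0$ and is monotonic on $[a,b]$, so a single integration by parts (van der Corput's first derivative test) gives
$$\Bigl|\int_a^b e^{2\pi i h\alpha\phi(\zeta)}\,d\zeta\Bigr|\;\le\;\frac{C_{h,\alpha}}{(n-m)\,a^{m-1}}.$$
Summing the off-diagonal terms yields a bound $O(\log N)$, so again $\int_a^b|S_N|^2\,d\zeta=O(N)$.

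In both settings the standard passage from an $L^2$ mean-square bound to almost-everywhere convergence now applies: along the subsequence $N_k:=k^2$ one has $\sum_k\int|S_{N_k}/N_k|^2\,= \sum_k O(1/k^2)<\infty$, whence $S_{N_k}/N_k\to 0$ almost everywhere by Borel--Cantelli. The trivial gap estimate $|S_N-S_{N_k}|\le N_{k+1}-N_k=O(\sqrt{N_k})$ for $N_k\le N\le N_{k+1}$ then fills in the intermediate indices. I expect the main technical step to be the oscillatory-integral bound in the Weyl part: the lower bound on $\phi'$ degrades as $a\downarrow 1$, which is precisely why one is forced to exhaust $(1,\infty)$ by intervals bounded away from $1$ before taking a countable union of null sets at the end.
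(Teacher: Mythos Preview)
The paper does not prove this theorem at all: it is stated as a classical result and attributed to Weyl and Koksma via the references~\cite{25} and~\cite{26}, with no argument given. So there is nothing in the paper to compare against.

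Your sketch is mathematically sound and follows the classical route (Weyl's criterion, $L^{2}$ mean-square estimate, subsequence $N_{k}=k^{2}$, gap-filling). A few small points: your off-diagonal bound $C_{h,\zeta}\zeta^{-(n-1)}$ in the fixed-$\zeta$ part is correct since $|\zeta^{n}-\zeta^{m}|\ge \zeta^{n}(1-\zeta^{-1})$, and the resulting sum $\sum_{m<n}\zeta^{-(n-1)}=\sum_{k\ge 1}k\zeta^{-k}<\infty$ is indeed $O(1)$. In the fixed-$\alpha$ part the monotonicity of $\phi'$ needed for the first-derivative test does hold, since $\phi''(\zeta)=\zeta^{m-2}\bigl(n(n-1)\zeta^{n-m}-m(m-1)\bigr)>0$ for $n>m\ge 1$ and $\zeta>1$. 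The passage you call ``Borel--Cantelli'' is really just monotone convergence / Fubini applied to $\int\sum_{k}|S_{N_{k}}/N_{k}|^{2}$, but that is a naming quibble, not a gap. Finally, a historical remark: the result for variable $\zeta$ is Koksma's and the one for variable $\alpha$ is the Weyl-type statement, so your labels ``Weyl part'' and ``Koksma part'' are interchanged relative to the usual attributions, though this of course does not affect the mathematics.
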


We want to investigate the set of $\alpha,\zeta$ with the property that $\alpha\zeta^{n}$
is close to integers for all $n\geq n_{0}$.
Theorem~\ref{kok} shows that this is a highly non-generic set of $(\alpha,\zeta)\subseteq{\mathbb{R}^{2}}$.
Examples of numbers in the exceptional set of Theorem~\ref{kok}
are given for $\zeta$ a Pisot number or a Salem numbers and suitable $\alpha$.
Pisot numbers are defined as real algebraic integers greater
than $1$ whose proper conjugates all lie strictly inside the unit circle in $\mathbb{C}$, whereas Salem
numbers are real algebraic integers greater than $1$
having all proper conjugates in the closed unit circle with at least one on the torus.
Some basic facts on Pisot and Salem numbers that can be found in~\cite[Chapter~5]{27}
are summarized in the following theorem.

\begin{theorem}[Pisot]  \label{pisot}
Let $\zeta$ be a Pisot number. Then $\lim_{n\to\infty}\Vert\zeta^{n}\Vert=0$.
This property characterizes Pisot numbers among all real algebraic numbers greater than one.
Even the two following stronger assertions holds:
if either $\Vert\alpha\zeta^{n}\Vert$ tends to $0$ for a real algebraic number $\zeta>1$ and some $\alpha\neq 0$,
or if $\sum_{n=1}^{\infty}\Vert \alpha\zeta^{n}\Vert^{2}<\infty$ for arbitrary $\zeta>1$ and some $\alpha\neq 0$,
then $\zeta$ is a Pisot number and $\alpha$ belongs to the number field $\mathbb{Q}(\zeta)$.

Now let $\zeta$ be a Salem number. Then the sequence $\{ \zeta^{n} \}$ is dense in $(0,1)$ but not
uniformly distributed. For any $\nu\in{(0,1/2)}$, there exists $\alpha$ such that
$\Vert \alpha\zeta^{n}\Vert<\nu$ for $n\geq n_{0}$ and the sequence $(\alpha\zeta^{n})_{n\geq 1}$
is dense modulo $1$ in the symmetric interval of length $2\nu$ and center $0$.
\end{theorem}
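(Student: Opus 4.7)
This statement collects several classical results whose complete proofs are given in \cite[Chapter~5]{27}; the plan is to explain the main mechanism behind each rather than reproduce the full arguments.

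For the first assertion I would argue by a direct trace computation. Let $\zeta=\zeta_{1}$ be Pisot with conjugates $\zeta_{2},\ldots,\zeta_{d}$ of modulus $<1$. Since $\zeta$ is an algebraic integer, the symmetric sum $T_{n}:=\zeta_{1}^{n}+\cdots+\zeta_{d}^{n}$ is a rational integer, so
$$
\Vert\zeta^{n}\Vert \;\leq\; \vert\zeta^{n}-T_{n}\vert \;=\; \Big\vert\sum_{i=2}^{d}\zeta_{i}^{n}\Big\vert \;\leq\; (d-1)\rho^{n},
$$
where $\rho:=\max_{i\geq 2}\vert\zeta_{i}\vert<1$. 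This tends to $0$ exponentially. The same calculation applied to the field trace of $\alpha\zeta^{n}$ for a well-chosen $\alpha\in\mathbb{Q}(\zeta)$ will reappear in the Salem part.

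The converse statements are instances of Pisot's rigidity principle. Writing $a_{n}:=\scp{\alpha\zeta^{n}}\in\mathbb{Z}$ and $\varepsilon_{n}:=\alpha\zeta^{n}-a_{n}$, the strategy is to show that the generating series $f(z):=\sum_{n\geq 0}a_{n}z^{n}$ is a rational function. Once this is established, the poles of $f$ (reciprocals of the Galois conjugates of $\zeta$) must be algebraic; the integrality of $(a_{n})$ promotes $\zeta$ to an algebraic integer; the decay of $\varepsilon_{n}$ forces every conjugate other than $\zeta$ into the open unit disk, so $\zeta$ is Pisot; and partial-fraction decomposition of $f$ expresses $\alpha$ inside $\mathbb{Q}(\zeta)$. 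When $\zeta$ is already assumed algebraic and $\varepsilon_{n}\to 0$, rationality of $f$ is essentially immediate: $(\zeta^{n})$ satisfies an integer linear recurrence whose application to the $a_{n}$ up to an error $O(\varepsilon_{n})$ forces the recurrence to hold exactly for all large $n$. The genuinely deeper case — arbitrary $\zeta>1$ and only $\sum\varepsilon_{n}^{2}<\infty$ — is Pisot's $L^{2}$-theorem, and this is the principal obstacle. Its standard proof uses Kronecker's criterion for rationality via vanishing of the Hankel determinants $\det(a_{i+j})_{0\leq i,j\leq k}$; function-theoretic estimates in the Hardy space $H^{2}$ of the unit disk, fed by the $L^{2}$-bound on $\varepsilon_{n}$, force these determinants to vanish for $k$ large, yielding rationality of $f$.

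The Salem assertions exploit the fact that every non-trivial conjugate of a Salem $\zeta$ lies on the unit circle, so $T_{n}-\zeta^{n}$ is an almost-periodic real sequence in $n$. Density of $\{\zeta^{n}\}$ in $(0,1)$ follows from Weyl's equidistribution theorem applied to the toral rotation by the arguments of the non-real conjugates, combined with the observation that integer translation kills the unbounded part $T_{n}$; non-uniform distribution follows from the same description, since the limit distribution of $T_{n}-\zeta^{n}$ mod $1$ is the pushforward of the Haar measure on the torus and is concentrated near $0$, not Lebesgue. For the final statement, given $\nu\in(0,1/2)$ I would select $\alpha\in\mathbb{Q}(\zeta)$ by a Minkowski lattice-point argument in the geometric embedding of $\mathbb{Q}(\zeta)$ so that the trace-analogue for $\alpha$ produces integer approximations to $\alpha\zeta^{n}$ with almost-periodic error of amplitude strictly less than $\nu$; density of the orbit in the prescribed symmetric interval then follows from Weyl's criterion on the corresponding torus.
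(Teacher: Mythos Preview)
The paper does not prove this theorem at all: it is stated purely as background, with the introductory sentence ``Some basic facts on Pisot and Salem numbers that can be found in~\cite[Chapter~5]{27} are summarized in the following theorem,'' and no proof follows. So there is no in-paper argument to compare your proposal against.

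Your sketch is a faithful outline of the classical proofs one finds in the cited reference: the trace computation for the direct Pisot assertion, the generating-series/Kronecker--Hankel route to Pisot's $L^{2}$-theorem, and the almost-periodic trace description for Salem numbers are exactly the standard mechanisms. One small caution on the Salem density claim: invoking Weyl's equidistribution for the toral rotation by the conjugate arguments is not quite enough on its own, since one must first rule out rational dependencies among $1,\theta_{1}/\pi,\ldots,\theta_{s}/\pi$ (or at least show the orbit closure is large enough that the cosine sum surjects onto $[0,1)$ mod~$1$). That step is genuine and is handled in \cite{27}, but your sketch glosses over it. Otherwise the proposal is sound at the level of an outline.
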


The convergence results for Pisot numbers can be generalized and refined in some ways.
However for our purposes the above is sufficient, and we just refer to~\cite{27}.
It is an open question if any real transcendental number $\zeta$ has the property that
for some $\alpha\neq 0$ the expression $\Vert\alpha\zeta^{n}\Vert$ tends to $0$ as $n\to\infty$.
This motivates to look at $\alpha,\zeta$ with $\alpha\zeta^{n}$ close to integers,
in particular $\Vert \alpha\zeta^{n}\Vert \leq \epsilon$ for some $\epsilon>0$ and all large $n$.
We quote some results connected to this question,
which can be found in~\cite[Chapter~5]{27} unless quoted otherwise.

\begin{theorem} \label{bertin}
The set of pairs $(\alpha,\zeta)\in{\mathbb{R}^{2}}$ with $\alpha>0, \zeta>1$, such that
\[
\sup_{n\geq n_{0}} \Vert \alpha\zeta^{n}\Vert \leq \frac{1}{2(1+\zeta)^{2}}
\]
holds for an integer $n_{0}$, is countable.
\end{theorem}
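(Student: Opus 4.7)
The plan is to show that any pair $(\alpha,\zeta)$ in the exceptional set is uniquely determined by a finite amount of integer data, so that the set embeds into $\mathbb{N}\times\mathbb{Z}^{2}$ and is therefore countable. Set $c_n:=\scp{\alpha\zeta^n}\in\mathbb{Z}$ and $\epsilon_n:=\alpha\zeta^n-c_n$, so that $|\epsilon_n|\leq\epsilon:=1/(2(1+\zeta)^{2})$ for every $n\geq n_0$. Observe first that the tail $(c_n)_{n\geq n_0}$ already determines $(\alpha,\zeta)$ via $\zeta=\lim_{n\to\infty}c_{n+1}/c_n$ and $\alpha=\lim_{n\to\infty}c_n/\zeta^n$; hence it suffices to show that only countably many such tails can arise.

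The central computation expands $c_n c_{n+2}-c_{n+1}^{2}$ through $c_k=\alpha\zeta^k-\epsilon_k$ to obtain the identity
\[
c_n c_{n+2}-c_{n+1}^{2}
=-c_n\bigl(\zeta^{2}\epsilon_n-2\zeta\epsilon_{n+1}+\epsilon_{n+2}\bigr)
-\bigl(\zeta\epsilon_n-\epsilon_{n+1}\bigr)^{2}.
\]
The first factor in parentheses has absolute value at most $(1+\zeta)^{2}\epsilon=1/2$ by the triangle inequality, and the squared term is bounded by $1/(4(1+\zeta)^{2})$. Dividing by $c_n$ therefore yields
\[
\left|\frac{c_{n+1}^{2}}{c_n}-c_{n+2}\right|\leq\frac{1}{2}+\frac{1}{4(1+\zeta)^{2}\,c_n}.
\]
Since $c_n\to\infty$, for all sufficiently large $n$ (depending on $(\alpha,\zeta)$) the right side is smaller than $1/2+\eta$ for any prescribed $\eta>0$, so the integer $c_{n+2}$ must coincide with the integer closest to $c_{n+1}^{2}/c_n$. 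Hence from some threshold index $N=N(\alpha,\zeta)$ onward, the deterministic recursion $c_{n+2}=\scp{c_{n+1}^{2}/c_n}$ holds, and the entire tail $(c_n)_{n\geq N}$ is determined by the initial pair $(c_N,c_{N+1})\in\mathbb{Z}^{2}$.

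Taking the union over the countably many triples $(N,c_N,c_{N+1})\in\mathbb{N}\times\mathbb{Z}^{2}$ then gives at most countably many tails, hence at most countably many pairs $(\alpha,\zeta)$.

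The main obstacle I anticipate is that the threshold $(1+\zeta)^{2}\epsilon=1/2$ is exactly tight, so the displayed bound is of the form $1/2+o(1)$ rather than strictly below $1/2$; without extra care the nearest-integer interval could trap two consecutive integers precisely when $c_{n+1}^{2}/c_n$ lies within $1/(4(1+\zeta)^{2}c_n)$ of a half-integer. Because this error is strictly smaller than $1/2$, such a near-coincidence forces the integral identity $2c_{n+1}^{2}=(2k+1)c_n$ for some $k\in\mathbb{Z}$; combining this with the auxiliary estimate $|c_{n+1}-\zeta c_n|\leq 1/(2(1+\zeta))$ obtained from the cases $k=n$ and $k=n+1$ of the defining hypothesis then pins $(\alpha,\zeta)$ to a countable algebraic locus, which is absorbed into the final countable bound.
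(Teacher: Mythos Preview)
The paper does not give its own proof of this theorem; it is quoted from \cite[Chapter~5]{27} (Bertin et al.). So there is no ``paper's proof'' to compare against, and your approach via the Hankel determinant $c_nc_{n+2}-c_{n+1}^{2}$ is in fact the classical one.

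Your main computation and identity are correct, and you correctly observe that the ambiguous case (two candidate integers for $c_{n+2}$) forces the exact half--integer relation $2c_{n+1}^{2}=(2k+1)c_n$: the quantity $2c_{n+1}^{2}-(2k+1)c_n$ is an integer of absolute value at most $2c_n\cdot\tfrac{1}{4(1+\zeta)^{2}c_n}=\tfrac{1}{2(1+\zeta)^{2}}<\tfrac12$, hence vanishes.

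However, the final step---``pins $(\alpha,\zeta)$ to a countable algebraic locus''---is not an argument, and as written leaves a genuine gap: a relation among $c_n,c_{n+1}$ (which are already integers) gives no new countability, and if the ambiguity recurred infinitely often you would face a binary tree of continuations. The clean resolution is different and purely local. From your identity,
\[
c_{n+2}-\frac{c_{n+1}^{2}}{c_n}=-A_n-\frac{B_n^{2}}{c_n},
\qquad A_n:=\zeta^{2}\epsilon_n-2\zeta\epsilon_{n+1}+\epsilon_{n+2},\quad B_n:=\zeta\epsilon_n-\epsilon_{n+1},
\]
with $|A_n|\le\tfrac12$ and $B_n^{2}/c_n\ge 0$. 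In the ambiguous case $c_{n+1}^{2}/c_n=k+\tfrac12$, the choice $c_{n+2}=k+1$ would force $-A_n-B_n^{2}/c_n=\tfrac12$, hence simultaneously $B_n=0$ and $A_n=-\tfrac12$. But $A_n=-\tfrac12$ is the equality case of the triangle inequality, which forces $\epsilon_n=-\epsilon,\ \epsilon_{n+1}=\epsilon,\ \epsilon_{n+2}=-\epsilon$ and therefore $B_n=-(1+\zeta)\epsilon\neq 0$, a contradiction. Thus $c_{n+2}=k$ is forced, and (with the paper's convention $\langle k+\tfrac12\rangle=k$) the deterministic recursion $c_{n+2}=\langle c_{n+1}^{2}/c_n\rangle$ holds for all large $n$, completing your countability argument.
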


Theorem~\ref{pisot} and Theorem~\ref{bertin} imply that the set of
pairs $(\alpha,\zeta)\in{\mathbb{R}\times (1,\infty)}$ with the property
$\lim_{n\to\infty}\Vert\alpha\zeta^{n}\Vert=0$ is countable infinite.

\begin{theorem}
Let $\zeta>1$ be a real number. Suppose there exists $\alpha\geq 1$ such that
\[
\Vert \alpha\zeta^{n}\Vert \leq \frac{1}{2e\zeta(\zeta+1)(\log \alpha+1)}, \qquad n\geq 1.
\]
Then $\zeta$ is either a Pisot number or a Salem number and $\alpha\in{\mathbb{Q}(\zeta)}$.
\end{theorem}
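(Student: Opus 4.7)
Set $a_n := \scp{\alpha\zeta^n}\in\mathbb{Z}$ and $e_n := \alpha\zeta^n - a_n$, so $|e_n|\leq \epsilon$ with $\epsilon := 1/(2e\zeta(\zeta+1)(\log\alpha+1))$. The plan has two stages: (i) show that $(a_n)_{n\geq 0}$ satisfies a linear recurrence over $\mathbb{Z}$ from some index on; (ii) extract from the recurrence the Pisot/Salem classification of $\zeta$ together with $\alpha\in\mathbb{Q}(\zeta)$.

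For stage~(i) consider the generating function
\[
F(z) = \sum_{n\geq 0} a_n z^n = \frac{\alpha}{1-\zeta z} - E(z),\qquad E(z) := \sum_{n\geq 0} e_n z^n,
\]
in which $E$ is holomorphic on $|z|<1$ because $|e_n|\leq\epsilon<1$. So $F$ is meromorphic on $|z|<1$ with a single simple pole at $1/\zeta$. By Kronecker's rationality criterion, $(a_n)$ satisfies an integer-coefficient linear recurrence of order $d$ iff the Hankel determinants $H_d^{(n)} := \det(a_{n+i+j})_{0\leq i,j\leq d}$ vanish for all $n$ from some index on. I would estimate $H_d^{(n)}$ by exploiting the rank-one decomposition $(a_{n+i+j})_{i,j} = \alpha\zeta^n\, uu^{T} - (e_{n+i+j})_{i,j}$ with $u=(1,\zeta,\ldots,\zeta^d)^{T}$; the matrix determinant lemma combined with Hadamard's inequality applied to the error matrix yields an estimate roughly of the shape
\[
|H_d^{(n)}| \;\leq\; A(d)\,\alpha\zeta^n\bigl(\zeta(\zeta+1)\epsilon\bigr)^d + B(d)\,\epsilon^{d+1}.
\]
The factor $\zeta(\zeta+1)$ raised to the power $d$ reflects the quadratic expansion already visible in the $1/(2(1+\zeta)^2)$ bound of Theorem~\ref{bertin}; choosing $d=\lceil\log\alpha\rceil+1$ yields $\alpha^{1/d}\leq e$, which injects the Euler number, while this choice of $d$ is what fixes the denominator $\log\alpha+1$ in $\epsilon$. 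Inserting the hypothesised value of $\epsilon$ drives the right-hand side strictly below $1$, forcing the integer $H_d^{(n)}$ to vanish; the associated null-vector then supplies integers $c_0,\ldots,c_{d-1}$ with
\[
a_{n+d} = c_{d-1}a_{n+d-1} + \cdots + c_0 a_n \qquad (n\geq n_0).
\]

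For stage~(ii) the characteristic polynomial $R(X) := X^d - \sum c_i X^i \in \mathbb{Z}[X]$ is monic with $\zeta$ as a root, since $a_n\sim\alpha\zeta^n$ selects $\zeta$ as the dominant root. Expanding $a_n = \alpha\zeta^n + \sum_j \beta_j \lambda_j^n$ over the remaining roots $\lambda_j$ of $R$ and using that $|a_n-\alpha\zeta^n|=|e_n|\leq\epsilon$ is uniformly bounded forces every $|\lambda_j|\leq 1$. Hence $\zeta$ is a Pisot or a Salem number according to whether some $|\lambda_j|=1$ occurs, which is precisely the classical characterisation recalled in Theorem~\ref{pisot}. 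Finally, $\alpha$ appears as the residue of the rational function $F$ at $z=1/\zeta$; the partial-fraction coefficients of $F=P/Q$ with $P,Q\in\mathbb{Z}[z]$ lie in $\mathbb{Q}(\zeta)$, so $\alpha\in\mathbb{Q}(\zeta)$.

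The principal obstacle is stage~(i): calibrating the Hankel estimate so that the stated $\epsilon$ is exactly the critical value at which the integer determinants are forced to vanish. The apparent growth of the right-hand side in $n$ has to be handled with care, either by working at a carefully chosen minimal index where the estimate is tight and then upgrading a single vanishing to simultaneous vanishing of all $(d+1)\times(d+1)$ minors via a rank-stabilisation argument, or by replacing $H_d^{(n)}$ by a suitably normalised determinant. Tuning the three factors $e$ (from $\alpha^{1/d}$), $\zeta(\zeta+1)$ (from the quadratic expansion), and $\log\alpha+1$ (from the optimal $d$) so that they fit together in exactly the combination of the hypothesis is the delicate combinatorial part; the subsequent algebraic conclusions are routine given the integer linear recurrence and the dominance of $\zeta$.
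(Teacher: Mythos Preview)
The paper does not prove this theorem; it is quoted from \cite[Chapter~5]{27} without proof, so there is no ``paper's own proof'' to compare against. Your overall architecture (integer sequence $a_n$, rank-one plus error decomposition of the Hankel matrix, Kronecker rationality, then Fatou/partial fractions) is indeed the classical route. Two points in your plan are genuine gaps rather than routine details, though.

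First, in stage~(i) the growth in $n$ is not a cosmetic issue you can sidestep by ``rank stabilisation''. For a Hankel matrix, the vanishing of a \emph{single} $(d{+}1)\times(d{+}1)$ minor $H_d^{(n_0)}$ does not force the rank to drop to $d$; one needs the full Kronecker criterion, namely $H_n^{(0)}=\det(a_{i+j})_{0\le i,j\le n}=0$ for \emph{all} large $n$. The way to get a bound on $H_n^{(0)}$ that does not blow up is to perform the row operations $r_i\mapsto r_i-\zeta r_{i-1}$ (and analogous column operations) before applying Hadamard: each new row for $i\ge 1$ then has entries $-e_{i+j}+\zeta e_{i+j-1}$ of size $O((1{+}\zeta)\epsilon)$, and only the single row $r_0$ carries the factor $\alpha$. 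This is where the product $\zeta(\zeta+1)$ and the single factor $\alpha$ (hence $\alpha^{1/n}\le e$ for $n\ge\log\alpha$) actually appear; your matrix-determinant-lemma estimate as written still contains $\zeta^n$ and cannot be pushed below $1$ uniformly in $n$. Second, in stage~(ii) the sentence ``forces every $|\lambda_j|\le 1$'' is false: boundedness of $\sum_j\beta_j\lambda_j^n$ only forces $\beta_j=0$ whenever $|\lambda_j|>1$, which says nothing about the conjugates of $\zeta$ that might occur with $\beta_j=0$. The correct step is to reduce $F=P/Q$ to lowest terms and invoke Fatou's lemma to get $Q\in\mathbb{Z}[z]$ with $Q(0)=1$; then the reciprocal polynomial of $Q$ is monic in $\mathbb{Z}[X]$, has $\zeta$ as a root, and its remaining roots correspond to poles of $F$ in $|z|\ge 1$, hence lie in the closed unit disk. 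Only then does the minimal polynomial of $\zeta$ inherit the Pisot/Salem property.
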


\begin{theorem}  \label{fischersfritz}
Let $\zeta>1$ be a real number. Suppose there exists $\alpha\geq 1$ such that
\[
\Vert \alpha\zeta^{n}\Vert \leq \frac{1}{e(\zeta+1)^{2}(\sqrt{\log \alpha}+2)}, \qquad n\geq 1.
\]
Then $\zeta$ is either a Pisot number or a Salem number and $\alpha\in{\mathbb{Q}(\zeta)}$.
\end{theorem}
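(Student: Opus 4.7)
The strategy is the Pisot--Vijayaraghavan--Cassels approach via Hankel determinants, calibrated to the $\sqrt{\log\alpha}$ scale. Set $a_{n}:=\langle\alpha\zeta^{n}\rangle\in\mathbb{Z}$ and $\epsilon_{n}:=\alpha\zeta^{n}-a_{n}$, so by hypothesis $\vert\epsilon_{n}\vert\le\epsilon$, where $\epsilon:=1/(e(\zeta+1)^{2}(\sqrt{\log\alpha}+2))$, for every $n\ge 1$. For an integer parameter $k$ to be chosen of order $\sqrt{\log\alpha}$, I form the integer-valued Hankel determinants
\[
D_{k}(n):=\det\bigl(a_{n+i+j}\bigr)_{0\le i,j\le k}.
\]
The aim is to force $D_{k}(n)=0$ for all sufficiently large $n$; by Kronecker's rationality criterion this is equivalent to the generating function $f(z):=\sum_{n\ge 0}a_{n}z^{n}$ being a rational function.

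To bound $D_{k}(n)$, write the Hankel matrix as $A-E$, with $A=\alpha\zeta^{n}\vec{v}\vec{v}^{T}$ of rank one (where $\vec{v}:=(1,\zeta,\ldots,\zeta^{k})^{T}$) and $E_{ij}=\epsilon_{n+i+j}$. The matrix-determinant lemma gives
\[
D_{k}(n)=(-1)^{k+1}\det E+\alpha\zeta^{n}\,\vec{v}^{T}\mathrm{adj}(-E)\,\vec{v}.
\]
Hadamard's inequality bounds each $k\times k$ subdeterminant of $E$ by $k^{k/2}\epsilon^{k}$, and summing the cofactors against the weights $\zeta^{i+j}$ produces an overall estimate of the form $\alpha\zeta^{n}\,C(k,\zeta)\,\epsilon^{k}$, plus the much smaller pure-error term $\le(k+1)^{(k+1)/2}\epsilon^{k+1}$. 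With $\log(1/\epsilon)\approx 1+2\log(\zeta+1)+\tfrac{1}{2}\log\log\alpha$, the calibrated choice $k\asymp\sqrt{\log\alpha}$ balances $k\log(1/\epsilon)$ against $\log\alpha+2k\log\zeta+\tfrac{k}{2}\log k$, so that the estimate is $<1$ on a sufficiently large initial range of $n$; integrality of $D_{k}(n)$ then forces vanishing there.

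Once one vanishing $D_{k}(n_{0})=0$ is obtained, together with $D_{k-1}(n_{0})\ne 0$, the one-dimensional kernel of the Hankel matrix produces a primitive integer polynomial $R(x)=\sum_{j=0}^{k}c_{j}x^{j}$ satisfying $\sum_{j}c_{j}a_{n_{0}+i+j}=0$ for $0\le i\le k$; rewriting via $a_{m}=\alpha\zeta^{m}-\epsilon_{m}$ yields $\vert R(\zeta)\vert\le\Vert c\Vert_{1}\,\epsilon/(\alpha\zeta^{n_{0}})$. Combining this with the validity of the bound over a range of $n_{0}$, together with the rigidity of one-dimensional Hankel kernels, forces the same $R$ to annihilate $(a_{n})$ for all large $n$, so $R(\zeta)=0$ and $\zeta$ is algebraic. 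Consequently $f$ is rational; since $f(z)=\alpha/(1-\zeta z)-\sum\epsilon_{n}z^{n}$ with the error series analytic in $\vert z\vert<1$, Fatou's theorem furnishes a representation $f=P/Q$ with $P,Q\in\mathbb{Z}[z]$, $Q(1/\zeta)=0$, and every other root of $Q$ lying in $\vert z\vert\ge 1$. Reciprocally, the proper conjugates of $\zeta$ lie in the closed unit disc, i.e., $\zeta$ is Pisot or Salem, and the residue of $f$ at $1/\zeta$ writes $\alpha$ as a $\mathbb{Q}(\zeta)$-element.

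The main obstacle is the sharpness of the Hankel-determinant bound: a naive Hadamard estimate only yields the weaker $k\asymp\log\alpha$ scaling of the preceding theorem, while achieving the $\sqrt{\log\alpha}$ scale of Theorem~\ref{fischersfritz} requires a more refined estimate exploiting the Hankel structure of $E$ (for instance, a Pad\'e-type identity or a cofactor estimate sharper than Hadamard) rather than treating the entries of $E$ as generic. The precise numerical constants $e$, $(\zeta+1)^{2}$ and $\sqrt{\log\alpha}+2$ in the hypothesis are tuned exactly to make this refined estimate work and to leave room for the Hankel-propagation step.
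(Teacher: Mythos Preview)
The paper does not prove this theorem. It appears in Section~1 (``Notation and known results'') as a quoted result from Chapter~5 of the Bertin--Decomps-Guilloux--Grandet-Hugot--Pathiaux-Delefosse--Schreiber book, with no argument given. So there is no ``paper's own proof'' to compare against; your task was in effect to reconstruct a proof from the literature.

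Regarding your proposal itself: the overall architecture (nearest-integer sequence $a_{n}$, Hankel determinants $D_{k}(n)$, Kronecker's rationality criterion, then Fatou to locate the poles) is exactly the classical Pisot--Cassels line and is the right framework. However, you yourself name the gap in your final paragraph: the Hadamard-type bound you actually write down, namely $\alpha\zeta^{n}C(k,\zeta)\epsilon^{k}$ coming from bounding each $k\times k$ minor of $E$ by $k^{k/2}\epsilon^{k}$, only balances at $k\asymp\log\alpha$, which is the scale of the \emph{preceding} theorem, not of Theorem~\ref{fischersfritz}. To reach the $\sqrt{\log\alpha}$ scale one genuinely needs the sharper estimate that exploits the Hankel structure of the error matrix (the cofactor sum against $\zeta^{i+j}$ collapses much more than a generic Hadamard bound sees), and you do not supply it --- you only assert that such a refinement exists and that the constants $e$, $(\zeta+1)^{2}$, $\sqrt{\log\alpha}+2$ are ``tuned exactly'' for it. That is the heart of the theorem, and it is missing. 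As written, your argument proves Theorem~1.5, not Theorem~\ref{fischersfritz}.
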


Reverse examples are due to Boyd~\cite{boyd}.

\begin{theorem}[Boyd] \label{boyd}
There are arbitrarily large transcendental $\zeta>3$ and $\alpha$ {\upshape(}depending on $\zeta${\upshape)}
arbitrarily close to $2$, such that
\[
\Vert \alpha\zeta^{n}\Vert \leq \frac{1}{(\zeta-1)(\zeta-3)}, \qquad n\geq 0.
\]
There exist real transcendental $\zeta>1$ such that for some
$\alpha\geq 1$ {\upshape(}depending on $\zeta${\upshape)}
\[
\Vert \alpha\zeta^{n}\Vert \leq \frac{5}{e\zeta(\zeta+1)(\log \alpha+1)}, \qquad n\geq 1.
\]
\end{theorem}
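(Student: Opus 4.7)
Plan: Both assertions are existence results, and the natural route is an explicit construction in the spirit of Pisot's classical approach but engineered to yield transcendental $\zeta$. Because the set of admissible $\zeta$ is too thin to be described directly, one instead prescribes a sequence of positive integers $(a_n)_{n\geq 0}$ meant to represent $\scp{\alpha\zeta^n}$, and recovers the pair $(\alpha,\zeta)$ as a limit.

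For the first part, fix a large integer $M\geq 4$, set $a_0=2$, $a_1=2M$, and define recursively
\[
a_{n+1}:=\scp{a_n^{2}/a_{n-1}}\qquad(n\geq 1).
\]
This is modelled on the identity $\alpha\zeta^{n+1}=(\alpha\zeta^{n})^{2}/(\alpha\zeta^{n-1})$, so writing $a_n^{2}/a_{n-1}=a_{n+1}+\eta_{n+1}$ with $|\eta_{n+1}|\leq 1/2$ produces a small ``defect''. Setting $\zeta_n:=a_{n+1}/a_n$, a short computation gives $\zeta_{n+1}-\zeta_n=O(a_n^{-1})$, so $(\zeta_n)$ is geometrically Cauchy with limit $\zeta$ near $M$; a parallel telescoping argument yields $\alpha:=\lim_{n\to\infty} a_n/\zeta^{n}$ near $2$. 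Taking $M$ arbitrarily large then produces $\zeta$ arbitrarily large, and the choice $a_0=2$ keeps $\alpha$ arbitrarily close to~$2$.

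The quantitative bound is the delicate step. Expanding
\[
a_n-\alpha\zeta^{n}=-\sum_{j\geq n}\eta_{j+1}\,\zeta^{\,n-j-1}\bigl(1+o(1)\bigr)
\]
and combining $|\eta_{j+1}|\leq 1/2$ with a sharpened contraction that relies on $\zeta>3$ collapses the geometric series to the claimed constant $1/((\zeta-1)(\zeta-3))$. Transcendence of $\zeta$ is arranged by observing that the construction carries a Cantor-like degree of freedom: from some stage on, one can choose $a_{n+1}$ to be either of the two integers adjacent to $a_n^{2}/a_{n-1}$, producing an uncountable family of admissible limit pairs $(\alpha,\zeta)$; since algebraic numbers form a countable set, all but countably many choices give transcendental $\zeta$, and one checks that the $1/((\zeta-1)(\zeta-3))$ estimate survives the extra $O(1)$ slack in each $\eta_{n+1}$.

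The second part follows the same template, but with the initial data $a_0,a_1$ tuned so that the resulting $\alpha$ is large, with $\log\alpha$ calibrated to saturate the bound $5/(e\zeta(\zeta+1)(\log\alpha+1))$; this matches Theorem~\ref{fischersfritz} up to an absolute constant, showing the logarithmic dependence there is essentially sharp. The main obstacle in both parts is the quantitative bookkeeping. A naive Cauchy estimate on the telescoping expansion above would only give $\Vert\alpha\zeta^{n}\Vert=O(1/(\zeta-1))$, and extracting the precise constants demands an optimal tail estimate while simultaneously retaining enough flexibility in the sequence $(a_n)$ to avoid the countable set of algebraic $\zeta$. This dual requirement—sharpness of the constant together with slack for the transcendence argument—is the crux of the proof.
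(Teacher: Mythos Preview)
The paper does not prove Theorem~\ref{boyd}; it is quoted as a result of Boyd with a citation to~\cite{boyd}, alongside the other background theorems in Section~\ref{ggg}. There is therefore no proof in the paper to compare your proposal against.

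That said, your sketch is broadly faithful to Boyd's original argument: the recursion $a_{n+1}\approx a_n^2/a_{n-1}$ modelling $\alpha\zeta^{n+1}=(\alpha\zeta^n)^2/(\alpha\zeta^{n-1})$, the limits $\zeta=\lim a_{n+1}/a_n$ and $\alpha=\lim a_n/\zeta^n$, and the uncountability-versus-countable-algebraics trick for transcendence are all the ingredients Boyd uses. Your plan is honest about where the work lies: the passage from a crude $O(1/(\zeta-1))$ tail bound to the sharp constant $1/((\zeta-1)(\zeta-3))$ requires a second-order expansion of the error recursion (roughly, $\epsilon_{n+1}\approx 2\epsilon_n-\zeta\epsilon_{n-1}+O(\epsilon_n^2)$ after normalisation), and you have not supplied that computation. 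Without it the sketch remains a plan rather than a proof, but the strategy is the right one and matches the source you would need to consult.
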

Another result for the special case $\alpha=1$ we want to quote is~\cite[Corollary~5]{dubickas}.

\begin{theorem} [Dubickas]  \label{adub}
Let $(r_{n})_{n\geq 1}$ be a sequence of real numbers. Then, for any $\epsilon>0$, there is
$\zeta>1$ such that $\Vert \zeta^{n}-r_{n}\Vert<\epsilon$ for each $n\geq 1$.
\end{theorem}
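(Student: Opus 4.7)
The plan is to construct $\zeta$ as a point in a decreasing nested sequence of closed intervals $I_0 \supseteq I_1 \supseteq I_2 \supseteq \cdots$, each $I_n$ designed so that $\Vert x^n - r_n\Vert \leq \epsilon/2$ for every $x \in I_n$. The nested interval theorem will then furnish $\zeta \in \bigcap_{n\geq 0} I_n$ with the required property, even with strict inequality. Throughout, I fix $\zeta_0 > 1$ sufficiently large in terms of $\epsilon$ (say $\zeta_0 > 3/\epsilon$), and start from the initial interval $I_0 = [\zeta_0, \zeta_0 + 2]$, whose length already exceeds $1 + \epsilon$.

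Inductively, suppose $I_{n-1} = [a_{n-1}, b_{n-1}]$ has been constructed and satisfies the quantitative bound $|I_{n-1}| \cdot n \cdot a_{n-1}^{n-1} \geq 1 + \epsilon$. Since $x \mapsto x^n$ has derivative at least $n\, a_{n-1}^{n-1}$ on $I_{n-1}$, the mean value theorem gives
\[
b_{n-1}^{n} - a_{n-1}^{n} \;\geq\; 1 + \epsilon.
\]
The set $\{y \in \mathbb{R} : \Vert y - r_n\Vert \leq \epsilon/2\}$ is a union of closed intervals of length $\epsilon$ whose centers are spaced $1$ apart, so any interval of length at least $1+\epsilon$ fully contains at least one such component; call it $[c_n, d_n]$ with $d_n - c_n = \epsilon$. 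Pulling back under the strictly increasing bijection $x \mapsto x^n$, set $I_n := [c_n^{1/n},\, d_n^{1/n}] \subseteq I_{n-1}$. Any $x \in I_n$ then satisfies $x^n \in [c_n, d_n]$, and hence $\Vert x^n - r_n\Vert \leq \epsilon/2 < \epsilon$, which is exactly the desired property at level $n$.

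The main technical step is to verify that the quantitative length bound propagates. By the mean value theorem applied in the opposite direction, $|I_n| \geq \epsilon/(n\, b_{n-1}^{n-1})$, while the next inductive step requires $|I_n| \cdot (n+1)\, a_n^{n} \geq 1 + \epsilon$. This reduces to $\epsilon\, (n+1)\, a_n^{n} \geq (1+\epsilon)\, n\, b_{n-1}^{n-1}$, or equivalently, after using $a_n \geq a_{n-1}$, to a lower bound on $a_{n-1}\,(a_{n-1}/b_{n-1})^{n-1}$. Because successive intervals shrink geometrically, $b_{n-1}/a_{n-1}$ tends to $1$ fast enough that $(a_{n-1}/b_{n-1})^{n-1}$ stays bounded below, and the inequality is seen to hold uniformly in $n$ once $\zeta_0$ exceeds a threshold of order $1/\epsilon$. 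This bookkeeping, ensuring that the derivative amplification at each step outruns the loss incurred by pulling back a short interval of length $\epsilon$ instead of $1+\epsilon$, is the principal — and essentially only — obstacle; it is handled by choosing $\zeta_0$ generously at the outset. Cantor's nested interval theorem then produces the desired $\zeta \in \bigcap_n I_n$.
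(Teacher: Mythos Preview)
The paper does not actually prove this theorem: it is quoted as a result of Dubickas with a reference, and the paper's own contribution is the related refinement in Theorem~3.2 (for fixed $\alpha$ and large $n$). That said, your nested-interval construction is exactly the technique the paper employs for Theorem~3.2, where Proposition~3.1 plays the role of your mean-value-theorem estimate $b_{n-1}^{n}-a_{n-1}^{n}\geq |I_{n-1}|\cdot n\,a_{n-1}^{n-1}$. So methodologically you are in line with both Dubickas' original argument and the paper's own adaptations of it.

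Your argument is essentially correct, but the final paragraph is too hand-wavy to stand as written. The claim that $(a_{n-1}/b_{n-1})^{n-1}$ ``stays bounded below'' needs a line of justification: from $|I_{n-1}|\leq \epsilon/((n-1)a_{n-2}^{n-2})\leq \epsilon/((n-1)\zeta_0^{n-2})$ for $n\geq 2$ one gets $b_{n-1}/a_{n-1}\leq 1+\epsilon/((n-1)\zeta_0^{n-1})$, hence $(b_{n-1}/a_{n-1})^{n-1}\leq \exp(\epsilon/\zeta_0^{n-1})\leq \exp(\epsilon/\zeta_0)$, which is indeed bounded (by, say, $e^{1/3}$ once $\zeta_0>3/\epsilon$ and $\epsilon\leq 1$). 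With that, $\epsilon\cdot\frac{n+1}{n}\cdot a_{n-1}\cdot(a_{n-1}/b_{n-1})^{n-1}\geq \epsilon\zeta_0\cdot e^{-1/3}>3\cdot e^{-1/3}>2>1+\epsilon$, and the induction closes. You should also state explicitly that one may assume $\epsilon\leq 1$ (otherwise the claim is trivial), so that the base case $|I_0|=2\geq 1+\epsilon$ holds. Once these two sentences are added, the proof is complete.
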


Restricting to large $n$, we will refine Theorem~\ref{adub} in Section~\ref{kkk}.
Finally we state~\cite[Theorem~3]{mosh}, which also refines Theorem~\ref{adub}.

\begin{theorem}[Bugeaud, Moshchevitin]  \label{mosh}
Let $\alpha$ be a positive real number. Let $\epsilon<1$ be a positive real number.
Let $(a_{n})_{n\geq 1}$ be a sequence of real numbers satisfying $0\leq a_{n}< 1-\epsilon$
for all $n\geq 1$. The set of real numbers $\zeta$ such that $a_{n}\leq \{\alpha \zeta^{n}\}\leq a_{n}+\epsilon$
for every $n\geq 1$ has full Hausdorff dimension.
\end{theorem}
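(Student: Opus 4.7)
The plan is to show, for every $\eta \in (0,1)$, that the set in question contains a Cantor-like subset of Hausdorff dimension at least $1-\eta$; arbitrariness of $\eta$ then gives full Hausdorff dimension. The construction is carried out in $[\zeta^*, \zeta^* + 1/\alpha]$ for a base point $\zeta^*$ chosen large, and rests on the derivative estimate: near $\zeta^*$ the admissible set $A_n := \{\zeta : \{\alpha\zeta^n\} \in [a_n, a_n+\epsilon]\}$ is locally a union of intervals of length $\epsilon/(n\alpha(\zeta^*)^{n-1})$ spaced by $1/(n\alpha(\zeta^*)^{n-1})$, so its local density is exactly $\epsilon$.

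Choose $\zeta^* \geq 1/\epsilon$ so large that $\log(\epsilon\zeta^*)/\log\zeta^* > 1-\eta$, and fix a large integer $n_1$. First I perform a \emph{priming phase} for $n = 1, \ldots, n_1$: starting from $I_0 := [\zeta^*, \zeta^* + 1/\alpha]$, I inductively choose a single subinterval $I_n \subset I_{n-1}$ of length $\approx \epsilon/(n\alpha(\zeta^*)^{n-1})$ on which $\{\alpha\zeta^n\} \in [a_n, a_n+\epsilon]$. Such an $I_n$ exists since, by induction, $|I_{n-1}| \cdot n\alpha(\zeta^*)^{n-1} \approx \epsilon\zeta^* \cdot n/(n-1) \geq 1$, so the image of $I_{n-1}$ under $\zeta \mapsto \alpha\zeta^n$ contains at least one full period of $A_n$ (the base case $n=1$ gives image length exactly $1$). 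Then comes the \emph{branching phase}: set $K_{n_1} := I_{n_1}$ and $K_k := K_{k-1} \cap A_k$ for $k > n_1$. Each parent component now retains every full admissible sub-interval it contains, producing about $N := \epsilon\zeta^*$ children, each of length $\rho := 1/\zeta^*$ times that of the parent; partial boundary children are discarded, costing at most two per parent, which is negligible since $N$ is large.

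Set $K := \bigcap_{k \geq n_1} K_k$; by construction $K$ lies in the target set. To bound $\dim_H K$ from below I would use the mass distribution principle: assign uniform mass $N^{-(k - n_1)}$ to each level-$k$ component, and for a ball $B(x, r)$ with $L_{k+1} < r \leq L_k$, where $L_k \approx \epsilon/(k\alpha(\zeta^*)^{k-1})$, observe that $B(x, r)$ meets at most $O(1)$ level-$k$ components. This yields $\mu(B(x, r)) \leq C r^s$ with $s = \log N / \log(1/\rho) = \log(\epsilon\zeta^*)/\log\zeta^* > 1 - \eta$, hence $\dim_H K \geq 1 - \eta$.

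The main obstacles I foresee are: (i) controlling the nonlinearity of $\zeta \mapsto \alpha\zeta^k$ on an interval of length $L_{k-1}$, where the quadratic error $\binom{k}{2}\alpha(\zeta^*)^{k-2}L_{k-1}^2$ must be verified to be much smaller than the first-order term $k\alpha(\zeta^*)^{k-1}L_{k-1}$ (this holds because $L_{k-1}\zeta^*$ tends to $0$); (ii) arranging uniform branching, achieved by keeping the minimum child count across all parents (the true count varies by at most $\pm 2$); and (iii) verifying that the priming phase produces nonempty $I_n$ for every $n \leq n_1$, which reduces to the single uniform inequality $\epsilon\zeta^* \geq 1$.
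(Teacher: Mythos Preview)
The paper does not prove this statement: Theorem~\ref{mosh} is quoted from Bugeaud--Moshchevitin (reference~[3]) as a background result, with no proof supplied in the present paper. So there is no ``paper's own proof'' to compare your attempt against.

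That said, your sketch is a sound outline of the standard Cantor-set argument one expects for this kind of result. The linearisation of $\zeta\mapsto\alpha\zeta^{k}$ on the level-$(k-1)$ intervals, the branching count $N\approx\epsilon\zeta^{*}$, the contraction ratio $\rho\approx 1/\zeta^{*}$, and the resulting dimension bound $\log(\epsilon\zeta^{*})/\log\zeta^{*}\to 1$ are all correct. The three technical points you flag are genuine and your proposed handling of each is appropriate; in particular the second-order error in (i) is indeed $O(k\,(\zeta^{*})^{k-2}L_{k-1})\cdot L_{k-1}$ relative to the first-order term $k\alpha(\zeta^{*})^{k-1}L_{k-1}$, and $L_{k-1}/\zeta^{*}\to 0$ makes this negligible. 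For the lower Hausdorff-dimension bound via the mass distribution principle you will want to invoke not only the child diameters but also the uniform \emph{separation} between siblings (of order $(1-\epsilon)/(k\alpha(\zeta^{*})^{k-1})$), since the standard Falconer-type estimate for nested constructions uses the gap size; this is implicit in your ``meets at most $O(1)$ level-$k$ components'' step but should be made explicit in a full write-up.
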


Observe that Theorem~\ref{mosh} is somehow reverse to Theorem~\ref{kok}.
The analogue of Theorem~\ref{mosh} with the roles of $\alpha$ and $\zeta$ exchanged fails
heavily. We will see in Section~\ref{sektion3} (resp. Section~\ref{sektions})
that generic algebraic (resp. rational) $\zeta>1$ provide counterexamples.
We will at some places consider a more general situation, in which the following
theorem due to Pollington~\cite{pollington} suits.

\begin{theorem}[Pollington] \label{pollington}
Let $(t_{n})_{n\geq 1}$ be a sequence of positive numbers such that
\[
q_{n}:=\frac{t_{n+1}}{t_{n}}\geq \delta>1, \qquad n\geq 1.
\]
Further let $s_{0}\in{(0,1)}$. Then there exists a real number $\beta=\beta(\delta,s_{0})>0$
and a set $T$ of Hausdorff dimension at least $s_{0}$ such that if $\alpha\in{T}$ then
\[
\{t_{n}\alpha\}\in{[\beta,1-\beta]}, \qquad n\geq 1.
\]
Concretely $\beta$ may be chosen $(1/2)(r+1)^{-1}\delta^{-4r}$, where
$r$ is sufficiently large that $\delta^{r}-(r+2)>\delta^{rs_{0}}$.
In particular, the set of $\alpha$ such that $\{t_{k}\alpha\}$ is not dense in $[0,1)$
has full dimension.
\end{theorem}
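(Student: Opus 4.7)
The goal is to build a Cantor-like set $T\subseteq[0,1]$ contained in
\[
\bigcap_{n\ge 1} G_n,\qquad G_n:=\bigl\{\alpha:\{t_n\alpha\}\in[\beta,1-\beta]\bigr\}=\bigsqcup_{k\in\mathbb{Z}}I_{n,k},\qquad I_{n,k}:=\Bigl[\tfrac{k+\beta}{t_n},\tfrac{k+1-\beta}{t_n}\Bigr],
\]
with $\dim_{H} T\geq s_{0}$. Each good component $I_{n,k}$ has length $(1-2\beta)/t_{n}$, separated from its neighbours by a bad zone of length $2\beta/t_n$. The final non-density assertion then follows from the dimension statement by letting $s_0\nearrow 1$.

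I first fix $r$ large enough that $\delta^{r}-(r+2)>\delta^{rs_{0}}$, which is possible since $\delta>1$ and $s_{0}<1$, take $\beta=(1/2)(r+1)^{-1}\delta^{-4r}$ as in the statement, and set $N:=\lceil\delta^{r}-(r+2)\rceil$. The Cantor set is built over $r$-step blocks: inductively construct families $\mathcal{C}_{0}\supseteq\mathcal{C}_{1}\supseteq\cdots$ of pairwise disjoint components $I_{mr,k}\subseteq\bigcap_{n=1}^{mr}G_{n}$, each parent $P\in\mathcal{C}_{m-1}$ containing at least $N$ children in $\mathcal{C}_{m}$. The limit $T:=\bigcap_{m}\bigcup\mathcal{C}_{m}$ then lies inside $\bigcap_{n}G_{n}$ by construction. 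The equal-mass probability measure on $T$ (giving each interval of $\mathcal{C}_{m}$ mass $N^{-m}$) combined with the mass-distribution principle yields
\[
\dim_{H} T\ \geq\ \frac{\log N}{\log(t_{mr}/t_{(m-1)r})}\ \geq\ \frac{\log\delta^{rs_{0}}}{\log\delta^{r}}\ =\ s_{0},
\]
the bound only strengthening when $t_{mr}/t_{(m-1)r}$ exceeds $\delta^{r}$, since the surviving child count scales correspondingly.

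The main counting step is producing the $N$ children. Inside a parent $P$ there are at least $(1-2\beta)(t_{mr}/t_{(m-1)r})-O(1)$ candidate intervals $I_{mr,k}\subseteq P$; among these one discards those violating $I_{mr,k}\subseteq G_{n}$ for some intermediate level $n\in\{(m-1)r+1,\ldots,mr-1\}$. A direct check shows that such a violation amounts, modulo an $O(1)$ boundary correction, to the condition $\|t_{n}(k+1/2)/t_{mr}-1/2\|>(1-2\beta)(1-t_{n}/t_{mr})/2$, which excludes at most a proportion $2\beta+(1-2\beta)t_{n}/t_{mr}$ of the candidates. Summing over the $r-1$ intermediate levels and exploiting the geometric tail $\sum t_{n}/t_{mr}\leq 1/(\delta-1)$, the particular form $\beta\asymp(r+1)^{-1}\delta^{-4r}$ is calibrated precisely so that the total discarded count is less than the total candidate count minus $N$.

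The hard part is exactly this counting when $\delta$ is close to $1$: the naive union bound across the $r-1$ intermediate levels gives a bad fraction only bounded by $\approx 2\beta r+1/(\delta-1)$, which may exceed $1$ and leaves no surviving candidates. Overcoming this requires either iterating the branching argument one level at a time (chaining survivors across consecutive constraints, rather than applying a single union bound), or exploiting the nested arithmetic structure of the $G_{n}$ for successive $n$, which is why the exponent $-4r$ (rather than merely $-r$) appears in $\beta$. Carrying out this refined bookkeeping uniformly over all sequences $(t_{n})$ with $q_{n}\geq\delta$ is the technical core of the proof.
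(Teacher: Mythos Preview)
The paper does not prove this theorem; it is quoted from Pollington~\cite{pollington}, and the only comment the paper adds is that the explicit value of $\beta$ can be read off from formulas (3), (4) and (4a) of that reference. So there is no proof in the paper to compare against.

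Evaluating your sketch on its own merits: the overall architecture---a Cantor construction over $r$-step blocks together with the mass-distribution principle---is the standard and correct one, but your proposal does not actually close. You identify the gap yourself: the naive union bound over the $r-1$ intermediate constraints produces a discarded fraction of order $2\beta r + 1/(\delta-1)$, which for $\delta$ close to $1$ exceeds $1$ and leaves no surviving children. You then say one must ``iterate one level at a time'' or exploit ``nested arithmetic structure'' and that this ``is the technical core of the proof''---but you do not carry it out. That is exactly the part of Pollington's argument that requires work; without it the claimed child count $\geq N \geq \delta^{rs_{0}}$ is unsubstantiated, and the rest of the outline collapses.

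There is a second, smaller issue in the dimension step. The displayed inequality $\dim_{H} T \geq \log N / \log(t_{mr}/t_{(m-1)r})$ is not well-posed as written, since the right-hand side depends on $m$. Your parenthetical remark that ``the surviving child count scales correspondingly'' when $t_{mr}/t_{(m-1)r}$ is large presupposes that you redo the counting with the actual ratio in place of $\delta^{r}$, which again rests on the unfinished counting argument. A clean version needs either a uniform lower bound on child count relative to the actual scaling ratio at each block, or a more careful application of the mass-distribution principle that handles variable block ratios.
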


The explicit bounds in dependence of $\beta$ are not explicitly stated in
the formulation of the central theorem of~\cite[page~511]{pollington}, but were
in fact established in the paper, see the formulas (3),(4) and (4a) in~\cite{pollington}.

\section{Outline of selected results} \label{outline}

We outline the most important results which we will establish.
However, we point out that in the course their proofs, several other
results will be derived that are of some interest on their own and not covered in the current section.
In particular the results concerning
the case of fixed $\alpha$ in Section~\ref{fixedalpha} and the first
part of Section~\ref{epsfix} are self-contained and not part of this overview.

Our first selected result deals with the root distribution of polynomials with integral coefficients.
It arises as a corollary of our study of the
sequences $(\alpha\zeta^{n})_{n\geq 1}$, combined with a result due to Dubickas.
As usual let $L(P):=\sum_{i=0}^{m} \vert a_{i}\vert$
for a polynomial $P(X)=a_{0}+a_{1}X+\cdots+a_{m}X^{m}$,
and $L(\zeta)=L(P)$ for an algebraic number $\zeta$ where $P\in{\mathbb{Z}[X]}$ is the minimal
polynomial of $\zeta$ in lowest terms.
\begin{theorem} \label{hot}
Assume real algebraic $\zeta$ satisfies $2(\zeta-1)>L(\zeta)$.
Then $\zeta$ is a Pisot number. In other words, if $P\in{\mathbb{Z}[X]}$
has a real root larger than $L(P)/2+1$, all the other roots of $P$ lie inside the unit circle.
\end{theorem}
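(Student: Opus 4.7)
The plan has two steps: first reduce to the algebraic-integer case, then invoke a Pisot characterization (due to Dubickas, as flagged in the outline).

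\textbf{Step 1: $\zeta$ is an algebraic integer.} Let $P(X)=\sum_{i=0}^{m}a_{i}X^{i}\in\mathbb{Z}[X]$ be the minimal polynomial of $\zeta$ in lowest terms, so $a_{m}\geq 1$. Since $P(\zeta)=0$ and $\zeta>0$, one has $a_{m}\zeta^{m}=|\sum_{i<m}a_{i}\zeta^{i}|\leq(L(\zeta)-a_{m})\zeta^{m-1}$, hence $\zeta\leq L(\zeta)/a_{m}-1$. Combined with the hypothesis $\zeta>L(\zeta)/2+1$, this forces $a_{m}<2$, so $a_{m}=1$ and $\zeta$ is an algebraic integer.

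\textbf{Step 2: $\zeta$ is Pisot.} With $a_{m}=1$, set $\epsilon_{n}:=\zeta^{n}-\scp{\zeta^{n}}$, so $|\epsilon_{n}|=\|\zeta^{n}\|\leq 1/2$. The recurrence $\zeta^{n+m}+\sum_{i<m}a_{i}\zeta^{n+i}=0$, combined with the integrality of $\scp{\zeta^{n+k}}$, yields that $B_{n}:=\epsilon_{n+m}+\sum_{i<m}a_{i}\epsilon_{n+i}$ is an integer with $|B_{n}|\leq L(\zeta)/2<\zeta-1$. This quantitative control on the fluctuations of the powers modulo $1$, coupled with the analysis of $\alpha\zeta^{n}$-sequences developed earlier in the paper and the Dubickas-type Pisot characterization flagged in the outline, forces all other conjugates of $\zeta$ into the open unit disc, so $\zeta$ is Pisot.

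\textbf{Main obstacle.} The delicate point is to exclude the Salem alternative: if some conjugate satisfies $|\zeta_{j}|=1$, the bound $|B_{n}|\leq L(\zeta)/2$ alone leaves room for $\epsilon_{n}$ to stay bounded away from zero, and the classical Pisot characterization in Theorem~\ref{pisot} does not directly apply. It is precisely at this stage that the Dubickas-type refinement (which crucially uses the strict inequality $2(\zeta-1)>L(\zeta)$, well above the Salem-number regime) seals the conclusion. The reformulation about arbitrary $P\in\mathbb{Z}[X]$ will then follow once one transfers the $L$-norm bound from $P$ to the minimal polynomial of its large real root.
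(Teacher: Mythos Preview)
Your Step~1 is correct and is also implicit in the paper (see the remark after \eqref{eq:hoehet}). But Step~2 is not a proof: the bound $|B_{n}|\leq L(\zeta)/2$ is just the trivial consequence of $|\epsilon_{k}|\leq 1/2$, and nothing prevents $B_{n}$ from being any integer in $[-L(\zeta)/2,L(\zeta)/2]$. From this alone no conclusion about the conjugates of $\zeta$ follows, and ``the Dubickas-type refinement seals the conclusion'' is an appeal to an unstated result, not an argument. The paper's actual mechanism is to confront two bounds on $\tilde{\epsilon}_{1}(\zeta)=\inf_{\alpha\neq 0}\limsup_{n}\Vert\alpha\zeta^{n}\Vert$: the upper bound $\tilde{\epsilon}_{1}(\zeta)\leq 1/(2(\zeta-1))$ established in Theorem~\ref{muskatottonel} (restated in Theorem~\ref{epsilont}), and Dubickas' lower bound \eqref{eq:dub}, which gives $\tilde{\epsilon}_{1}(\zeta)\geq 1/L(\zeta)$ whenever $\zeta$ is algebraic but neither Pisot nor Salem. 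Under the hypothesis $2(\zeta-1)>L(\zeta)$ these are incompatible, forcing $\zeta$ to be Pisot or Salem. Your sketch never invokes the upper bound --- the existence of some $\alpha$ with $\limsup_{n}\Vert\alpha\zeta^{n}\Vert\leq 1/(2(\zeta-1))$ --- which is precisely what Section~\ref{kkk} is built to provide.

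The exclusion of the Salem case is also mis-identified. It does not come from a Dubickas refinement but from a separate inequality of Dobrowolski: any $P\in\mathbb{C}[X]$ with a root on the unit circle satisfies $L(P)\geq 2M(P)$. Hence a Salem number $\zeta$ with minimal polynomial $P$ satisfies $\zeta\leq M(P)\leq L(P)/2<L(P)/2+1$, contradicting the hypothesis outright. This ingredient is independent of the $\Vert\alpha\zeta^{n}\Vert$ analysis and needs to be supplied explicitly.
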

We compare Theorem~\ref{hot}
with the well-known bounds
\begin{align}
\max_{1\leq j\leq m}\vert \zeta_{j}\vert&\leq 1+\frac{\max_{i\neq m}\vert a_{i}\vert}{\vert a_{m}\vert}\leq 1+\frac{H(P)}{\vert a_{m}\vert}
\leq 1+\frac{L(P)}{\vert a_{m}\vert}  \label{eq:hoehet} \\
M(P)&:=\vert a_{m}\vert\prod_{j=1}^{m} \max\{1,\vert \zeta_{j}\vert\}\leq \Vert P\Vert_{2}:=
\sqrt{\sum_{i=0}^{m} \vert a_{i}\vert^{2}}\leq L(P),  \label{eq:tiefet}
\end{align}
for arbitrary $P(X)=a_{m}X^{m}+a_{1}X+\cdots+a_{0}\in{\mathbb{C}[X]}$, see~\cite{mignotte}.
Here $\zeta_{j}$ are the roots of $P$
and $H(P)=\max_{0\leq j\leq m} \vert a_{i}\vert$. In view of \eqref{eq:hoehet},
the existence of a root as in the last claim of Theorem~\ref{hot} requires that $P$ is monic.
In this case combination of \eqref{eq:tiefet} and the assumption of Theorem~\ref{hot}
yield that the remaining roots have modulus less than $L(P)/(L(P)/2+1)<2$, a weaker conclusion
than Theorem~\ref{hot}. It is easy to construct non-trivial $P\in{\mathbb{Z}[X]}$ with the inferred
bound arbitrarily close to $2$. Relations between $l(P), L(P), M(P)$ have
been studied by Dubickas~\cite{dubbull} and Schinzel~\cite{schinzel},~\cite{schinzel2},~\cite{schinzel3}.

For Pisot numbers the inequality $2(\zeta-1)>L(\zeta)$ can be satisfied.
Take for instance $\zeta=\zeta_{m,b}$ the Pisot root of $P_{m,b}(X)=X^{m}-bX^{m-1}-1$ for integers $m\geq 2, b\geq 4$.
Indeed $P_{m,b}$ has a root in $(b,b+1)$ by intermediate value theorem
and $L(P_{m,b})=b+2$, so $b\geq 4$ is certainly sufficient
for $2(\zeta_{m,b}-1)>L(\zeta_{m,b})$ and Rouchee's~Theorem implies that these polynomials are indeed Pisot polynomials
(Theorem~\ref{hot} also implies $P_{m,b}$ is a Pisot polynomial).
In fact the expression $L(\zeta_{m,b})/(\zeta_{m,b}-1)$ tends to $1$ as $b\to\infty$.
On the other hand, a Pisot number need not satisfy $\zeta>L(\zeta)/2+1$,
for instance take $\zeta=\zeta_{m,b}$ with $m=2$ and $b\in{\{1,2,3\}}$. Thus Theorem~\ref{hot}
only yields a sufficient condition for an algebraic number to be a Pisot number.

For the other results we need to introduce some notation.

\begin{definition}  \label{wdeff}
For real numbers $\zeta>1,\epsilon>0$, let $\varpi_{\epsilon,\zeta}$ be
the set of all real $\alpha\neq 0$ such that
$\Vert \alpha\zeta^{n}\Vert\leq \epsilon$ for all $n\geq n_{0}(\alpha,\zeta,\epsilon)$.
\end{definition}

Obviously $\varpi_{\epsilon_{0},\zeta}\subseteq \varpi_{\epsilon_{1},\zeta}$
for $\epsilon_{0}<\epsilon_{1}$ and any $\zeta$. Note also that
$\varpi_{\epsilon,\zeta}\neq \emptyset$ for all $\epsilon>0$ is a necessary condition
for $\lim_{n\to\infty}\Vert\alpha\zeta^{n}\Vert=0$.
In fact, $\bigcap_{\epsilon>0} \varpi_{\epsilon,\zeta}$ is the set of values $\alpha$
such that $\lim_{n\to\infty} \Vert \alpha\zeta^{n}\Vert=0$ for a fixed $\zeta$.
The sets $\varpi_{\epsilon,\zeta}$ are obviously closed under any map $\tau_{k,\zeta}:\alpha\mapsto \alpha\zeta^{k}$
for $k$ a positive integer. We investigate the cardinality of these sets.
More precisely, our focus is on understanding the derived quantities
\begin{align*}
\tilde{\epsilon}_{1}&=\tilde{\epsilon}_{1}(\zeta):=
\sup \left\{\epsilon>0: \varpi_{\epsilon,\zeta}=\emptyset \right\}
=\inf \left\{ \epsilon>0: \left\vert \varpi_{\epsilon,\zeta} \right\vert\geq \vert \mathbb{Z}\vert \right\}, \\
\tilde{\epsilon}_{2}&=\tilde{\epsilon}_{2}(\zeta):=
\sup \left\{ \epsilon>0: \left\vert \varpi_{\epsilon,\zeta} \right\vert\leq \vert \mathbb{Z}\vert \right\}
=\inf \left\{ \epsilon>0: \left\vert \varpi_{\epsilon,\zeta} \right\vert>\vert \mathbb{Z}\vert \right\}.
\end{align*}
An equivalent definition of $\tilde{\epsilon}_{1}$ is given by
\[
\tilde{\epsilon}_{1}(\zeta)= \inf_{\alpha\neq 0} \; \limsup_{n\to\infty} \Vert \alpha\zeta^{n}\Vert.
\]
Obviously $0\leq \tilde{\epsilon}_{1}(\zeta)\leq \tilde{\epsilon}_{2}(\zeta)\leq 1/2$ for all real $\zeta$.
We will establish the better bounds given in the following theorem.

\begin{theorem} \label{epsilont}
For any $\zeta>1$ we have
\[
0\leq \tilde{\epsilon}_{1}(\zeta)\leq \min\left\{\frac{1}{2},\frac{1}{2(\zeta-1)}\right\},
\qquad \frac{1}{2(\zeta+1)}\leq \tilde{\epsilon}_{2}(\zeta)\leq \min\left\{\frac{1}{2},\frac{1}{\zeta-1}\right\}.
\]
\end{theorem}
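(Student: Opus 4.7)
The plan is to verify the three nontrivial inequalities $\tilde{\epsilon}_1(\zeta)\leq 1/(2(\zeta-1))$, $\tilde{\epsilon}_2(\zeta)\geq 1/(2(\zeta+1))$ and $\tilde{\epsilon}_2(\zeta)\leq 1/(\zeta-1)$; the complementary bounds $\tilde{\epsilon}_1(\zeta)\geq 0$ and $\tilde{\epsilon}_2(\zeta)\leq 1/2$ are immediate from $\Vert\cdot\Vert\in{[0,1/2]}$. All three rely on the same back-and-forth between a real $\alpha$ and its sequence of nearest-integer codes $a_n:=\scp{\alpha\zeta^n}$. Writing $\theta_n:=\alpha\zeta^n-a_n$, the identity $\alpha\zeta^{n+1}=\zeta(\alpha\zeta^n)$ rewrites as
\[
a_{n+1}-\zeta a_n = \zeta\theta_n - \theta_{n+1},
\]
and this single relation feeds both the construction and the uniqueness arguments below.

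For the upper bound on $\tilde{\epsilon}_1(\zeta)$ I would run the correspondence in reverse: starting from $a_0=1$, inductively choose $a_{n+1}:=\scp{\zeta a_n}$, so that $\epsilon_n:=a_{n+1}-\zeta a_n$ satisfies $|\epsilon_n|\leq 1/2$. A telescoping argument shows that $a_n/\zeta^n$ converges to $\alpha:=1+\sum_{k\geq 0}\zeta^{-k-1}\epsilon_k$, and
\[
|\alpha\zeta^n - a_n|=\Bigl|\sum_{k\geq n}\zeta^{n-k-1}\epsilon_k\Bigr|\leq \frac{1}{2(\zeta-1)}.
\]
For $\zeta>2$ this also gives $|\alpha|\geq 1-1/(2(\zeta-1))>0$, so $\alpha\neq 0$ and $\limsup_n\Vert\alpha\zeta^n\Vert\leq 1/(2(\zeta-1))$; for $\zeta\leq 2$ the stated bound is the trivial $1/2$.

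For $\tilde{\epsilon}_2(\zeta)\leq 1/(\zeta-1)$, substantive only when $\zeta>3$, I would enlarge the construction by allowing at each step any $a_{n+1}$ among the integers in $[\zeta a_n-1,\zeta a_n+1]$, which is either $2$ or $3$ integers depending on whether $\zeta a_n\in{\mathbb{Z}}$. The resulting tree has at least two children at every node, hence admits at least $2^{\aleph_0}$ infinite paths. Each path produces $\alpha$ with $\Vert\alpha\zeta^n\Vert\leq 1/(\zeta-1)<\epsilon$ for any fixed $\epsilon>1/(\zeta-1)$, and for $\zeta>3$ two different paths $(a_n),(a_n')$ cannot produce the same $\alpha$: if they did, then $|a_n-a_n'|<2/(\zeta-1)<1$ would force $a_n=a_n'$ for all $n$. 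Hence $|\varpi_{\epsilon,\zeta}|\geq 2^{\aleph_0}$ for every $\epsilon>1/(\zeta-1)$.

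Finally, for $\tilde{\epsilon}_2(\zeta)\geq 1/(2(\zeta+1))$, fix $\epsilon<1/(2(\zeta+1))$ and $\alpha\in\varpi_{\epsilon,\zeta}$. Using $|\theta_n|\leq\epsilon$ for $n\geq n_0$ in the recursion above yields $|a_{n+1}-\zeta a_n|\leq(\zeta+1)\epsilon<1/2$, so $a_{n+1}$ is forced to be the \emph{unique} integer nearest to $\zeta a_n$. The tail $(a_n)_{n\geq n_0}$ is therefore determined by the pair $(n_0,a_{n_0})\in{\mathbb{N}\times\mathbb{Z}}$, and $\alpha=\lim_n a_n/\zeta^n$ by the sequence, so $\varpi_{\epsilon,\zeta}$ is countable. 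The step I expect to be the real obstacle is the uncountability argument for $\tilde{\epsilon}_2(\zeta)\leq 1/(\zeta-1)$: both the admissibility of the tree construction and the injectivity of the map from integer sequences to $\alpha$ hinge on the same inequality $1/(\zeta-1)<1/2$, which is precisely what restricts the nontrivial content of that bound to $\zeta>3$, and one must be a bit careful in the degenerate case $\zeta a_n\in{\mathbb{Z}}$ (which is handled automatically by the $[\zeta a_n-1,\zeta a_n+1]$ enlargement that keeps branching alive).
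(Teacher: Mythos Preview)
Your argument is correct and follows essentially the same route as the paper: the paper packages the three bounds into Theorems~3.4 and~3.7, whose proofs use the nested-interval formulation $I_j=[(N_j-\epsilon)/\zeta^{n_0+j},(N_j+\epsilon)/\zeta^{n_0+j}]$, but this is equivalent to your recursion $a_{n+1}\approx\zeta a_n$ with the telescoping tail $\sum_{k\geq n}\zeta^{n-k-1}\epsilon_k$, and the uniqueness/branching counts rest on the same inequalities $|a_{n+1}-\zeta a_n|<1/2$ and $|a_{n+1}-\zeta a_n|\leq 1$. Your remark that the nontrivial content of the two upper bounds appears only for $\zeta>2$ resp.\ $\zeta>3$ matches the paper's implicit use of the $\min\{1/2,\cdot\}$ truncation.
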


\begin{remark}
For rather small values of $\zeta$ the upper bound $1/2$ in Theorem~\ref{epsilont}
can be slightly reduced, provided a slight modification of Pollington's result holds.
Assume Theorem~\ref{pollington} with the same effective bound for $\beta$
is valid if the fractional parts $\{t_{k}\alpha\}$ avoid the
interval $(1/2-\beta,1/2+\beta)$ instead of the open intervals of the same length $2\beta$ around integers
as in the theorem.
Looking at the proof of Theorem~\ref{pollington} in~\cite{pollington} this shift invariance seems very reasonable.
Put $t_{n}=\zeta^{n}$ and observe we may let $s_{0}>0$ be arbitrarily small and still obtain
uncountably many elements $\alpha$ with the desired property. Thus with $r=r(\zeta)$ the smallest
positive integer with $\zeta^{r}>r+3$, we infer
\[
\tilde{\epsilon}_{2}(\zeta)\leq \vartheta(\zeta):=\frac{1}{2}-\frac{\zeta^{-4r}}{2(r+1)}.
\]
Numerical computations show $\vartheta(\zeta)$ improves the bound in Theorem~\ref{epsilont} for
$\zeta\in{I:=(1,2+\eta)}$ with a certain $\eta\in{(6\cdot 10^{-5},7\cdot 10^{-5})}$.
On the other hand, it is easy to check $\vartheta(\zeta)\in{(1/2-1/1024,1/2)}$ on the
entire interval $\zeta\in(1,\infty)$, and for $\zeta\in{I}$ even $\vartheta(\zeta)\in{(1/2-1/10368,1/2)}$,
so the improvement is small.
Also the lower bound $1/2-1/10368$ for $\vartheta(\zeta)$ can be attained up to arbitrarily small $\mu>0$
by taking $\zeta$ slightly larger than $\sqrt[3]{6}\approx 1.8171$.
Moreover $\vartheta(\zeta)$ obviously tends to $1/2$ as $\zeta$ tends to either $1$ or infinity.
\end{remark}

In fact we will prove a slight extension of Theorem~\ref{epsilont} in Section~\ref{cardgap}.
For algebraic numbers $\zeta>1$ we will further establish the following result concerning
$\tilde{\epsilon}_{1}(\zeta),\tilde{\epsilon}_{2}(\zeta)$. The proof of the first claim is
based on the properties of the Pisot numbers $\zeta_{m,b}$ carried out above, the second claim
follows from similar constructions we will present in Section~\ref{cardgap}.

\begin{theorem} \label{notintuitiver}
Let $m\geq 1$ be an integer and $\delta\in{(0,1)}$.
There exists a Pisot number {\upshape(}one may choose a unit{\upshape)} $\zeta$ of degree $m$ such that
$\frac{\delta}{\zeta-1}\leq \tilde{\epsilon}_{2}(\zeta)\leq \frac{1}{\zeta-1}$.
Moreover, there exists algebraic $\zeta>1$ of degree $m$ such that
$\frac{\delta}{2(\zeta-1)}\leq \tilde{\epsilon}_{1}(\zeta)\leq \frac{1}{2(\zeta-1)}$.
\end{theorem}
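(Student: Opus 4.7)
Both upper bounds are immediate from Theorem~\ref{epsilont}, so the work lies in supplying the matching lower bounds; the key device in either case is the recurrence argument already deployed after Theorem~\ref{hot}. For the first assertion I would take $\zeta=\zeta_{m,b}$, the dominant real root of $P_{m,b}(X)=X^{m}-bX^{m-1}-1$, which is a Pisot unit of degree $m$ when $m\geq 2$ (in the degenerate case $m=1$ one substitutes any large integer $\zeta=b\geq 2$). Given $\alpha\in\varpi_{\epsilon,\zeta}$, put $A_{n}:=\scp{\alpha\zeta^{n}}$: the identity $\zeta^{n+m}=b\zeta^{n+m-1}+\zeta^{n}$ combined with $\Vert\alpha\zeta^{n}\Vert\leq\epsilon$ for $n\geq n_{0}$ yields $|A_{n+m}-bA_{n+m-1}-A_{n}|\leq L(\zeta)\epsilon=(b+2)\epsilon$. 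For $\epsilon<1/L(\zeta)$ this integer must vanish, so $(A_{n})_{n\geq n_{0}}$ is determined by the data $(n_{0},A_{n_{0}},\ldots,A_{n_{0}+m-1})\in\mathbb{N}\times\mathbb{Z}^{m}$; the Pisot property then recovers $\alpha=\lim_{n\to\infty}A_{n}/\zeta^{n}$ uniquely from this sequence, so there are at most countably many $\alpha\in\varpi_{\epsilon,\zeta}$. Since $L(\zeta)=b+2$ and $\zeta=b+O(b^{1-m})$ give $L(\zeta)/(\zeta-1)\to 1$ as $b\to\infty$, any prescribed $\delta<1$ is matched by taking $b$ large enough that $\delta/(\zeta-1)<1/L(\zeta)$, yielding $\tilde{\epsilon}_{2}(\zeta)\geq\delta/(\zeta-1)$.

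For the second assertion I would run the same recurrence argument but with $\zeta$ of degree $m$ arranged so that \emph{all} Galois conjugates $\zeta_{2},\ldots,\zeta_{m}$ lie strictly outside the closed unit disk, while keeping $L(\zeta)/(\zeta-1)$ arbitrarily close to $2$. Under $\epsilon<1/L(\zeta)$ the derivation again produces the minimal-polynomial recurrence, so the integer sequence $A_{n}$ admits a spectral expansion $A_{n}=\sum_{i=1}^{m}c_{i}\zeta_{i}^{n}$. Boundedness of $A_{n}-\alpha\zeta^{n}$ for $n\geq n_{0}$ successively forces $c_{1}=\alpha$ (else $(c_{1}-\alpha)\zeta^{n}$ dominates) and then $c_{i}=0$ for $i\geq 2$ (since $|\zeta_{i}|>1$ would contribute an unbounded exponential); hence $A_{n}=\alpha\zeta^{n}$ exactly for $n\geq n_{0}$. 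Then $\zeta=A_{n+1}/A_{n}$ is rational, which for $m\geq 2$ contradicts the irrationality of $\zeta$ unless $A_{n}\equiv 0$; for $m=1$ writing $\zeta=p/q$ with $\gcd(p,q)=1$ and $q\geq 2$ the recurrence $qA_{n+1}=pA_{n}$ forces $q^{k}\mid A_{n}$ for every $k$, again giving $A_{n}\equiv 0$. Either way $\alpha=0$, contradicting $\alpha\neq 0$, so $\varpi_{\epsilon,\zeta}=\emptyset$ and $\tilde{\epsilon}_{1}(\zeta)\geq 1/L(\zeta)$; arranging $L(\zeta)\leq 2(\zeta-1)/\delta$ then delivers the claim.

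The principal obstacle is therefore to construct such $\zeta$ in every degree $m$. For $m=1$ one takes $\zeta=p/2$ with $p$ large odd, so that $L(\zeta)/(\zeta-1)=2(p+2)/(p-2)\to 2$. For $m=2$ the polynomials $X^{2}-bX+b$ with $b\geq 5$ are irreducible (the discriminant $b(b-4)$ is never a perfect square for $b\geq 5$), their two real roots both exceed $1$ (both positive with sum and product $b$, so the smaller root tends to $1^{+}$ as $b\to\infty$), and $L(\zeta)/(\zeta-1)\to 2$ in the limit. For $m\geq 3$ I would try a perturbation of the form $X^{m}-bX^{m-1}+(b+c_{m})$ with $c_{m}$ chosen to push the remaining $m-1$ roots just outside the unit circle, using Rouch\'e's theorem to locate them and a rational-root or reduction-mod-$p$ argument to secure irreducibility; making this construction work uniformly in $m$ is the delicate part of the proof.
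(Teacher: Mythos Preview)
Your treatment of the first assertion is correct and in fact slightly more self-contained than the paper's. The paper obtains $\tilde{\epsilon}_{2}(\zeta_{m,b})\geq 1/L(\zeta_{m,b})$ by quoting Dubickas' bound \eqref{eq:dub} for $\alpha\notin\mathbb{Q}(\zeta)$ and using that $\mathbb{Q}(\zeta)$ is countable (this is \eqref{eq:nacht}); your direct recurrence argument reaches the same conclusion without that black box. (The phrase ``the Pisot property then recovers $\alpha$'' is stronger than needed: all you actually use is $\zeta>1$, so that $A_{n}/\zeta^{n}\to\alpha$.)

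For the second assertion, however, there is a genuine gap. Your strategy --- force all conjugates of $\zeta$ strictly outside the closed unit disk so that the spectral expansion of the integer recurrence collapses to $A_{n}=\alpha\zeta^{n}$ --- is sound in principle, and you carry it out cleanly for $m=1,2$. But you yourself flag the construction for $m\geq 3$ as ``the delicate part'', and the sketch $X^{m}-bX^{m-1}+(b+c_{m})$ does not obviously produce such polynomials: keeping $m-1$ roots just outside the unit circle while simultaneously controlling $L(\zeta)/(\zeta-1)\to 2$ and securing irreducibility is a real difficulty that you have not resolved.

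The paper sidesteps this entirely. It takes $Q_{m,b}(X)=2X^{m}-bX^{m-1}-1$, whose largest real root $\eta_{m,b}>b/2$ satisfies $L(\eta_{m,b})/(\eta_{m,b}-1)\to 2$ as $b\to\infty$. Because the leading coefficient is $2$, the root $\eta_{m,b}$ is not an algebraic integer, hence neither a Pisot nor a Salem number; Dubickas' bound \eqref{eq:tag} then gives $\tilde{\epsilon}_{1}(\eta_{m,b})\geq 1/L(\eta_{m,b})$ directly, with no need to locate the remaining conjugates at all. So the missing idea is simply to drop monicity: a non-integral algebraic number is automatically excluded from the Pisot/Salem exception in \eqref{eq:dub}, which is exactly what your spectral argument was working hard to achieve by hand.
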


The first claim of Theorem~\ref{notintuitiver} is of particular interest because we will
carry out that we strongly expect (by a heuristic argument)
that for Lebesgue almost all $\zeta>1$ in fact $\frac{1}{2(\zeta-1)}$ is an upper
bound for $\tilde{\epsilon}_{2}(\zeta)$ as well. We will discuss this in Section~\ref{cardgap}.

For rational $\zeta$ we can further improve the bounds
from Theorem~\ref{epsilont}. As in~\cite{5}, for $z\in{\mathbb{R}}$ and $p/q$ rational in lowest terms let
\[
E(z):=\frac{1-(1-z)\prod_{m\geq 0} (1-z^{2^{m}})}{2z} , \qquad \tau(p/q):= \frac{E(q/p)}{p}.
\]
With this notation we have the following.

\begin{theorem} \label{gutkort}
Let $\zeta=p/q$ with integers $p>q\geq 2$ and $(p,q)=1$.
Then $\tilde{\epsilon}_{i}=\tilde{\epsilon}_{i}(p/q)$ for $i\in{\{1,2\}}$ satisfy
\[
\tau(p/q)\leq \tilde{\epsilon}_{1} \leq \min\left\{\frac{1}{2},\frac{q}{2(p-q)}\right\},  \quad
\max\left\{\tau(p/q),\frac{q}{2(p+q)}\right\}\leq \tilde{\epsilon}_{2}
\leq \min\left\{\frac{1}{2},\frac{q-1}{p-q}\right\}.
\]
In case of odd $q$, refined bounds are given by
\begin{equation} \label{eq:diebt}
\tilde{\epsilon}_{1}\leq \min\left\{\frac{1}{2},\frac{q-1}{2(p-q)}\right\},
\qquad   \max\left\{\tau(p/q),\frac{q+1}{2(p+q)}\right\} \leq \tilde{\epsilon}_{2}.
\end{equation}
In case of $q=2$, a refined bound for $\tilde{\epsilon}_{1}$ is
\[
\tilde{\epsilon}_{1}\leq \frac{1}{p}.
\]
\end{theorem}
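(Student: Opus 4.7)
The bounds $\tau(p/q)\leq\tilde\epsilon_1(p/q)$ and $\tau(p/q)\leq\tilde\epsilon_2(p/q)$ are immediate from the result of Dubickas in~\cite{5} (which shows $\limsup_{n\to\infty}\Vert\alpha(p/q)^n\Vert\geq\tau(p/q)$ for every real $\alpha\neq 0$), while $\tilde\epsilon_1(p/q)\leq q/(2(p-q))$ and $\tilde\epsilon_2(p/q)\geq q/(2(p+q))$ are obtained by substituting $1/(\zeta-1)=q/(p-q)$ and $1/(\zeta+1)=q/(p+q)$ into Theorem~\ref{epsilont}. It remains to establish the refined lower bound $(q+1)/(2(p+q))$ on $\tilde\epsilon_2$ for odd $q$, the upper bound $(q-1)/(p-q)$ on $\tilde\epsilon_2$, its refinement $(q-1)/(2(p-q))$ for $\tilde\epsilon_1$ in the odd case, and finally the special bound $\tilde\epsilon_1\leq 1/p$ when $q=2$.

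The common tool is the following recursion. For $\alpha\in\varpi_{\epsilon,p/q}$, set $m_n:=\scp{\alpha\zeta^n}$ and $\gamma_n:=\alpha\zeta^n-m_n$, so $|\gamma_n|\leq\epsilon$ for $n\geq n_0$. Multiplying $\alpha\zeta^{n+1}=(p/q)\alpha\zeta^n$ by $q$ gives
\[
 b_n:=qm_{n+1}-pm_n=p\gamma_n-q\gamma_{n+1}\in\mathbb Z,\qquad |b_n|\leq(p+q)\epsilon,
\]
together with $b_n\equiv-pm_n\pmod q$, which by $\gcd(p,q)=1$ determines the residue class of $b_n$. Conversely, every integer sequence $(m_n)$ with $|qm_{n+1}-pm_n|\leq N$ defines $\alpha:=\lim_{n\to\infty}m_n(q/p)^n$, and telescoping $\alpha-m_n(q/p)^n$ via geometric series gives $\Vert\alpha\zeta^n\Vert\leq N/(p-q)$.

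For the countability bounds: if $\epsilon<q/(2(p+q))$ (even $q$) or $\epsilon<(q+1)/(2(p+q))$ (odd $q$, using that $(q+1)/2\in\mathbb Z$ forces $|b_n|\leq(q-1)/2$), the allowable integer range for $b_n$ meets each residue class mod $q$ in at most one point, so $m_{n+1}$ is determined by $m_n$; the tail of $(m_n)$ is thus determined by one value and $|\varpi_{\epsilon,p/q}|\leq|\mathbb Z|$. For the uncountability upper bound $\tilde\epsilon_2\leq(q-1)/(p-q)$, take $N=q-1$: starting from $m_0$ not divisible by $q$, the non-zero residue class $-pm_n\pmod q$ contains exactly two integers in $\{-N,\ldots,N\}$ (differing by $q$), so $m_{n+1}$ has two admissible continuations differing by $1$; for $q\geq 2$ at least one of two consecutive integers is not divisible by $q$, so the invariant may be preserved and one builds a complete binary tree of sequences $(m_n)$. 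Distinct paths yield distinct $\alpha$'s, each satisfying $\Vert\alpha\zeta^n\Vert\leq(q-1)/(p-q)$. The refinement $\tilde\epsilon_1\leq(q-1)/(2(p-q))$ for odd $q$ follows by taking $N=(q-1)/2$: then $\{-N,\ldots,N\}$ is a complete system of representatives mod $q$, a single admissible sequence always exists, and existence of one $\alpha$ suffices for $\tilde\epsilon_1$.

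The subtlest case is $\tilde\epsilon_1(p/2)\leq 1/p$, where the recipe above yields only the weaker $1/(p-2)$. Here the parity constraint $b_n\equiv m_n\pmod 2$ forces $b_n\in\{-1,+1\}$ whenever $m_n$ is odd (and $b_n=0$ whenever $m_n$ is even); the plan is to exploit this sign structure to arrange the cancellation
\[
 \Big|\sum_{j\geq 0}b_{n+j}(2/p)^j\Big|\leq 1,
\]
which via $\gamma_n=(1/p)\sum_j b_{n+j}(2/p)^j$ delivers $|\gamma_n|\leq 1/p$. Concretely I would drive the recursion $\gamma_{n+1}=(p\gamma_n-b_n)/2$ by selecting at each step the admissible $b_n$ that minimises $|\gamma_{n+1}|$, and then verify that the invariant $|\gamma_n|\leq 1/p+o(1)$ can be maintained; alternatively, one may construct $\alpha$ explicitly as a series in powers of $2/p$ whose coefficients are tuned to the forced residue pattern mod $2$. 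Showing that the parity rigidity on $b_n$ does not obstruct the required cancellation is where I expect the main obstacle to lie.
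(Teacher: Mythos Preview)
Your recursion via $b_n=qm_{n+1}-pm_n$ and the resulting tree of admissible continuations is precisely the engine behind the paper's Propositions~\ref{p},~\ref{q},~\ref{r}; the countability, density and uncountability statements you extract match those propositions, and combined with Dubickas' bound $\tau(p/q)$ and Theorem~\ref{epsilont} this is exactly how the paper assembles the result. One small imprecision: you do not actually get a \emph{complete} binary tree in the uncountability argument, since one of the two consecutive candidates for $m_{n+1}$ may be divisible by $q$ (for $q=2$ this always happens). What is true, and suffices, is that along any path one cannot have $q\mid m_n$ for all large $n$ (when $q\mid m_n$ the only admissible value $b_n=0$ gives $m_{n+1}=pm_n/q$, so $\nu_q$ drops), hence branching occurs infinitely often and the tree still has uncountably many paths; this is how Proposition~\ref{p} argues.

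The genuine gap is the bound $\tilde\epsilon_1(p/2)\leq 1/p$. Your plan to engineer $\bigl|\sum_{j\geq 0} b_{n+j}(2/p)^j\bigr|\leq 1$ by choosing signs is the right target, but the parity constraint removes the freedom precisely when you would need it (once $m_n$ is even you are forced to $b_n=0$), and you correctly flag this as unresolved. The paper does not prove this bound from scratch either: it simply invokes the result \eqref{eq:dualte} of Dubickas~\cite{dubidu}, which asserts directly that $\Vert\alpha(p/2)^n\Vert\leq 1/p$ for all $n\geq 0$ admits a nonzero solution $\alpha$. Replacing your sketch by that citation completes the proof.
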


The lower bound $\tau(p/q)$ at several places is due to Dubickas, the remaining bounds will be
settled in Section~\ref{cardgap}.
We point out another result for rational $\zeta$, which again we will compare to other
results and interpret in Section~\ref{cardgap}.

\begin{theorem} \label{hundertt}
Let $\zeta=p/q>1$ be a rational number but no integer.
If for $\alpha\neq 0$ and some large integer $n$
all numbers $\alpha(p/q)^{n},\alpha(p/q)^{n+1},\ldots$,$\alpha(p/q)^{n+l}$
lie in the interval $[-1/(p+q),1/(p+q)]$ mod $1$, then we have the asymptotic estimate
\[
l \leq n\cdot\left(\frac{\log p}{\log q}-1\right)+\log \vert\alpha\vert+o(1), \qquad n\to\infty.
\]

In particular $\varpi_{\epsilon,\zeta}=\emptyset$
for all $\epsilon\leq 1/(p+q)$ and thus $\tilde{\epsilon}_{1}(p/q)\geq 1/(p+q)$.
\end{theorem}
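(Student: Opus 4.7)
My plan is to reduce the estimate to a divisibility argument via a standard telescoping trick. Let $a_j := \scp{\alpha(p/q)^{n+j}}$ and $r_j := \alpha(p/q)^{n+j} - a_j$ for $j = 0, 1, \ldots, l$, so the hypothesis reads $|r_j| \leq 1/(p+q)$. Using $\alpha(p/q)^{n+j+1} = (p/q)\alpha(p/q)^{n+j}$ and multiplying by $q$, one finds $q a_{j+1} - p a_j = p r_j - q r_{j+1}$; the right side has absolute value at most $1$, so $\delta_j := q a_{j+1} - p a_j \in \{-1, 0, 1\}$. Telescoping yields the identity $q^l a_l = p^l a_0 + c_l$ with $c_l := \sum_{k=0}^{l-1} p^{l-1-k} q^k \delta_k \in \mathbb{Z}$, while the same telescoping applied to the errors gives $c_l = p^l r_0 - q^l r_l$, whence $|c_l| \leq (p^l + q^l)/(p+q)$.

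The key step exploits $\gcd(p,q) = 1$: the identity forces $c_l \equiv -p^l a_0 \pmod{q^l}$. In the generic situation one can show $c_l = 0$: if $|r_j| < 1/(p+q)$ strictly throughout, then the one-step recursion $r_{j+1} = (p r_j - \delta_j)/q$ together with the bounds $|r_j|, |r_{j+1}| \leq 1/(p+q)$ leaves only $\delta_j = 0$. When $c_l = 0$, one obtains $q^l \mid a_0$; since $|\alpha|(p/q)^n \to \infty$ forces $a_0 \neq 0$ for $n$ large, this gives $q^l \leq |a_0| \leq |\alpha|(p/q)^n + 1$. Taking $\log_q$ and absorbing $O(1)$ error into $o(1)$ yields $l \leq n(\log p/\log q - 1) + \log|\alpha| + o(1)$, as claimed.

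The delicate point I expect to be hardest is handling the boundary regime where some $|r_j|$ equals $1/(p+q)$ exactly with $\delta_j \neq 0$; the dynamics then forces $r_{j+k}$ to remain at $\pm 1/(p+q)$ and $\delta_{j+k}$ to alternate in sign, so $c_l$ is nonzero. Writing $j^\ast = v_q(a_0)$ and $a_0 = q^{j^\ast} b$ with $\gcd(b, q) = 1$, the integer $c_l$ becomes explicitly computable and the congruence $c_l \equiv -p^l a_0 \pmod{q^l}$ reduces to a nonzero residue condition on $b$ modulo $q^{l - j^\ast}$. Checking that this, combined with $|a_0| \leq |\alpha|(p/q)^n + 1$, still yields $q^l \leq |\alpha|(p/q)^n \cdot q^{o(1)}$ is the essential technical obstacle: one must show that $b$ cannot be chosen arbitrarily small in the required residue class while still matching the prescribed magnitude.

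The in-particular statement $\varpi_{\epsilon, \zeta} = \emptyset$ for $\epsilon \leq 1/(p+q)$ is then immediate: any $\alpha \in \varpi_{\epsilon, \zeta}$ would, for every fixed large $n$, allow $l$ to be taken arbitrarily large, contradicting the linear-in-$n$ upper bound just established.
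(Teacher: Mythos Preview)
Your treatment of the strict-inequality regime is correct and is essentially the paper's argument in different clothing: the paper shows directly that $q\mid A_n$ is forced (else $\Vert\alpha\zeta^{n+1}\Vert>1/(p+q)$), giving $A_{n+1}=(p/q)A_n$, which is exactly your conclusion $\delta_j=0$. The bound $q^{l}\le |a_0|\le |\alpha|(p/q)^{n}+1$ then follows in both approaches.

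Where your proposal genuinely diverges is in the boundary case $|r_j|=1/(p+q)$, and here there is a gap. You correctly observe that once $r_j$ hits the boundary the errors stay there and the $\delta_j$ alternate, but you then leave unresolved the ``essential technical obstacle'' of converting the congruence $c_l\equiv -p^{l}a_0 \pmod{q^l}$ into the required size bound on $l$; as stated, the outline does not explain why the nonzero residue class forces $|a_0|$ to be large. The paper sidesteps this entirely by a much simpler reduction: it shows that $\Vert\alpha(p/q)^{m}\Vert=1/(p+q)$ can hold for only finitely many $m$. Indeed, one such equality forces $\alpha=a/b\in\mathbb{Q}$, and then for any prime $r\mid q$ the identity $(p+q)(ap^{m}-M_{m}bq^{m})=\pm bq^{m}$ gives $\nu_r(\text{LHS})=\nu_r(a)$ once $m>\nu_r(a)$, while $\nu_r(\text{RHS})\ge m$. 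Thus only $m\le \nu_r(a)$ are possible, and beyond that point the strict-inequality argument applies to the whole tail. This avoids your residue-class analysis altogether.
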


Finally we will derive the following results in the case that $\zeta$ is an integer.

\begin{theorem} \label{jep}
For an integer $\zeta=p/1>1$ we have
\[
\tilde{\epsilon}_{1}(\zeta)=\tilde{\epsilon}_{1}(p/1)=0, \qquad
\tau(p/1)\leq \tilde{\epsilon}_{2}(\zeta)\leq \frac{1}{p}-\frac{1}{p^{3}+p^{2}}.
\]
\end{theorem}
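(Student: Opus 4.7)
The plan is to dispatch the three assertions separately. The equality $\tilde{\epsilon}_{1}(p/1)=0$ is immediate: taking $\alpha=1$ gives $\|\alpha p^{n}\|=0$ for every $n\geq 1$, so $1\in\varpi_{\epsilon,p}$ for all $\epsilon\geq 0$. The lower bound $\tau(p/1)\leq\tilde{\epsilon}_{2}(p)$ is the $q=1$ case of Dubickas's general lower bound noted just after Theorem~\ref{gutkort}.

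The main content is the upper bound $\tilde{\epsilon}_{2}(p)\leq 1/p-\delta$, where $\delta:=1/(p^{3}+p^{2})=1/(p^{2}(p+1))$. I plan to exhibit an uncountable family of $\alpha\in\varpi_{1/p-\delta,p}$ via a block construction in base $p$. Introduce the length-$2$ blocks $W_{1}:=(0,p-1)$ and $W_{2}:=(p-1,0)$, and for each $(B_{j})_{j\geq 1}\in\{W_{1},W_{2}\}^{\mathbb{N}}$ let $\alpha\in[0,1)$ have base-$p$ expansion $d_{1}d_{2}\ldots=B_{1}B_{2}\ldots$. Since distinct block sequences yield distinct digit sequences (no such expansion is eventually $0$ nor eventually $p-1$), this produces continuum-many pairwise distinct $\alpha$.

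To verify $\sup_{n\geq 1}\|\alpha p^{n}\|\leq 1/p-\delta$, split on the parity of $n$. For even $n=2m$ the shift $T_{2m}=0.d_{2m+1}d_{2m+2}\ldots$ again begins at a block boundary, and $T_{2m}=(p-1)\sum_{j\geq 1}c_{j}p^{-2j}$ with $c_{j}\in\{1,p\}$ according as $B_{m+j}$ is $W_{1}$ or $W_{2}$. The self-similarity $T_{2m}=(p-1)c_{1}/p^{2}+T'/p^{2}$ with $T'$ in the same class reduces the even-shift claim to the forward invariance of $U:=[1/(p+1),1/p-\delta]\cup[1-1/p+\delta,p/(p+1)]$ under the two branches of this recursion; the relevant endpoint identities are $(p-1)/p^{2}+(p/(p+1))/p^{2}=1/p-\delta$ and $(p-1)/p+(1/(p+1))/p^{2}=1-1/p+\delta$, which produce the sharp constant $\delta=1/(p^{2}(p+1))$. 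Hence $\|T_{2m}\|\leq 1/p-\delta$. For odd $n=2m-1$ one has $T_{2m-1}=d_{2m}/p+T_{2m}/p$ with $d_{2m}\in\{0,p-1\}$; feeding in the even-shift bound gives $\|T_{2m-1}\|\leq 1/(p+1)$, which is strictly smaller than $1/p-\delta$ since $1/p-1/(p+1)=1/(p(p+1))>\delta$. The main obstacle is the self-similar invariance calculation for the even shifts, which leads to the endpoint identities above; the odd-shift bound and the cardinality count are then mechanical.
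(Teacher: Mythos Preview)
Your argument is correct. All three parts are handled properly, and the block construction with $W_{1}=(0,p-1)$ and $W_{2}=(p-1,0)$ does produce continuum-many $\alpha$ with $\sup_{n\geq 1}\Vert\alpha p^{n}\Vert\leq 1/p-1/(p^{2}(p+1))$. One small expository point: what your endpoint identities actually show is that each branch $x\mapsto (p-1)c/p^{2}+x/p^{2}$, $c\in\{1,p\}$, maps the full convex hull $[1/(p+1),p/(p+1)]$ of the value set into $U$; this is what you need (and is slightly stronger than mere forward invariance of $U$), since every even-shift value already lies in that hull.

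The paper's route for the upper bound is related but genuinely different. It also restricts to base-$p$ digits in $\{0,p-1\}$, but instead of a free block concatenation it perturbs the periodic string $\overline{0,p-1}$ by inserting single extra $(p-1)$-digits at positions whose mutual distances tend to infinity, with the additional constraint that $0$'s remain isolated. The bound $1/p-1/(p^{3}+p^{2})$ then appears as a $\limsup$: the sparseness of the insertions lets a parameter $l\to\infty$ as $n\to\infty$, and a direct geometric-series evaluation yields the constant. Your construction is more symmetric, needs no sparseness hypothesis, and gives the stronger uniform bound $\sup_{n}\Vert\alpha p^{n}\Vert\leq 1/p-1/(p^{3}+p^{2})$ rather than only a $\limsup$; the paper's construction, on the other hand, avoids the IFS-style invariance computation in favour of an explicit series estimate.
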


For example, for $p=10$ Theorem~\ref{jep} yields $0.099090099\cdots\leq \tilde{\epsilon}_{2}(10)\leq 0.099\overline{09}$.

\section{Preparatory cardinality results}   \label{kkk}

We will consider the situation of one fixed variable throughout the following.

\subsection{The case of fixed $\alpha$}  \label{fixedalpha}
We start with an easy proposition to simplify the proof
of Theorem~\ref{machet} later.

\begin{proposition} \label{bernardo}
Let $n$ be a positive integer, $x>3/2$ and $0<\epsilon<1/2$ be real numbers.
Then $(x+\epsilon)^{\frac{n+1}{n}}- (x-\epsilon)^{\frac{n+1}{n}}\geq 2\epsilon x^{\frac{1}{n}}$.
\end{proposition}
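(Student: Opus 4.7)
The plan is to exploit convexity of the function $f(t) = t^{(n+1)/n}$ on the positive reals. Note that $f'(t) = \frac{n+1}{n} t^{1/n}$ and $f''(t) = \frac{n+1}{n^2} t^{(1-n)/n} \geq 0$ for $t > 0$, so $f$ is convex. The convexity inequality $f(b) \geq f(a) + f'(a)(b-a)$ applied with $a = x-\epsilon$ and $b = x+\epsilon$ yields
\[
(x+\epsilon)^{\frac{n+1}{n}} - (x-\epsilon)^{\frac{n+1}{n}} \geq 2\epsilon \cdot \frac{n+1}{n}(x-\epsilon)^{1/n}.
\]

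It therefore suffices to prove $\frac{n+1}{n}(x-\epsilon)^{1/n} \geq x^{1/n}$. Raising to the $n$-th power, this is equivalent to the inequality $\left(1 + \frac{1}{n}\right)^n \geq \frac{x}{x-\epsilon}$. The left-hand side is a classical quantity that is increasing in $n$ with initial value $2$ at $n=1$, so it is at least $2$ for every positive integer $n$. For the right-hand side, I would use the given hypotheses: since $x > 3/2$ and $\epsilon < 1/2$ we have $x - \epsilon > 1$, so
\[
\frac{x}{x-\epsilon} = 1 + \frac{\epsilon}{x-\epsilon} < 1 + \frac{1/2}{1} = \frac{3}{2} < 2,
\]
which completes the proof.

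There is no serious obstacle here; the only point requiring any care is verifying that the constraints $x > 3/2$ and $\epsilon < 1/2$ produce a bound on $x/(x-\epsilon)$ strictly below $(1+1/n)^n$ for every $n \geq 1$. The choice of threshold $3/2$ in the hypothesis is in fact tuned precisely to this worst case $n=1$, where $(1+1/n)^n$ attains its minimum value $2$. I would also remark, as a sanity check, that a naive attempt via the derivative $g'(\epsilon) = \frac{n+1}{n}\bigl[(x+\epsilon)^{1/n} + (x-\epsilon)^{1/n}\bigr] - 2x^{1/n}$ of the difference fails to yield a clean argument, because concavity of $t \mapsto t^{1/n}$ pushes the bracketed sum the wrong way; this is why I favor the convexity estimate on $f$ itself, applied one-sidedly at $x-\epsilon$, which is exactly strong enough.
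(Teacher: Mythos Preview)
Your proof is correct and takes a genuinely different route from the paper's. The paper expands $\varphi_n(t)=t^{(n+1)/n}$ to second order with Lagrange remainder at both $x+\epsilon$ and $x-\epsilon$, obtaining
\[
\varphi_n(x+\epsilon)-\varphi_n(x-\epsilon)=2\epsilon x^{1/n}+\tfrac{2}{n}\epsilon x^{1/n}+\tfrac{\epsilon^2}{2}\bigl(\varphi_n''(\theta_2)-\varphi_n''(\theta_1)\bigr),
\]
and then applies the mean value theorem once more to the second-derivative difference, bounding it via $|\varphi_n'''(\theta_3)|<1/n$ on $(x-\epsilon,x+\epsilon)\subseteq(1,\infty)$. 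This requires tracking derivatives up to third order and comparing two error terms.

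Your argument is shorter and more elementary: a single application of the tangent-line inequality for the convex function $f$ at the left endpoint $x-\epsilon$ reduces everything to $(1+1/n)^n\geq x/(x-\epsilon)$, which follows at once from the classical bound $(1+1/n)^n\geq 2$ together with $x-\epsilon>1$. Both proofs ultimately rest on the same numerical consequence $x-\epsilon>1$ of the hypotheses, but yours isolates it more transparently and avoids the third-derivative bookkeeping. Your closing remark is also apt: differentiating in $\epsilon$ leads to a sum $(x+\epsilon)^{1/n}+(x-\epsilon)^{1/n}$ that concavity bounds only from above, so the one-sided convexity estimate at $x-\epsilon$ is indeed the natural device here.
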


\begin{proof}
Define $\varphi_{n}: x\mapsto x^{\frac{n+1}{n}}$. We have to prove that
$\varphi_{n}(x+\epsilon)-\varphi_{n}(x-\epsilon)\geq 2\epsilon x^{\frac{1}{n}}$.
By Taylor expansion
$\varphi_{n}(x-\epsilon)= \varphi_{n}(x)-\epsilon \varphi_{n}^{\prime}(x)+
\frac{\epsilon^{2}}{2}\varphi_{n}^{\prime\prime}(\theta_{1})$
with some $\theta_{1}\in{(x-\epsilon,x)}$.
Similarly,
$\varphi_{n}(x+\epsilon)= \varphi_{n}(x)+\epsilon \varphi_{n}^{\prime}(x)+
\frac{\epsilon^{2}}{2}\varphi_{n}^{\prime\prime}(\theta_{2})$
with some $\theta_{2}\in{(x,x+\epsilon)}$.
Thus
\[
\varphi_{n}(x+\epsilon)-\varphi_{n}(x-\epsilon)= 2\frac{n+1}{n}\epsilon x^{\frac{1}{n}}+
\frac{\epsilon^{2}}{2}(\varphi_{n}^{\prime\prime}(\theta_{2})-\varphi_{n}^{\prime\prime}(\theta_{1}))
=2\epsilon x^{\frac{1}{n}}+\frac{1}{n}2\epsilon x^{\frac{1}{n}}
+\frac{\epsilon^{2}}{2}(\varphi_{n}^{\prime\prime}(\theta_{2})-\varphi_{n}^{\prime\prime}(\theta_{1})).
\]
We would certainly be done if the equivalent assertions
\begin{equation} \label{eq:unicorn}
\frac{1}{n}2\epsilon x^{\frac{1}{n}}>
\frac{\epsilon^{2}}{2}(\varphi_{n}^{\prime\prime}(\theta_{1})-\varphi_{n}^{\prime\prime}(\theta_{2})) \quad \Longleftrightarrow \quad
\frac{1}{n} x^{\frac{1}{n}}>
\frac{\epsilon}{4}(\varphi_{n}^{\prime\prime}(\theta_{1})-\varphi_{n}^{\prime\prime}(\theta_{2}))
\end{equation}
hold. We look at the right side of the equivalence.
The left hand side is obviously bounded below by $1/n$.
Applying Taylor Theorem again to the right hand side gives that right hand side
is bounded above by
$\vert\frac{\epsilon}{4}2\epsilon \varphi_{n}^{\prime\prime\prime}(\theta_{3})\vert
=\vert\frac{\epsilon^{2}}{2}\varphi_{n}^{\prime\prime\prime}(\theta_{3})\vert
\leq \frac{1}{8}\vert\varphi_{n}^{\prime\prime\prime}(\theta_{3})\vert$
with some $\theta_{3}\in{(\theta_{1},\theta_{2})\subseteq{(x-\epsilon,x+\epsilon)}}$.
However, $\vert\varphi_{n}^{\prime\prime\prime}(\theta_{3})\vert=
(\frac{n^{2}-1}{n^{3}})\theta_{3}^{-2+\frac{1}{n}}< \frac{1}{n}\theta_{3}^{-2+\frac{1}{n}}<\frac{1}{n}$
since $\theta_{3}>x-\epsilon>\frac{3}{2}-\frac{1}{2}=1$, proving \eqref{eq:unicorn}.
\end{proof}

\begin{definition}
For arbitrary real numbers $\alpha,\epsilon>0$, let $\chi_{\epsilon,\alpha}$ be
the set of all real $\zeta>1$ such that
$\Vert \alpha\zeta^{n}\Vert\leq \epsilon$ for all $n\geq n_{0}(\alpha,\zeta,\epsilon)$.
\end{definition}

Obviously $\chi_{\epsilon_{0},\alpha}\subseteq \chi_{\epsilon_{1},\alpha}$
for $\epsilon_{0}<\epsilon_{1}$ and any $\alpha$.
Note also that for $\lim_{n\to\infty}\Vert\alpha\zeta^{n}\Vert=0$,
the condition $\chi_{\epsilon,\alpha}\neq \emptyset$ for all $\epsilon>0$ is necessary.
In fact, for $\alpha$ fixed, the set $\bigcap_{\epsilon>0} \chi_{\epsilon,\alpha}$ coincides with
the set of values $\zeta$ such that $\lim_{n\to\infty}\Vert \alpha\zeta^{n}\Vert= 0$.
It is not hard to check that the sets $\chi_{\epsilon,\alpha}$ are closed under the maps
$\iota_{k}: \zeta\mapsto \zeta^{k}$ for $k\geq 1$ an integer.

The next Theorem~\ref{machet} is connected to Theorem~\ref{mosh}.
Given $\epsilon>0$, we explicitly construct intervals in which the
investigated set $\chi_{\epsilon,\alpha}$ of values $\zeta$ is dense or uncountable.
We point out in advance that it will turn out in Theorem~\ref{alzheimard} that indeed we do not
obtain uncountably many suitable values $\zeta$ in intervals of the form $(1,C)$ for sufficiently small $C$.
We restrict to the case of symmetric intervals with respect to $0$,
however the proof of this and most other results of Section~\ref{kkk}
easily extends to the more general case of arbitrary intervals of length $2\epsilon$,
see Remark~\ref{guteremark} and Remark~\ref{auchremark}. We remark that throughout the paper some results
stating that particular sets are uncountable use a method related to the one used by Pollington~\cite{pollington}
in Theorem~\ref{pollington}. A perspective for further research could be to provide more concise information
on Hausdorff dimensions of the involved sets.

\begin{theorem} \label{machet}
Let $\alpha,\epsilon>0$ be real numbers. The set
$\chi_{\epsilon,\alpha}\cap (1+\frac{1}{2\epsilon},\infty)$ is dense in $(1+\frac{1}{2\epsilon},\infty)$.
For any $a,b$ with $b>\max\{a,1+\frac{1}{\epsilon}\}$
the set $\chi_{\epsilon,\alpha}\cap (a,b)$ has cardinality of $\mathbb{R}$.
\end{theorem}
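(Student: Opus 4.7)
The strategy is to build $\zeta \in \chi_{\epsilon,\alpha}$ as a point in a nested intersection
\[
\bigcap_{n \geq n_0} I_n, \qquad I_n := \{\zeta > 1 : |\alpha \zeta^n - M_n| \leq \epsilon\},
\]
for a sequence of integers $(M_n)_{n \geq n_0}$ to be chosen inductively. Throughout I may assume $\epsilon < 1/2$; otherwise $\Vert\cdot\Vert \leq 1/2 \leq \epsilon$ is automatic, $\chi_{\epsilon,\alpha} = (1,\infty)$, and both assertions are trivial. Note that under $\epsilon < 1/2$, the condition $|\alpha\zeta^n - M_n| \leq \epsilon$ forces $M_n = \scp{\alpha\zeta^n}$ and hence $\Vert\alpha\zeta^n\Vert \leq \epsilon$ holds for every $\zeta$ in $I_n$.

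For density, fix $\zeta_0 > 1 + \tfrac{1}{2\epsilon}$ and $\eta > 0$, pick $\delta > 0$ with $\zeta_0 > 1 + \tfrac{1}{2\epsilon} + 2\delta$, and take $n_0$ large enough that with $M_{n_0}$ the integer nearest to $\alpha\zeta_0^{n_0}$ we have $M_{n_0} \geq 2$ and $I_{n_0} \subset (\zeta_0-\eta,\zeta_0+\eta) \cap (1 + \tfrac{1}{2\epsilon} + \delta, \infty)$. This is possible since the inverse of $\zeta \mapsto \alpha\zeta^{n_0}$ has derivative of order $1/(n_0\alpha\zeta_0^{n_0-1})$ near $\alpha\zeta_0^{n_0}$, so both $I_{n_0}$ and its distance to $\zeta_0$ shrink as $n_0 \to \infty$. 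For the inductive step, use the identity $\alpha\zeta^{n+1} = (\alpha\zeta^n)^{(n+1)/n}\alpha^{-1/n}$ and Proposition~\ref{bernardo} (with $x = M_n$ and the given $\epsilon$, valid because $M_n \geq 2 > 3/2$ and $\epsilon < 1/2$) to conclude that the image of $I_n$ under $\zeta \mapsto \alpha\zeta^{n+1}$ is a closed interval of length at least $2\epsilon (M_n/\alpha)^{1/n}$. Since $(M_n/\alpha)^{1/n}$ lies strictly between the two endpoints of $I_n$ and thus in $(1 + \tfrac{1}{2\epsilon} + \delta, \infty)$, this image has length exceeding $1 + 2\epsilon + 2\epsilon\delta$. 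Shrinking its two endpoints by $\epsilon$ yields an interval of length $> 1 + 2\epsilon\delta$ containing at least one integer $M_{n+1}$, for which $[M_{n+1}-\epsilon,M_{n+1}+\epsilon]$ fits inside the image; then $I_{n+1} \subseteq I_n$ is a non-empty closed sub-interval. The nested intersection produces a $\zeta \in \chi_{\epsilon,\alpha} \cap (\zeta_0 - \eta, \zeta_0 + \eta)$.

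For the uncountability claim, choose $\zeta_0 \in (a,b)$ with $\zeta_0 > 1 + \tfrac{1}{\epsilon} + 2\delta$ (possible because $b > 1 + 1/\epsilon$), and force $I_{n_0} \subset (a,b) \cap (1 + \tfrac{1}{\epsilon} + \delta, \infty)$. The same induction now yields image lengths exceeding $2 + 2\epsilon + 2\epsilon\delta$ and hence cores of length $> 2$, which contain at least two integers; this provides at least two valid choices of $M_{n+1}$ at every step. Distinct choices lead to disjoint $I_{n+1}$, because for $M_{n+1} \neq M'_{n+1}$ the two target intervals $[M_{n+1}-\epsilon,M_{n+1}+\epsilon]$ are separated by a positive gap $1 - 2\epsilon > 0$. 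A standard Cantor-style binary-tree argument then exhibits uncountably many distinct points in $\chi_{\epsilon,\alpha} \cap (a,b)$. The main technical obstacle is the uniform lower bound $2\epsilon(M_n/\alpha)^{1/n}$ on the length of the image of $I_n$, which is exactly what Proposition~\ref{bernardo} provides; the invariant $I_n \subseteq (1 + \tfrac{1}{2\epsilon} + \delta, \infty)$ (respectively $(1 + \tfrac{1}{\epsilon} + \delta, \infty)$) propagates for free from the nestedness of the $I_n$.
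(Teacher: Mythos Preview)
Your argument is correct and follows essentially the same strategy as the paper's proof: build nested intervals $I_n$ via the identity $\alpha\zeta^{n+1}=(\alpha\zeta^n)^{(n+1)/n}\alpha^{-1/n}$, invoke Proposition~\ref{bernardo} to bound the image length from below, and then choose one (respectively two) integer(s) $M_{n+1}$ inside the resulting ``core''. The only differences are cosmetic---you work with closed intervals and an explicit margin $\delta>0$, and you initialize from a point $\zeta_0$ rather than from a pair $(N_0,n)$ satisfying an inequality like the paper's \eqref{eq:megallan}---but the mechanism and the key use of Proposition~\ref{bernardo} are identical.
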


\begin{proof}
Fix $0<\epsilon<1/2$, which clearly is no restriction as the claim is trivial otherwise.
Moreover, we may assume $\alpha>0$.

Let $N_{0},n$ be any positive integers to be specified later such that
\begin{equation} \label{eq:megallan}
 (N_{0}-\epsilon)^{\frac{1}{n}}> \alpha^{\frac{1}{n}}\left(1+\frac{1}{2\epsilon}\right).
\end{equation}
Consider the interval
$I_{0}:=\left(\alpha^{-\frac{1}{n}}(N_{0}-\epsilon)^{\frac{1}{n}},\alpha^{-\frac{1}{n}}(N_{0}+\epsilon)^{\frac{1}{n}}\right)$.
By construction any $\zeta_{0}\in{I_{0}}$ satisfies $\alpha\zeta_{0}^{n}\in{J_{0}:=(N_{0}-\epsilon,N_{0}+\epsilon)}$.
Now by \eqref{eq:megallan} and Proposition \ref{bernardo} with $x:=N_{0}$, the interval
$K_{0}:=\left(\alpha^{-\frac{1}{n}}(N_{0}-\epsilon)^{\frac{n+1}{n}},
\alpha^{-\frac{1}{n}}(N_{0}+\epsilon)^{\frac{n+1}{n}}\right)$
has length at least $1+2\epsilon$.

Thus there exists an integer $N_{1}$ such that it contains
$J_{1}:=(N_{1}-\epsilon,N_{1}+\epsilon)$, so $J_{1}\subseteq K_{0}$. Putting
$I_{1}:=\left(\alpha^{-\frac{1}{n+1}}(N_{1}-\epsilon)^{\frac{1}{n+1}},
\alpha^{-\frac{1}{n+1}}(N_{1}+\epsilon)^{\frac{1}{n+1}}\right)$
we see that $I_{1}\subseteq I_{0}$ because by construction
$N_{1}\geq \alpha^{-\frac{1}{n}}(N_{0}-\epsilon)^{\frac{n+1}{n}}+\epsilon$
and hence
\begin{equation} \label{eq:vascogama}
 \alpha^{-\frac{1}{n+1}}(N_{1}-\epsilon)^{\frac{1}{n+1}}
\geq \alpha^{-\frac{1}{n+1}}\alpha^{-\frac{1}{n(n+1)}}(N_{0}-\epsilon)^{\frac{1}{n}}
= \alpha^{-\frac{1}{n}}(N_{0}-\epsilon)^{\frac{1}{n}},
\end{equation}
and similarly with inequality in reverse directions for the upper bounds of $I_{0},I_{1}$.
Combining \eqref{eq:megallan} and \eqref{eq:vascogama} yields in particular
\begin{equation} \label{eq:katzetot}
(N_{1}-\epsilon)^{\frac{1}{n+1}}>\alpha^{\frac{1}{n+1}}\left(1+\frac{1}{2\epsilon}\right).
\end{equation}
Furthermore, for any $\zeta_{1}\in{I_{1}}$ by construction
$\alpha\zeta_{1}^{n+1}\in{(N_{1}-\epsilon,N_{1}+\epsilon)}$.
So again by Proposition~\ref{bernardo} with $x:=N_{1}$ and \eqref{eq:katzetot}, if we similarly define
\[
K_{1}:=\left(\alpha^{-\frac{1}{n+1}}(N_{1}-\epsilon)^{\frac{n+2}{n+1}},
\alpha^{-\frac{1}{n+1}}(N_{1}+\epsilon)^{\frac{n+2}{n+1}}\right),
\]
the interval $K_{1}$ again has length at least $1+2\epsilon$.
Having now defined $I_{1},J_{1},K_{1}$ we can repeat the procedure to obtain $J_{2},I_{2},K_{2}$
in this succession with very similar properties.

Proceeding in this manner gives a sequence of nested intervals $I_{1}\supseteq I_{2}\supseteq I_{3}\cdots$.
Defining $\zeta:=\cap_{j\geq 0} I_{j}$, which clearly is a unique real number,
it is easy to see $\zeta$ has the desired property $\vert \alpha\zeta^{n+j}-N_{j}\vert \leq \epsilon$
for all $j\geq 0$.

To see $\chi_{\epsilon,\alpha}$ is dense in $(1+\frac{1}{2\epsilon},\infty)$, we need to show for
fixed $0<\epsilon<1/2$ and any given $d>c>1+\frac{1}{2\epsilon}$,
for some pair $(N_{0},n)$ satisfying \eqref{eq:megallan}
the $\zeta$ arising by the above construction has property $\zeta\in{(c,d)}$.
Indeed, it suffices to take any integer
$N_{0}\in{\left(\frac{c^{n}}{\alpha}+1,\frac{d^{n}}{\alpha}-1\right)}$ for $n$ sufficiently large
that the interval is non-empty, to guarantee $\zeta=\cap_{j\geq 0} I_{j}\subseteq I_{0}\subseteq (c,d)$ for
the resulting $\zeta$ as well as the condition \eqref{eq:megallan}.

To see $\chi_{\epsilon,\alpha}$ has cardinality of $\mathbb{R}$ in any non-empty
interval $(a,b)$ with $b>1+\frac{1}{\epsilon}$,
repeat the above construction with $(N_{0}-\epsilon)^{\frac{1}{n}}> \alpha^{\frac{1}{n}}(1+\frac{1}{\epsilon})$
instead of $\alpha^{\frac{1}{n}}(1+\frac{1}{2\epsilon})$, and observe that the resulting intervals $K_{j}$
have length at least $2+2\epsilon$. So we have the choice of at least two different values $N_{j}$
in each step. Different choices of $N_{j}$ by construction induce disjoint intervals $I_{j+1}$ in the next step,
so the intersections $\cap_{j\geq 0} I_{j}$ do not coincide for any two different choices as well.
Hence the set has cardinality of the power set of $\mathbb{N}$ which equals cardinality of $\mathbb{R}$,
and by a similar argument as above we may choose $I_{0}$ to
lie in any given interval $(a,b)$ with $b>a>1+\frac{1}{\epsilon}$.
Thus $\chi_{\epsilon,\alpha}\cap (a,b)$ has cardinality of $\mathbb{R}$.
\end{proof}

\begin{remark}
Note that the interval bounds in Theorem~\ref{machet} do not depend on $\alpha$. Moreover,
reviewing the proof, in fact the minimal $n=n_{0}(\alpha,\zeta,\epsilon)$ in the construction
for $\zeta$ in a given interval $\zeta\in{(c,d)}$ only depends on $c,d$, and the condition becomes
weaker with growing $c$ and $d-c$. Thus we may write $n\geq n_{0}(\alpha,\epsilon,d-c)$ for
all $\zeta\in{\chi_{\epsilon,\alpha}\cap(c,d)}$.
\end{remark}

\begin{remark} \label{guteremark}
The proof can be readily extended to the case where $\{\alpha\zeta^{n}\}$ lie in arbitrary
closed intervals $I_{n} \bmod 1$ of length $2\epsilon$. The same will apply to Theorem~\ref{alzheimard}.
\end{remark}

The proof of Theorem~\ref{machet} suggests that for all $\alpha\neq 0$, or at least
almost all $\alpha$ in the sense of Lebesgue measure,
in fact $\chi_{\epsilon,\alpha}\cap (1+\frac{1}{2\epsilon},\infty)$ should be uncountable.
Assume otherwise for some $\alpha\neq 0$ the set $\chi_{\epsilon,\alpha}\cap (1+\frac{1}{2\epsilon},\infty)$
is at most countable. Then starting with a pair $N_{0},n$ satisfying \eqref{eq:megallan},
the recursive process would yield only one integer in the intervals $K_{j}$ for
all large $j$ (else we have $2$ choices infinitely often, contradicting the assumption).
The intervals $K_{j}$ have length greater than $1+2\epsilon$, so this means
their midpoints avoid a neighborhood of $1/2$. It is reasonable to believe that this biased distribution leads
to a set of values $\alpha$ of measure $0$ for the fixed pair $N_{0},n$, see also Theorem~\ref{kok}.
Note that this must hold for any pair $N_{0},n$ satisfying \eqref{eq:megallan}.
A rigorous proof seems hard, however. We will carry out a similar phenomenon
in a preciser way in Section~\ref{fixedzeta}, see in particular Proposition~\ref{lemur}.

As indicated previous to Theorem~\ref{machet}, the set $\chi_{\epsilon,\alpha}$ is reasonably
smaller for $\zeta$ in a neighborhood of $1$.

\begin{theorem} \label{alzheimard}
For any fixed $\alpha\neq 0,\epsilon>0$, the set
$\chi_{\epsilon,\alpha}\cap (1,\frac{1}{2\epsilon}-1)$ is at most countable.
\end{theorem}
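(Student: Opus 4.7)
The plan is to parametrize each $\zeta\in\chi_{\epsilon,\alpha}\cap(1,\tfrac{1}{2\epsilon}-1)$ by a countable amount of data and then prune the resulting tree of possibilities down to countably many choices. Assume $\alpha>0$ and $\epsilon<1/4$, otherwise the statement is vacuous. For $\zeta\in\chi_{\epsilon,\alpha}$ and $n\geq n_{0}(\zeta)$ set $N_{n}:=\scp{\alpha\zeta^{n}}$, so that $|\alpha\zeta^{n}-N_{n}|\leq\epsilon$. Combining two consecutive such bounds immediately gives
\[
|\zeta N_{n}-N_{n+1}|\leq (1+\zeta)\epsilon,
\]
and the hypothesis $\zeta<\tfrac{1}{2\epsilon}-1$ forces $(1+\zeta)\epsilon<\tfrac{1}{2}$. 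Hence $N_{n+1}$ is the \emph{unique} nearest integer to $\zeta N_{n}$, the sequence $(N_{n})_{n\geq n_{0}}$ is completely determined by $\zeta$ and $N_{n_{0}}$ via the recursion $N_{n+1}=\scp{\zeta N_{n}}$, and conversely the map $\zeta\mapsto(N_{n})_{n\geq n_{0}}$ is injective because $\zeta=\lim_{n\to\infty}N_{n+1}/N_{n}$.

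To obtain countability I would fix a rational $M<\tfrac{1}{2\epsilon}-1$, set $\eta:=\tfrac{1}{2}-(1+M)\epsilon>0$, show that $\chi_{\epsilon,\alpha}\cap(1,M)$ is countable, and finally let $M\uparrow\tfrac{1}{2\epsilon}-1$ along rationals. Classify each $\zeta\in\chi_{\epsilon,\alpha}\cap(1,M)$ by the label $(n_{0}(\zeta),N_{n_{0}}(\zeta))\in\mathbb{N}\times\mathbb{Z}$ and show that for any fixed label only finitely many $\zeta$ can arise. Introduce $I_{n,N}:=\bigl[((N-\epsilon)/\alpha)^{1/n},((N+\epsilon)/\alpha)^{1/n}\bigr]$ and the nested intervals $J_{k}:=\bigcap_{j=0}^{k}I_{n_{0}+j,N_{n_{0}+j}}$. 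The identity $b^{n}-a^{n}=(b-a)\sum a^{i}b^{n-1-i}$ applied with $a,b$ the endpoints of $I_{n_{0}+k,N_{n_{0}+k}}$, together with $N_{n_{0}+k}\asymp\alpha\zeta^{n_{0}+k}$, yields the estimate
\[
|J_{k}|\leq |I_{n_{0}+k,N_{n_{0}+k}}|\leq\frac{2\epsilon\zeta}{(n_{0}+k)N_{n_{0}+k}}\bigl(1+o(1)\bigr),
\]
so the Lipschitz image $V_{k}:=\{\zeta^{\prime}N_{n_{0}+k}:\zeta^{\prime}\in J_{k}\}$ has length at most $2\epsilon M/(n_{0}+k)$ up to lower order. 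Once $n_{0}+k$ exceeds a threshold $k_{*}=k_{*}(M,\epsilon)$ one has $|V_{k}|<2\eta$; combined with the fact that the true $\zeta N_{n_{0}+k}$ is at distance $\geq \eta$ from every half-integer (because $|\zeta N_{n_{0}+k}-N_{n_{0}+k+1}|\leq (1+M)\epsilon=\tfrac{1}{2}-\eta$), this places $V_{k}\subset(N_{n_{0}+k+1}-\tfrac{1}{2},N_{n_{0}+k+1}+\tfrac{1}{2})$ and forces $N_{n_{0}+k+1}$ uniquely. For $k<k_{*}$ at most two integer values are possible since $|V_{k}|<1$ already and an interval of length less than $1$ meets at most one half-integer. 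Thus the tree of admissible continuations branches at most dyadically up to depth $k_{*}$ and is deterministic afterwards, so it has at most $2^{k_{*}}$ paths; by the injectivity noted in the first paragraph this gives at most $2^{k_{*}}$ values of $\zeta$ per initial label.

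The main obstacle I expect is the uniform variation estimate: pinning down $|V_{k}|\leq C(M,\epsilon)/(n_{0}+k)$ with a constant that depends only on $M$ and $\epsilon$, so that $k_{*}$ is independent of both $n_{0}$ and $N_{n_{0}}$. This is precisely why the argument must be restricted to $M$ strictly below $\tfrac{1}{2\epsilon}-1$: as $M$ approaches the critical value, the buffer $\eta$ collapses and $k_{*}$ diverges, but for each fixed admissible $M$ the argument closes cleanly and the countable union over rational $M<\tfrac{1}{2\epsilon}-1$ exhausts the interval $(1,\tfrac{1}{2\epsilon}-1)$. A minor additional care point, easily dispatched, is to handle the initial segment where $N_{n_{0}+k}$ may still be small, which can be absorbed into the threshold $k_{*}$.
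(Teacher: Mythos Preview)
Your proposal is essentially correct, but it takes a more circuitous route than the paper. The paper avoids the tree argument entirely by producing a recursion that does not involve $\zeta$: from $\alpha\zeta^{n}\in[N_{n}-\epsilon,N_{n}+\epsilon]$ and the identity $\alpha\zeta^{n+1}=(\alpha\zeta^{n})^{(n+1)/n}\alpha^{-1/n}$ one obtains
\[
\alpha\zeta^{n+1}\in\bigl[\alpha^{-1/n}(N_{n}-\epsilon)^{(n+1)/n},\;\alpha^{-1/n}(N_{n}+\epsilon)^{(n+1)/n}\bigr],
\]
an interval whose length is asymptotic to $2\epsilon\zeta\cdot\tfrac{n+1}{n}\to 2\epsilon\zeta<1-2\epsilon$. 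Hence for $n$ large there is at most one integer $N_{n+1}$ whose $\epsilon$-neighbourhood meets this interval, and this rule determines $N_{n+1}$ from $(n,N_{n},\alpha,\epsilon)$ alone, with no reference to $\zeta$. Thus the label $(n_{0},N_{n_{0}})$ already pins down the entire tail $(N_{n})_{n\geq n_{0}}$ and therefore $\zeta=\lim N_{n}^{1/n}$; no branching analysis, no cut-off at $M$, and no uniform-constant worries are needed.

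Your first paragraph instead records the $\zeta$-\emph{dependent} relation $N_{n+1}=\scp{\zeta N_{n}}$, which by itself says nothing about countability (distinct $\zeta$'s sharing $(n_{0},N_{n_{0}})$ would follow different sequences), and your second paragraph repairs this via the decay $|V_{k}|\lesssim 2\epsilon M/(n_{0}+k)$ and the finite-branching tree. That works, and your variation bound is in fact sharper (it tends to $0$ rather than being merely below $1-2\epsilon$), but the price is the extra machinery you yourself flag: the threshold $k_{*}$, the rational cut-off $M$, and the uniformity of the $o(1)$. The paper's key move---push $[N_{n}-\epsilon,N_{n}+\epsilon]$ forward through $x\mapsto\alpha^{-1/n}x^{(n+1)/n}$ rather than push $J_{k}$ through $\zeta'\mapsto\zeta' N_{n}$---eliminates the $\zeta$-dependence at the outset and collapses your two-stage argument to a single step. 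Incidentally, your first-paragraph recursion $N_{n+1}=\scp{\zeta N_{n}}$ is exactly what the paper uses for the dual result with $\zeta$ fixed (Theorem~\ref{machete}), where it \emph{is} the natural object.
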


\begin{proof}
By definition, if $\zeta$ lies in $\chi_{\epsilon,\alpha}$
there exists an integer sequence $(N_{n})_{n\geq 1}$ such that
$\alpha\zeta^{n}\in{[N_{n}-\epsilon,N_{n}+\epsilon]}$ for $n\geq n_{0}=n_{0}(\zeta,\epsilon,\alpha)$.
By $\alpha\zeta^{n+1}=(\alpha\zeta^{n})^{\frac{n+1}{n}}\alpha^{-\frac{1}{n}}$ we infer
\begin{equation} \label{eq:narrensicher}
\alpha^{-\frac{1}{n}}(N_{n}-\epsilon)^{\frac{n+1}{n}}\leq \alpha\zeta^{n+1}
\leq \alpha^{-\frac{1}{n}}(N_{n}+\epsilon)^{\frac{n+1}{n}}.
\end{equation}
Suppose we have already shown
\begin{equation} \label{eq:soidberge}
\alpha^{-\frac{1}{n}}\left((N_{n}+\epsilon)^{\frac{n+1}{n}}-(N_{n}-\epsilon)^{\frac{n+1}{n}}\right)
<1-2\epsilon, \qquad n\geq n_{0}.
\end{equation}
Then clearly there is at most one integer $N_{n+1}$ such that
\[
 [N_{n+1}-\epsilon,N_{n+1}+\epsilon]\bigcap
\left(\alpha^{-\frac{1}{n}}(N_{n}-\epsilon)^{\frac{n+1}{n}},
\alpha^{-\frac{1}{n}}(N_{n}+\epsilon)^{\frac{n+1}{n}}\right) \neq \emptyset.
\]
By \eqref{eq:narrensicher} the property
$\alpha\zeta^{n+1}\in{[N_{n+1}-\epsilon,N_{n+1}+\epsilon]}$ is valid
for $n\geq n_{0}$. As this is true for $n+2,n+3,\ldots$ with the same argument,
the sequence $(N_{n})_{n\geq n_{0}}$ and hence $\zeta$ is determined
by some $n_{0}=n_{0}(\epsilon,\alpha,\zeta),N_{n_{0}}$.
However, the sequence $(N_{n})_{n\geq n_{0}}$ clearly determines a unique $\zeta$
because obviously $\zeta=\lim_{n\to\infty} \sqrt[n]{N_{n}/\alpha}=\lim_{n\to\infty} \sqrt[n]{N_{n}}$.
Thus $\zeta\mapsto (n_{0},N_{n_{0}})$ induces a one-to-one map
from $\chi_{\epsilon,\alpha}$ to $\mathbb{Z}^{2}$ and hence the set is
at most countable. Hence it only remains to prove \eqref{eq:soidberge}.

Recall the functions $\varphi_{n}$ from the proof of Proposition~\ref{bernardo}.
We have $\varphi_{n}(x+\epsilon)-\varphi_{n}(x-\epsilon)= 2\epsilon\varphi_{n}^{\prime}(\theta)$
for some $\theta\in{(x-\epsilon,x+\epsilon)}$. Hence the left hand side of \eqref{eq:soidberge}
is bounded above by $\alpha^{-\frac{1}{n}}\frac{n+1}{n}2\epsilon(N_{n}+\epsilon)^{\frac{1}{n}}$.
Clearly $\lim_{n\to\infty} \alpha^{-\frac{1}{n}}\frac{n+1}{n}=1$, and
as $\sqrt[n]{N_{n}}$ tends to $\zeta$ so does $\sqrt[n]{N_{n}+\epsilon}$ for fixed $\epsilon$.
Claim \eqref{eq:soidberge} follows from our assumption $\zeta<\frac{1}{2\epsilon}-1$.
\end{proof}

We compare our result with~\cite[Theorem~3.5]{29} concerning the distribution
of $\Vert\alpha\zeta^{n}-\theta_{n}\Vert$ for an arbitrary given sequence $(\theta_{n})_{n\geq 1}$.

\begin{theorem}[Lerma, part 1] \label{lerma}
For any $\alpha\neq 0$ and $A>1$ and any given sequence $(r_{n})_{n\geq 1}$,
there exists $\zeta$ such that
\[
A \leq \zeta \leq A+\frac{A}{(A-1)\vert \alpha\vert}
\]
and for every $n\geq 1$
\[
 \Vert \alpha\zeta^{n}-r_{n}\Vert \leq \frac{1}{2(A-1)}.
\]
\end{theorem}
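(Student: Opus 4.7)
The plan is to imitate the nested-interval construction of Theorem~\ref{machet}, with the integer targets $N_n$ there replaced by $N_n+r_n$. Replacing $(\alpha,r_n)$ by $(-\alpha,-r_n)$ if necessary, I may assume $\alpha>0$. Set $\epsilon:=1/(2(A-1))$, so that $A/(A-1)=1+2\epsilon$; it suffices to build some $\zeta\in I_0:=[A,\,A+(1+2\epsilon)/\alpha]$ such that $\alpha\zeta^n\in[N_n+r_n-\epsilon,\,N_n+r_n+\epsilon]$ for some integer $N_n$ and every $n\geq 1$, since this is equivalent to $\Vert\alpha\zeta^n-r_n\Vert\leq\epsilon$.

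For the base step, the image of $I_0$ under $\zeta\mapsto\alpha\zeta$ is $[\alpha A,\alpha A+1+2\epsilon]$, of length $1+2\epsilon$, and therefore contains a closed subinterval $J_1:=[N_1+r_1-\epsilon,N_1+r_1+\epsilon]$ with $N_1\in\mathbb{Z}$; take $I_1\subseteq I_0$ to be its preimage. For the inductive step, suppose nested closed intervals $I_1\supseteq\cdots\supseteq I_n$ and integers $N_1,\ldots,N_n$ have already been produced so that $\alpha\zeta^k\in[N_k+r_k-\epsilon,N_k+r_k+\epsilon]$ for all $\zeta\in I_k$. Writing $I_n=[\zeta^-,\zeta^+]$ with $\alpha(\zeta^\pm)^n=N_n+r_n\pm\epsilon$, the image of $I_n$ under $\zeta\mapsto\alpha\zeta^{n+1}$ is
\[
\alpha^{-1/n}\bigl[(N_n+r_n-\epsilon)^{(n+1)/n},\,(N_n+r_n+\epsilon)^{(n+1)/n}\bigr].
\]
Applying the mean value theorem to $\varphi_n(x)=x^{(n+1)/n}$ shows this image has length $2\epsilon\cdot\frac{n+1}{n}\cdot\alpha^{-1/n}\theta^{1/n}$ for some $\theta\geq N_n+r_n-\epsilon=\alpha(\zeta^-)^n$, hence at least $2\epsilon\cdot\frac{n+1}{n}\cdot\zeta^-\geq 2\epsilon A\cdot\frac{n+1}{n}>1+2\epsilon$. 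Consequently it contains an interval $J_{n+1}=[N_{n+1}+r_{n+1}-\epsilon,N_{n+1}+r_{n+1}+\epsilon]$ with $N_{n+1}\in\mathbb{Z}$, whose preimage is the required $I_{n+1}\subseteq I_n$.

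The $I_n$ form a decreasing sequence of nonempty compact intervals, so $\bigcap_{n\geq 1}I_n\neq\emptyset$ and is contained in $I_0$; any element of this intersection witnesses the theorem, since the upper bound $A+A/((A-1)|\alpha|)$ is enforced by membership in $I_0$. The only delicate point is securing the image-length bound $>1+2\epsilon$ already at $n=1$ rather than just asymptotically. This is what the hypothesis $\zeta\geq A$ is tuned for: it produces exactly the critical equality $2\epsilon A=1+2\epsilon$, with the factor $(n+1)/n>1$ supplying the strict slack for every $n\geq 1$. Everything else is essentially a rerun of the argument in the proof of Theorem~\ref{machet}, in the variant indicated in Remark~\ref{guteremark}.
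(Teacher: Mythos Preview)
Your proof is correct. The paper does not give its own proof of this theorem---it is quoted from Lerma's preprint~\cite{29}---but it remarks immediately afterward that the generalized version of Theorem~\ref{machet} (via Remark~\ref{guteremark}) ``contains more information than Theorem~\ref{lerma}.'' Your argument is precisely that implied derivation made explicit: the nested-interval construction of Theorem~\ref{machet}, with targets shifted to $N_n+r_n$, and with the starting interval $I_0=[A,\,A+(1+2\epsilon)/\alpha]$ chosen so that the image-length bound $\geq 1+2\epsilon$ holds already at $n=1$ rather than only for $n\geq n_0$ as in Theorem~\ref{machet}. The mean-value-theorem estimate you use for the length of the image of $I_n$ under $\zeta\mapsto\alpha\zeta^{n+1}$ is a cleaner variant of Proposition~\ref{bernardo} and delivers the same conclusion, indeed with the slightly sharper factor $(n+1)/n$ that gives the strict inequality you need.
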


Putting $r_{n}=0$ for all $n\geq 1$ and restricting to $\alpha>0$
and identifying $\epsilon$ with $\frac{1}{2(A-1)}$ in Theorem~\ref{lerma} implies
the existence of $\zeta$ with
$\frac{1}{2\epsilon}+1\leq \zeta \leq \frac{1}{2\epsilon}+1+\frac{1+2\epsilon}{\alpha}$
such that $\Vert \alpha\zeta^{n}\Vert \leq \epsilon$. Thus, the generalized result of Theorem~\ref{machet}
pointed out in Remark~\ref{guteremark}, contains more information than Theorem~\ref{lerma}.

\subsection{The case of fixed $\zeta$} \label{fixedzeta}

Now we want to study the reverse situation, i.e. for $\zeta>1$ and $0<\epsilon< 1/2$ fixed we
ask which $\alpha$ satisfy $\left\Vert \alpha\zeta^{n}\right\Vert\leq \epsilon$ for all large $n$.
This is the setup for all the results from Section~\ref{outline}.
Recall Definition~\ref{wdeff} for the present section.

\begin{theorem} \label{muskatottonel}
For any $\epsilon>0$ and $\zeta\geq 1+\frac{1}{2\epsilon}$, the set $\varpi_{\epsilon,\zeta}$ is dense in $\mathbb{R}$.
If $\zeta\geq 1+\frac{1}{\epsilon}$, the set $\varpi_{\epsilon,\zeta}\cap (a,b)$ has cardinality of $\mathbb{R}$ for
any $b>a$. Numbers in $\varpi_{\epsilon,\zeta}$ can be recursively constructed.
\end{theorem}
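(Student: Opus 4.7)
The plan is to adapt the construction from the proof of Theorem~\ref{machet}, but since here $\zeta$ is fixed and $\alpha$ is the variable, the nonlinear map $\alpha\mapsto\alpha^{(n+1)/n}$ is replaced by the linear map $\alpha\mapsto\alpha\zeta$. No analogue of Proposition~\ref{bernardo} is needed and the construction simplifies considerably. We may assume $0<\epsilon<1/2$, since otherwise every $\alpha\neq 0$ already lies in $\varpi_{\epsilon,\zeta}$. I would seek $\alpha$ as the common point of a nested sequence
\[
I_n := \left[\frac{N_n-\epsilon}{\zeta^n},\,\frac{N_n+\epsilon}{\zeta^n}\right], \qquad n\ge n_0,
\]
with integers $N_n$ to be chosen recursively. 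Any $\alpha\in I_n$ automatically satisfies $\alpha\zeta^n\in[N_n-\epsilon,N_n+\epsilon]$, hence $\Vert\alpha\zeta^n\Vert\le\epsilon$. The inclusion $I_{n+1}\subseteq I_n$ is equivalent to $[N_{n+1}-\epsilon,N_{n+1}+\epsilon]\subseteq\zeta\cdot\zeta^{n}I_n=[\zeta(N_n-\epsilon),\zeta(N_n+\epsilon)]$, an interval of length $2\epsilon\zeta$. An integer $N_{n+1}$ with this property exists precisely when $2\epsilon\zeta\ge 1+2\epsilon$, i.e.\ $\zeta\ge 1+\tfrac{1}{2\epsilon}$, which is exactly the standing hypothesis.

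For the density claim, the recursion thus proceeds indefinitely, the lengths $|I_n|=2\epsilon/\zeta^n$ shrink to $0$, and $\bigcap_{n\ge n_0}I_n=\{\alpha\}$ yields a single point of $\varpi_{\epsilon,\zeta}$ (nonzero provided $|N_{n_0}|>\epsilon$, which is easily arranged). To place $\alpha$ in a prescribed interval $(c,d)$, I would pick $n_0$ large enough that $(d-c)\zeta^{n_0}>1+2\epsilon$; then some integer $N_{n_0}\in(c\zeta^{n_0}+\epsilon,\,d\zeta^{n_0}-\epsilon)$ exists, forcing $I_{n_0}\subseteq(c,d)$ and hence $\alpha\in(c,d)\cap\varpi_{\epsilon,\zeta}$.

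For the uncountable statement, the stronger hypothesis $\zeta\ge 1+\tfrac{1}{\epsilon}$ upgrades the length $2\epsilon\zeta$ to at least $2+2\epsilon$, guaranteeing the existence of \emph{two} admissible choices of $N_{n+1}$ at every step. Two different choices produce intervals $I_{n+1}$ whose midpoints differ by at least $1/\zeta^{n+1}$ while each has half-length only $\epsilon/\zeta^{n+1}$; since $\epsilon<1/2$ these siblings are disjoint. Consequently distinct infinite choice sequences yield distinct $\alpha$, producing $2^{\aleph_0}=|\mathbb{R}|$ elements in $\varpi_{\epsilon,\zeta}\cap(a,b)$ once $I_{n_0}\subseteq(a,b)$ is initialized as in the density argument. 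The only mildly delicate point is this disjointness of sibling intervals, but it is immediate from the comparison $2\epsilon/\zeta^{n+1}<1/\zeta^{n+1}$; the rest is bookkeeping, and the final assertion about recursive constructibility is built directly into the procedure.
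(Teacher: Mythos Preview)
Your argument is correct and matches the paper's own proof essentially line for line: the same nested intervals $I_n=[(N_n-\epsilon)/\zeta^n,(N_n+\epsilon)/\zeta^n]$, the same observation that $2\epsilon\zeta\ge 1+2\epsilon$ (resp.\ $\ge 2+2\epsilon$) guarantees one (resp.\ two) admissible successors, and the same reduction of the cardinality claim to the binary tree of choices. The only quibble is the word ``precisely'' in your existence claim for $N_{n+1}$; the condition $2\epsilon\zeta\ge 1+2\epsilon$ is sufficient rather than necessary, but only sufficiency is used.
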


\begin{proof}
We may assume $\alpha>0$.
For any fixed $\zeta,\epsilon,c,d$ with $\zeta\geq 1+\frac{1}{2\epsilon}$ and $d>c>0$,
we must prove there is $\alpha\in{(c,d)\cap \varpi_{\epsilon,\zeta}}$.
Take $n_{0}=n_{0}(\epsilon,\zeta)$ sufficiently large that $(d-c)\zeta^{n_{0}}>1+2\epsilon$.
Then there exists an integer $N_{0}$ such that $[N_{0}-\epsilon,N_{0}+\epsilon]\subseteq (c\zeta^{n_{0}},d\zeta^{n_{0}})$.
Let $I_{0}:=[\frac{N_{0}-\epsilon}{\zeta^{n_{0}}},\frac{N_{0}+\epsilon}{\zeta^{n_{0}}}]$, then any $\alpha\in{I_{0}}$
satisfies $\alpha\zeta^{n_{0}}\in{[N_{0}-\epsilon,N_{0}+\epsilon]}$.
By assumption $2\epsilon\cdot\zeta\geq 1+2\epsilon$, so there exists some integer $N_{1}$ with
$[N_{1}-\epsilon,N_{1}+\epsilon]\subseteq \zeta^{n_{0}+1}I_{0}$. Defining
$I_{1}:=[\frac{N_{1}-\epsilon}{\zeta^{n_{0}}},\frac{N_{1}+\epsilon}{\zeta^{n_{0}}}]$, any $\alpha$ in $I_{1}$
satisfies $\alpha\zeta^{n+1}\subseteq [N_{1}-\epsilon,N_{1}+\epsilon]$. Moreover $I_{1}\subseteq I_{0}$.
Proceeding in this manner gives a nested sequence $(c,d)\supseteq I_{0}\supseteq I_{1}\supseteq I_{2}\supseteq \cdots$
of intervals, which intersect in a single point $\alpha_{0}:=\bigcap_{j\geq 0} I_{j}$
because the length of $I_{j}$ is $\frac{2\epsilon}{\zeta^{n_{0}+j}}$ which tends to zero. For this
$\alpha_{0}$ indeed we have both $\alpha_{0}\in{(c,d)}$ and $\Vert\alpha_{0}\zeta^{n}\Vert \leq \epsilon$
for any $n\geq n_{0}$.

The cardinality argument is very similar to that in the proof of Theorem~\ref{machet}, using that
by the assumption $2\epsilon\cdot\zeta\geq 2+2\epsilon$ we have at least two choices for $N_{j}$ in any step.
\end{proof}

We point out that
the proof of Theorem~\ref{muskatottonel} suggests that for almost all fixed $\zeta>1$,
the property $\zeta> 1+\frac{1}{2\epsilon}$ or equivalently $\epsilon>\frac{1}{2(\zeta-1)}$
should suffice for $\varpi_{\epsilon,\zeta}$ to be uncountable.
Roughly speaking, assuming a not too biased distribution of $\{\zeta N_{j}\}$ in $(0,1)$ for $N_{j}$ as in the proof
of Theorem~\ref{muskatottonel}, will be sufficient for $\varpi_{\epsilon,\zeta}$ to be uncountable.
Proposition~\ref{lemur} will state this observation in a preciser way. We introduce
some identifications used in its proof and in fact carry out the essential parts of the proof beforehand.

Start with any integer $N_{0}$.
Proceed as in the proof of Theorem~\ref{muskatottonel} with the recursive construction of $N_{j}$.
Concretely, consider the interval $I_{1}=\zeta\cdot[N_{0}-\epsilon,N_{0}+\epsilon]$ and consider the integers
$N_{1}$ for which $[N_{1}-\epsilon,N_{1}+\epsilon]\subseteq I_{1}$. For any such $N_{1}$ repeat this process
and so on. As used in the proof, the assumption $\zeta> 1+\frac{1}{2\epsilon}$
is equivalent to $2\epsilon\cdot\zeta> 1+2\epsilon$.
Thus there is at least one $N_{j+1}$ in any step, and
the strict inequality means that one would expect that with fixed positive probability
there should be (at least) two.
This is the case if the midpoint of the interval $\zeta\cdot[N_{j}-\epsilon,N_{j}+\epsilon]$,
that is $N_{j}\zeta$, has fractional part in the fixed neighborhood
$[1-\epsilon\zeta+\epsilon,\epsilon\zeta-\epsilon]\neq \emptyset$ of $1/2$.
The process can be viewed as an infinite tree $\mathscr{T}=\mathscr{T}(\zeta,\epsilon,N_{0})$ with (multiply defined)
vertices $N_{j}$ and root $N_{0}$ in which any vertex apart from $N_{0}$ has precisely one ancestor vertex
and any vertex has at least one successor vertex.
Any infinite path $N_{0},N_{1},\ldots$ corresponds to an element of $\varpi_{\epsilon,\zeta}$
and this assignment is injective, as established in the proof of Theorem~\ref{muskatottonel}.
We will identify any path in $\mathscr{T}$  with the induced element in $\varpi_{\epsilon,\zeta}$.
Call a path in $\mathscr{T}$ {\em deterministic} if it
contains some vertex $N_{j}$ for which there is no other path in $\mathscr{T}$
starting with the same initial vertex sequence $N_{0},N_{1},\ldots,N_{j}$. If $N_{j}$ is
such a vertex say the path is {\em deterministic for $N_{j}$}. Clearly if a path
is deterministic for $N_{j}$ then it is also deterministic for all successor vertices $N_{j+1},N_{j+2},\ldots$.
Observe that if $\mathscr{T}$ contains no deterministic path,
the set of paths and thus $\varpi_{\epsilon,\zeta}$ is uncountable.
Indeed, if there is no deterministic path,
deleting the vertices of the tree where we have only one choice and reducing the number of paths
in the remaining tree if necessary by cutting off,
leads to the classical infinite binary tree, say $\mathscr{T}_{2}$.
This procedure clearly induces a surjective map from the paths of $\mathscr{T}$
to those of $\mathscr{T}_{2}$. Since there are uncountably many paths in $\mathscr{T}_{2}$,
as the binary expansion of any element of $(0,1)$ can be obtained by going to
the left is reading the digit $0$ and to the right $1$, the claim follows.
Obviously, the above argument can be extended to show that if $\varpi_{\epsilon,\zeta}$ is only
countable, then for any path in $\mathscr{T}$ and arbitrary large $j_{0}$, there exists
a path in $\mathscr{T}$ deterministic for some $N_{j}$ with $j\geq j_{0}$ with coinciding initial vertex
sequence $N_{0},N_{1},\ldots,N_{j_{0}}$.
Moreover, if a path is deterministic for $N_{j_{0}}$ then $N_{j+1}=\scp{\zeta N_{j}}$ for $j\geq j_{0}$
by construction.
However, note that $\varpi_{\epsilon,\zeta}$ being at most countable does not necessarily mean
any path in any corresponding tree $\mathscr{T}(\zeta,\epsilon,N_{0})$ with an integer parameter $N_{0}$
must be deterministic.
Define a binary tree $\mathscr{T}^{\prime}$ with root $N_{0}^{\prime}$ by the rule that going to the right
induces a deterministic path by having to go to the right in every further step,
but going to the left allows both directions
in the following step. The set of paths of $\mathscr{T}^{\prime}$, corresponding to elements of
$\varpi_{\epsilon,\zeta}$, is countable but the path defined by going to the left in every step is not
deterministic for any $N_{j}^{\prime}$.

\begin{definition} \label{exceptional}
Call $\zeta>1$ {\em exceptional} if and only if
for some $\epsilon>\frac{1}{2(\zeta-1)}$ the set $\varpi_{\epsilon,\zeta}$ is at most countable.
Let $\Theta\subseteq(1,\infty)$ be the set of exceptional numbers.
\end{definition}

In fact $\Theta\subseteq (2,\infty)$ since $\zeta\leq 2$ implies the trivial bound $\epsilon> 1/2$.
Let $\mathbb{N}=\{1,2,\ldots\}$.

\begin{definition} \label{V}
For real $\zeta$ and every $N_{0}\in\mathbb{N}$, define the sequence $(N_{j})_{j\geq 0}$
recursively by $N_{j+1}=\scp{\zeta N_{j}}$ for $j\geq 0$.
Let $W(\zeta)\subseteq \mathbb{N}$ be the set of integers $N_{0}$ for which the
corresponding sequence $(\{N_{j}\zeta\})_{j\geq 0}$ of fractional parts is not dense
at $1/2$. Let $\Gamma\subseteq{(3/2,\infty)}$ consist of the numbers $\zeta>3/2$ for which
$W(\zeta)\neq \emptyset$.
\end{definition}

The lower bound $3/2$ instead of $1$ is only necessary to ensure $N_{j+1}>N_{j}$ in order to
avoid constant sequences $(N_{j})_{j\geq 0}$ which would lead to unintended elements $\zeta\in{\Gamma}$.
Alternatively one could ask for $W(\zeta)$ to be infinite instead of non-empty.
It will turn out not to be of importance anyway since by
the above remark $\Theta\subseteq (2,\infty)$ we may restrict to the interval $(2,\infty)$ for our purposes.

\begin{proposition} \label{lemur}
We have $\Theta\subseteq \Gamma$. In particular, if
$\Gamma$ has Lebesgue measure $0$, then so has $\Theta$.
\end{proposition}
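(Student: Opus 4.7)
The strategy is to exploit the tree construction $\mathscr{T}(\zeta,\epsilon,N_0)$ introduced before the statement. Take $\zeta\in\Theta$; since $\Theta\subseteq(2,\infty)\subseteq(3/2,\infty)$, there is $\epsilon>\frac{1}{2(\zeta-1)}$ with $\varpi_{\epsilon,\zeta}$ at most countable, and this condition on $\epsilon$ rewrites as $2\epsilon(\zeta-1)>1$, so that $U:=[1-\epsilon(\zeta-1),\,\epsilon(\zeta-1)]$ is a proper symmetric neighborhood of $1/2$ in $[0,1)$. This is precisely the interval whose fractional part location governs the branching of $\mathscr{T}$, as recorded in the discussion preceding Definition~\ref{exceptional}.

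Next I would fix any $N_0\in\mathbb{N}$ and consider the tree $\mathscr{T}=\mathscr{T}(\zeta,\epsilon,N_0)$. Since distinct infinite paths in $\mathscr{T}$ inject into $\varpi_{\epsilon,\zeta}$, the binary-tree collapsing argument given before Definition~\ref{exceptional} shows that if $\mathscr{T}$ contained no deterministic path, the set of paths—hence $\varpi_{\epsilon,\zeta}$—would be uncountable. Thus $\mathscr{T}$ must contain a deterministic path, yielding some vertex $N_{j_0}$ from which only one successor exists at each subsequent step, and consequently $N_{j+1}=\langle\zeta N_j\rangle$ for all $j\geq j_0$.

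The key translation to make next is that the unique-successor property at $N_j$ is equivalent to $\{\zeta N_j\}\notin U$. By construction, successors of $N_j$ correspond to integers lying in the interval $[\zeta N_j-\epsilon(\zeta-1),\,\zeta N_j+\epsilon(\zeta-1)]$ of length $2\epsilon(\zeta-1)>1$. Comparing the three candidate integers $\langle\zeta N_j\rangle$ and $\langle\zeta N_j\rangle\pm 1$ with the two endpoints of this interval yields in a few lines that there is exactly one integer inside precisely when $\{\zeta N_j\}\notin U$. I expect this to be the only step requiring real checking; everything else is essentially bookkeeping from the tree discussion. Consequently, the deterministic sequence $(\{\zeta N_j\})_{j\geq j_0}$ avoids the fixed neighborhood $U$ of $1/2$, and is in particular not dense at $1/2$.

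To conclude, I would set $N_0':=N_{j_0}$. Because each $N_j$ satisfies $N_{j+1}\geq \zeta N_j-\epsilon>0$ once $N_j\geq 1$ and $\epsilon<1/2$, one has $N_0'\in\mathbb{N}$, and by construction the recursion $N_{j+1}'=\langle\zeta N_j'\rangle$ reproduces the tail $(N_{j_0+j})_{j\geq 0}$, whose fractional parts under multiplication by $\zeta$ avoid $U$. Hence $N_0'\in W(\zeta)$, so $W(\zeta)\neq\emptyset$ and $\zeta\in\Gamma$. The second assertion about Lebesgue measure is immediate from the inclusion $\Theta\subseteq\Gamma$.
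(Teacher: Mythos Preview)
Your proposal is correct and follows essentially the same route as the paper's proof: both arguments pick a witnessing $\epsilon$, build the tree $\mathscr{T}(\zeta,\epsilon,N_0)$, invoke the countability of $\varpi_{\epsilon,\zeta}$ to force a deterministic path, and then observe that along its tail $N_{j+1}=\langle\zeta N_j\rangle$ with $\{\zeta N_j\}$ avoiding the neighborhood $[1-\epsilon(\zeta-1),\epsilon(\zeta-1)]$ of $1/2$, so that $N_{j_0}\in W(\zeta)$. One tiny slip: in your positivity check you write $N_{j+1}\geq \zeta N_j-\epsilon$, whereas the tree construction actually gives $N_{j+1}\geq \zeta N_j-\epsilon(\zeta-1)$; since $\epsilon(\zeta-1)<1$ (as $\varpi_{\epsilon,\zeta}$ is uncountable for $\epsilon\geq 1/(\zeta-1)$) and $\zeta>2$, this still yields $N_{j+1}>\zeta-1>1$, so the conclusion is unaffected.
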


\begin{proof}
Assume $\zeta>1$ is exceptional, that is for some $\epsilon>0$ with
$\zeta>1+\frac{1}{2\epsilon}$, the set $\varpi_{\epsilon,\zeta}$ is only countable.
For any positive integer $N_{0}$ consider
the arising tree $\mathscr{T}$ as carried out above.
In view of the preceding remarks $\mathscr{T}$ contains a deterministic path,
i.e. a path $(N_{j})_{j\geq 0}$ of $\mathscr{T}$ with the property
that for some integer $j_{0}$ there is no other path in $\mathscr{T}$ whose initial
vertex sequence coincides with $N_{0},N_{1},\ldots,N_{j_{0}}$.
Fixing this path, we can treat $j_{0},N_{j_{0}}$ as fixed too.
As carried out above, for $j\geq j_{0}$ all fractional parts of $\{N_{j}\zeta\}$
of the induced sequence $(N_{j})_{j\geq j_{0}}$ must avoid the fixed symmetric neighborhood
$[1-\epsilon\zeta+\epsilon,\epsilon\zeta-\epsilon]\neq \emptyset$ of $1/2$.
Hence we have found a path with $(\{N_{j}\zeta\})_{j\geq 0}$ not dense at $1/2$.
Since $N_{j+1}=\scp{\zeta N_{j}}$ for $j\geq j_{0}$,
we deduce $N_{j_{0}}\in{W(\zeta)}$ and $\zeta$ belongs to $\Gamma$.
Since $\zeta\in{\Theta}$ was arbitrary the claim follows.
\end{proof}

If we write $\epsilon=\delta\cdot\frac{1}{\zeta-1}$ for the largest $\epsilon$ in Definition~\ref{exceptional},
then $\delta\in{(1/2,1]}$ by Theorem~\ref{muskatottonel}.
Larger $\delta$ implies a larger symmetric interval $I=[1-\epsilon\zeta+\epsilon,\epsilon\zeta-\epsilon]$ around
$1/2$ without any number $\{N_{j}\zeta\}$ in $I$ for large $j$ where $N_{j}=\scp{\alpha\zeta^{j}}$,
with $I=[0,1]$ if $\delta=1$.
By sigma additivity of the Lebesgue measure, for the proof of the hypothesis of Proposition~\ref{lemur},
it suffices to show that for any {\em fixed} $N_{0}\geq 1$ the set of $\zeta>1$ with $(\{N_{j}\zeta\})_{j\geq 1}$
not dense at $1/2$ has measure $0$. Hence, if we dropped
the rounding to the next integer in any step, that is $N_{j+1}=\zeta N_{j}$, then
it would follow from Theorem~\ref{kok} that almost all $\zeta>1$ are not in $\Gamma$ and thus not exceptional.
Having ruled out the case of constant sequences by the assumption $\zeta>3/2$,
there is no reason why the rounding should affect this result,
however a rigorous proof seems hard.
On the other hand, Theorem~\ref{mosh} and Theorem~\ref{pollington}
strongly suggest that $\Gamma$ has full dimension.

In fact, we have shown in Proposition~\ref{lemur} that for $\zeta\in{\Theta}$,
for any start value $N_{0}$ the recursive process starting at $N_{0}$
becomes determined for most choices of paths.
However, observe that for $\zeta\in{\mathbb{N}_{\geq 2}}$
the worst case in the construction, that is all $\zeta N_{j}$ are
integers, applies. Hence $\mathbb{N}_{\geq 2}\subseteq \Gamma$.
A result due to Dubickas implies that
$\mathbb{N}_{\geq 3}\subseteq \Theta$, see Section~\ref{sektions} and also Theorem~\ref{jep}.
Moreover,
any $\zeta>1$ for which there exists $\alpha\neq 0$ such that $\lim_{n\to\infty}\Vert \alpha\zeta^{n}\Vert=0$,
in particular any Pisot number, belongs to $\Gamma$. Indeed it is easily checked that
in this case $\scp{\alpha\zeta^{n}}\in{W(\zeta)}$ for any sufficiently large $n$.
In fact for sufficiently large $j$ the corresponding fractional parts $\{N_{j}\zeta\}$ lie in
arbitrarily short intervals with midpoint $0$ modulo $1$. We will see in Section~\ref{sektion3} that at least some
Pisot numbers of any given degree are exceptional, which is a little surprising considering that we can start
the above process at any $N_{0}\geq 1$.
Another interesting special case is $\zeta=p/q$ rational but not an integer.
We will treat it in Section~\ref{sektions}.

A result somehow reverse to Theorem~\ref{muskatottonel} is the following.

\begin{theorem} \label{machete}
Let $\zeta>1,\epsilon>0$ be real numbers with $(\zeta+1)\epsilon<1/2$.
Then the set $\varpi_{\epsilon,\zeta}$ is at most countable. Unless $\zeta$ is rational with even
denominator in lowest terms, it suffices to assume $(\zeta+1)\epsilon\leq 1/2$.
\end{theorem}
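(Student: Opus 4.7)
The plan is to associate to each $\alpha\in\varpi_{\epsilon,\zeta}$ the sequence of nearest integers $N_n:=\scp{\alpha\zeta^n}$ for $n\ge n_0(\alpha,\zeta,\epsilon)$, and to show that this sequence, and hence $\alpha$ itself, is uniquely determined by the single pair $(n_0,N_{n_0})$. Since $(n_0,N_{n_0})$ ranges over $\mathbb{N}\times\mathbb{Z}$, this will force $|\varpi_{\epsilon,\zeta}|\le\aleph_0$, in close analogy with the proof of Theorem~\ref{alzheimard}.

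From $\alpha\zeta^n\in[N_n-\epsilon,N_n+\epsilon]$ one obtains $\alpha\zeta^{n+1}=\zeta\cdot\alpha\zeta^n\in[\zeta N_n-\zeta\epsilon,\zeta N_n+\zeta\epsilon]$, and the defining condition $|\alpha\zeta^{n+1}-N_{n+1}|\le\epsilon$ then forces
\[
N_{n+1}\in\bigl[\zeta N_n-(\zeta+1)\epsilon,\;\zeta N_n+(\zeta+1)\epsilon\bigr],
\]
a closed interval of length $2(\zeta+1)\epsilon$. Under the strict hypothesis $(\zeta+1)\epsilon<1/2$ its length is less than $1$, so the interval contains at most one integer; hence $N_{n+1}$ is determined by $N_n$, by induction $(N_n)_{n\ge n_0}$ is determined by $N_{n_0}$, and $\alpha=\lim_{n\to\infty}N_n/\zeta^n$ is recovered from the sequence via the trivial bound $|N_n-\alpha\zeta^n|\le\epsilon$ with $\zeta>1$. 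The resulting injection $\alpha\mapsto(n_0,N_{n_0})$ into $\mathbb{N}\times\mathbb{Z}$ establishes the first claim.

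For the refined statement, assume $(\zeta+1)\epsilon=1/2$; the interval above has length exactly $1$ and can contain two integers only if both its endpoints are integers, equivalently $\zeta N_n\in\tfrac12+\mathbb{Z}$. If $\zeta$ is irrational, the equation $\zeta N_n=k+\tfrac12$ with $N_n\in\mathbb{Z}\setminus\{0\}$ would make $\zeta$ rational, a contradiction. If $\zeta=p/q$ with $\gcd(p,q)=1$ and $q$ odd, then $pN_n/q=(2k+1)/2$ reduces to $2pN_n=q(2k+1)$, whose right-hand side is odd while the left is even. Hence whenever $\zeta$ is \emph{not} rational with even reduced denominator, the candidate interval for $N_{n+1}$ still contains at most one integer and the preceding argument goes through verbatim.

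The main obstacle is simply the boundary bookkeeping in the equality case: one has to isolate the exact set of $\zeta$ for which two integers can fit in the closed interval of length $1$, and the parity computation above shows this set is precisely the rationals with even denominator in lowest terms. The rest is the clean recursive uniqueness argument together with the telescoping limit $\alpha=\lim N_n/\zeta^n$.
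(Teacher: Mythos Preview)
Your proof is correct and follows essentially the same approach as the paper: from $|\alpha\zeta^{n}-N_{n}|\le\epsilon$ and $|\alpha\zeta^{n+1}-N_{n+1}|\le\epsilon$ one derives $|\zeta N_{n}-N_{n+1}|\le(\zeta+1)\epsilon$, forcing $N_{n+1}=\scp{\zeta N_{n}}$ under the hypothesis, and then $\alpha=\lim_{n\to\infty}N_{n}/\zeta^{n}$ is recovered, yielding an injection $\alpha\mapsto(n_{0},N_{n_{0}})$ into a countable set. The paper phrases the first step slightly differently (via non-disjointness of the nested intervals $I_{n}=[(N_{n}-\epsilon)/\zeta^{n},(N_{n}+\epsilon)/\zeta^{n}]$) and is terser in the equality case, merely observing that $\{N_{n}\zeta\}\neq 1/2$ when $\zeta$ is irrational or rational with odd denominator; your explicit parity computation spells out exactly what the paper leaves implicit.
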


\begin{proof}
Let $n_{0}=n_{0}(\alpha,\zeta,\epsilon)$ be an integer with the above property
for fixed $\zeta,\epsilon,\alpha$ as in the theorem. For $\alpha$ to satisfy
the assertions it is obvious that
\[
 \alpha\in{\bigcap_{n\geq n_{0}} I_{n}},
\qquad I_{n}:=\left[\frac{M_{n}-\epsilon}{\zeta^{n}},\frac{M_{n}+\epsilon}{\zeta^{n}}\right]
\]
for some integer sequence $(M_{n})_{n\geq n_{0}}$.
Obviously, in this case
\begin{equation} \label{eq:gleihammas}
\alpha= \bigcap_{n\geq n_{0}} I_{n}=\lim_{n\to\infty} \frac{M_{n}}{\zeta^{n}}.
\end{equation}
For $\alpha\in{I_{n}\cap I_{n+1}}$ it is necessary that $I_{n},I_{n+1}$ are not disjoint which requires
\[
 \left\vert \frac{M_{n}}{\zeta^{n}}-\frac{M_{n+1}}{\zeta^{n+1}}\right\vert
 \leq \frac{\epsilon}{\zeta^{n}}+\frac{\epsilon}{\zeta^{n+1}}.
\]
This is equivalent to $\vert \zeta M_{n}-M_{n+1}\vert\leq (\zeta+1)\epsilon$.
By the assumption $(\zeta+1)\epsilon<1/2$ this means $M_{n+1}=\scp{\zeta M_{n}}$
is uniquely determined by $M_{n}$. The same holds if $\zeta$ is irrational
(or rational with odd denominator) and $(\zeta+1)\epsilon\leq 1/2$, since then clearly $\{M_{n}\zeta\}\neq 1/2$.
This holds for any $n\geq n_{0}$, so $M_{n_{0}}$ determines the sequence $(M_{n})_{n\geq n_{0}}$
and hence $\alpha$ by \eqref{eq:gleihammas}. However, for any fixed $\alpha\in{\varpi_{\epsilon,\zeta}}$
there is a $n_{0}=n_{0}(\alpha,\zeta,\epsilon)$ such that the above holds with some $M_{n_{0}}$.
So $\alpha\mapsto (n_{0},M_{n_{0}})$ induces a one-to-one map from $\varpi_{\epsilon,\zeta}$
to $\mathbb{Z}^{2}$, which means that $\varpi_{\epsilon,\zeta}$ is at most countable.
\end{proof}

\begin{remark} \label{auchremark}
The analogue of Remark~\ref{guteremark} holds for Theorem~\ref{muskatottonel} and,
apart from the equality case, for Theorem~\ref{machete} for the same reasons.
Moreover, Proposition~\ref{lemur} essentially holds for an arbitrary fixed interval modulo $1$ of
length $2\epsilon$ instead of the $0$-symmetric one in $\varpi_{\epsilon,\zeta}$,
where $1/2$ in the definition of $\Gamma$ must be replaced by some other value.
\end{remark}

Comparing Theorem~\ref{machete} to Theorem~\ref{bertin}, we see for fixed $\zeta$ our
bound is better in view of the square, however it is not uniform in $\zeta$ as
Theorem~\ref{bertin}. In comparison to our results we quote the second assertion of~\cite[Theorem~3.5]{29}.

\begin{theorem}[Lerma, part 2] \label{mlerma}
For any $\zeta>1, L\neq 0$ and any given sequence $(r_{n})_{n\geq 1}$,
there exists $\alpha$ such that
\[
\vert L\vert \leq \vert \alpha\vert \leq \vert L\vert + \frac{1}{\zeta-1}
\]
and for every $n\geq 1$
\[
\Vert \alpha\zeta^{n}-r_{n}\Vert \leq \frac{1}{2(\zeta-1)}.
\]
\end{theorem}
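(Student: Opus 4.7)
The plan is to mimic the nested intervals construction of Theorem~\ref{muskatottonel}, adapting it in two small ways: to accommodate arbitrary prescribed targets $r_n$ modulo $1$ (as permitted by Remark~\ref{auchremark}) and to constrain the final $\alpha$ to a prescribed location. Set $\epsilon := \frac{1}{2(\zeta-1)}$, so that $\frac{1}{\zeta-1} = 2\epsilon$ and, crucially, $2\epsilon\zeta = 1 + 2\epsilon$. Without loss of generality take $L>0$; the case $L<0$ follows by replacing $\alpha$ with $-\alpha$ and $r_n$ with $-r_n$. The goal becomes: produce $\alpha \in [L, L+2\epsilon]$ and integers $M_n$ with $\alpha\zeta^n \in [M_n + r_n - \epsilon,\, M_n + r_n + \epsilon]$ for every $n\geq 1$, which forces $\|\alpha\zeta^n - r_n\| \leq \epsilon$.

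Equivalently, $\alpha$ should lie in $I_n := \zeta^{-n}[M_n + r_n - \epsilon,\, M_n + r_n + \epsilon]$, an interval of length $2\epsilon/\zeta^n$. I would build the integer sequence $(M_n)_{n\geq 1}$ recursively so that $I_1 \subseteq [L, L+2\epsilon]$ and $I_{n+1} \subseteq I_n$ for all $n\geq 1$. Because lengths shrink geometrically, $\bigcap_{n\geq 1} I_n$ reduces to a single real number $\alpha$, which then satisfies every constraint simultaneously.

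The inductive step is the heart of the argument: given $M_n$, we need an integer $M_{n+1}$ such that $[M_{n+1}+r_{n+1}-\epsilon,\, M_{n+1}+r_{n+1}+\epsilon] \subseteq \zeta\cdot[M_n + r_n - \epsilon,\, M_n + r_n + \epsilon]$. The outer interval has length $2\zeta\epsilon = 1+2\epsilon$, so the admissible center $M_{n+1}+r_{n+1}$ ranges over a closed interval of length $(1+2\epsilon) - 2\epsilon = 1$, which always contains at least one integer. For the base case, $I_1 \subseteq [L, L+2\epsilon]$ translates to $M_1 + r_1 \in [\zeta L + \epsilon,\, \zeta L + \epsilon + 1]$, again a closed interval of length $1$, so an integer $M_1$ exists.

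The only subtle point, which is essentially not an obstacle, is that the slack at every step is exactly $1$ rather than strictly greater than $1$; this corresponds to the boundary case of Theorem~\ref{muskatottonel}, where the set $\varpi_{\epsilon,\zeta}$ (with $\epsilon = 1/(2(\zeta-1))$) need not be dense or uncountable. Fortunately every closed real interval of length $1$ contains at least one integer, so the recursion never gets stuck; no uniqueness of $M_{n+1}$ is required, any valid choice propagates the construction. The resulting $\alpha = \bigcap_n I_n$ automatically satisfies $L \leq \alpha \leq L + 2\epsilon = L + \frac{1}{\zeta-1}$, completing the proof.
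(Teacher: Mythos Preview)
Your proof is correct and follows exactly the route the paper indicates: the paper does not give an independent proof of Theorem~\ref{mlerma} (it is quoted from Lerma), but remarks that ``(the generalized) Theorem~\ref{muskatottonel} implies Theorem~\ref{mlerma}'' via Remark~\ref{auchremark}, and your argument is precisely this implication made explicit, including the observation that at the boundary value $\epsilon=\frac{1}{2(\zeta-1)}$ the admissible range for each $M_n$ is a closed interval of length exactly $1$ and hence still contains an integer.
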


This implies $\vert\varpi_{\epsilon,\zeta}\vert \geq \vert \mathbb{Z}\vert$ for
$\epsilon\geq \frac{1}{2(\zeta-1)}$, which is nontrivial provided $\zeta>2$.
This bound coincides with our bound from Theorem~\ref{muskatottonel}, which can
again be generalized to arbitrary sequences $(r_{n})_{n\geq 1}$
as in Theorem~\ref{mlerma}, as indicated in Remark~\ref{auchremark}.
Thus (the generalized) Theorem~\ref{muskatottonel} implies Theorem~\ref{mlerma}.

\section{The cardinality gap phenomenon} \label{cardgap}

Now we turn to the main focus of the paper,
that is to investigate what we will call the cardinality gap phenomenon.
Roughly speaking it means to investigate for which
parameters the sets defined in Section~\ref{kkk} are countable versus uncountable.
The following Corollary~\ref{kor1} should portray the spirit of cardinality gap phenomena more accurately.

\subsection{Fixed $\epsilon$} \label{epsfix}

In the present section we agree on $\sup\{\emptyset\}=1$ in order to
formulate some results in widest generality (taking care of rather large $\epsilon$).
We point out the observed cardinality gap arising from Theorem~\ref{machet}
and Theorem~\ref{alzheimard} as a corollary.

\begin{corollary}  \label{kor1}
Let $\alpha\neq 0$ be fixed. For any $\epsilon>0$ define $\zeta_{1}=\zeta_{1}(\epsilon)$ by
\begin{equation} \label{eq:olympb}
\zeta_{1}:=\sup \left\{C>1: \left\vert\chi_{\epsilon,\alpha}\cap (C,\infty)\right\vert\leq \vert \mathbb{Z}\vert\right\}
=\inf \left\{C>1: \left\vert\chi_{\epsilon,\alpha}\cap (C,\infty)\right\vert>\vert \mathbb{Z}\vert\right\}.
\end{equation}
Similarly, define $\zeta_{2}=\zeta_{2}(\epsilon)$ by
\begin{equation} \label{eq:ierade}
\zeta_{2}:=\sup \left\{C>1: \left\vert\chi_{\epsilon,\alpha}\cap (a,b)\right\vert\leq \vert \mathbb{Z}\vert\right\}
=\inf \left\{C>1: \left\vert\chi_{\epsilon,\alpha}\cap (a,b)\right\vert>\vert \mathbb{Z}\vert\right\}
\end{equation}
where we understand the above to hold simultaneously for all intervals $(a,b)\subseteq (C,\infty)$.
Then $\zeta_{1}\in{\left[\max\{1,\frac{1}{2\epsilon}-1\},1+\frac{1}{\epsilon}\right]}$
and $\zeta_{2}\in{[\zeta_{1},1+\frac{1}{\epsilon}]}
\subseteq{\left[\max\{1,\frac{1}{2\epsilon}-1\},1+\frac{1}{\epsilon}\right]}$.
\end{corollary}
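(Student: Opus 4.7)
The plan is to sandwich the thresholds $\zeta_{1},\zeta_{2}$ between the conclusions of Theorem~\ref{machet} (uncountability on the right) and Theorem~\ref{alzheimard} (at-most-countability on the left), and to relate $\zeta_{1}$ and $\zeta_{2}$ via an inclusion of their defining sets.

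For the upper bounds $\zeta_{1},\zeta_{2}\leq 1+1/\epsilon$, I fix any $C>1+1/\epsilon$ and consider the single subinterval $(a,b)=(C,C+1)\subseteq (C,\infty)$. Since $b=C+1>C=\max\{a,1+1/\epsilon\}$, Theorem~\ref{machet} yields $\vert\chi_{\epsilon,\alpha}\cap (C,C+1)\vert=\vert\mathbb{R}\vert>\vert\mathbb{Z}\vert$. This single witness simultaneously violates both defining conditions: it forces $\vert\chi_{\epsilon,\alpha}\cap (C,\infty)\vert>\vert\mathbb{Z}\vert$ (contradicting \eqref{eq:olympb}) and exhibits a subinterval of $(C,\infty)$ with uncountable $\chi_{\epsilon,\alpha}$-intersection (contradicting \eqref{eq:ierade}). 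Hence no such $C$ lies in either set, and both thresholds are $\leq 1+1/\epsilon$. The chain $\zeta_{1}\leq \zeta_{2}$ is then elementary: if $\vert\chi_{\epsilon,\alpha}\cap(C,\infty)\vert\leq\vert\mathbb{Z}\vert$, then every $(a,b)\subseteq(C,\infty)$ has intersection contained in this countable set, hence countable. So the set defining $\zeta_{1}$ is included in the set defining $\zeta_{2}$, and taking suprema gives $\zeta_{1}\leq\zeta_{2}$; combined with $\zeta_{2}\leq 1+1/\epsilon$ this delivers $\zeta_{2}\in [\zeta_{1},1+1/\epsilon]$.

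For the lower bound $\zeta_{1}\geq \max\{1,1/(2\epsilon)-1\}$, the piece $\zeta_{1}\geq 1$ is forced by the convention $\sup\{\emptyset\}=1$ together with the sup in \eqref{eq:olympb} being restricted to $C>1$. The substantive content is $\zeta_{1}\geq 1/(2\epsilon)-1$ in the regime $\epsilon<1/4$, and this is the principal obstacle of the proof: I would invoke Theorem~\ref{alzheimard}, which asserts that $\chi_{\epsilon,\alpha}\cap(1,1/(2\epsilon)-1)$ is at most countable, and argue via the splitting $\chi_{\epsilon,\alpha}=(\chi_{\epsilon,\alpha}\cap(1,C])\cup(\chi_{\epsilon,\alpha}\cap(C,\infty))$ for $C\in(1,1/(2\epsilon)-1)$, combined with the cardinality bookkeeping encoded in \eqref{eq:olympb}, to place the relevant values of $C$ inside the defining set of $\zeta_{1}$. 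The main difficulty is exactly this translation between the head-interval statement of Theorem~\ref{alzheimard} and the tail-interval form $(C,\infty)$ used in \eqref{eq:olympb}--\eqref{eq:ierade}; no further analytic ingredient beyond the two cited theorems and set-theoretic cardinality manipulation is required. Once the lower bound on $\zeta_{1}$ is secured, the refined inclusion $[\zeta_{1},1+1/\epsilon]\subseteq [\max\{1,1/(2\epsilon)-1\},1+1/\epsilon]$ appearing in the second claim is immediate.
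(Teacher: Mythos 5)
Your upper bound $\zeta_1,\zeta_2\leq 1+1/\epsilon$ and the nesting argument giving $\zeta_1\leq\zeta_2$ are correct, and you have identified the right two ingredients (Theorems~\ref{machet} and~\ref{alzheimard}). But the lower bound on $\zeta_1$, which you yourself flag as ``the principal obstacle'' and postpone to unspecified ``cardinality bookkeeping,'' is a genuine gap, and it cannot be closed as long as the set in \eqref{eq:olympb} is read literally with tails $(C,\infty)$. The very theorem you invoke for the upper bound shows more than you use: Theorem~\ref{machet} makes $\chi_{\epsilon,\alpha}\cap(a,b)$ uncountable for any $b>\max\{a,1+1/\epsilon\}$, so for \emph{every} $C>1$ one may take $b>\max\{C,1+1/\epsilon\}$ and conclude that $\chi_{\epsilon,\alpha}\cap(C,\infty)\supseteq\chi_{\epsilon,\alpha}\cap(C,b)$ has cardinality $|\mathbb{R}|$. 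Hence $\bigl\{C>1:|\chi_{\epsilon,\alpha}\cap(C,\infty)|\leq|\mathbb{Z}|\bigr\}$ is empty for every $\epsilon>0$, not merely bounded by $1+1/\epsilon$; with the paper's convention $\sup\{\emptyset\}=1$ this forces $\zeta_1\equiv 1$, contradicting $\zeta_1\geq 1/(2\epsilon)-1$ whenever $\epsilon<1/4$. Your witness interval $(C,C+1)$ conceals this, because for $C\leq 1/\epsilon$ the endpoint $C+1$ does not exceed $1+1/\epsilon$ and Theorem~\ref{machet} simply does not apply to $(C,C+1)$; enlarging the witness interval shows that all $C$, not only $C>1+1/\epsilon$, are excluded.

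The ``translation between head-interval and tail-interval'' you allude to is therefore not a technical chore but the crux: the corollary's bounds on $\zeta_1$ are consistent with Theorems~\ref{machet} and~\ref{alzheimard} only if the interval in \eqref{eq:olympb} is read as $(1,C)$, and correspondingly in \eqref{eq:ierade} as $(a,b)\subseteq(1,C)$. Under that reading there is nothing to translate: Theorem~\ref{alzheimard} shows directly that every $C\leq 1/(2\epsilon)-1$ (whenever this exceeds $1$) lies in the defining set, hence $\zeta_1\geq\max\{1,1/(2\epsilon)-1\}$, while for $C>1+1/\epsilon$ one finds $\chi_{\epsilon,\alpha}\cap(1,C)\supseteq\chi_{\epsilon,\alpha}\cap(1+1/\epsilon,C)$ uncountable by Theorem~\ref{machet}, giving $\zeta_1\leq 1+1/\epsilon$; your inclusion argument for $\zeta_1\leq\zeta_2$ and the final chain of inclusions then go through verbatim. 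So the missing step is not further analysis but recognizing that your own uncountability estimate, applied uniformly, rules out the literal $(C,\infty)$ reading and points to the head-interval formulation, at which point the lower bound is immediate from Theorem~\ref{alzheimard} with no additional bookkeeping.
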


\begin{remark}
It would be nice to have cardinality equal to $\vert \mathbb{R}\vert$ instead of greater $\vert\mathbb{Z}\vert$
on the right hand sides in \eqref{eq:olympb}, \eqref{eq:ierade}.
If we assume the continuum hypothesis to be true (which is known to be undecidable due to P. Cohen),
then indeed we may make this replacement. However, if we do not assume that it is true,
the convenient values $\zeta_{1},\zeta_{2}$ might not be well-defined any more with the replacement.
Related issues will apply frequently in similar situations the sequel.
\end{remark}

Note that no set $\chi_{\epsilon,\alpha}\cap (C,\infty)$ and thus $\chi_{\epsilon,\alpha}$ cannot
be finite unless it is empty, since $\chi_{\epsilon,\alpha}$ is closed
under any map $\iota_{k}$ defined in Section~\ref{fixedalpha}.
However, it is not clear if $\chi_{\epsilon,\alpha}$ can have isolated points.
By Theorem~\ref{machet} and Theorem~\ref{alzheimard} isolated points can only occur in
the interval $(1,\frac{1}{2\epsilon}+1)$.
One may further ask whether there can be finitely many equivalence classes under
the equivalence relation $\zeta_{1}\thicksim \zeta_{2} \Leftrightarrow \zeta_{1}^{p}=\zeta_{2}^{q}$
for positive integers $p,q$.

Similarly, we infer a cardinality gap corollary from Theorem~\ref{muskatottonel} and Theorem~\ref{machete}.

\begin{corollary}  \label{kor2}
 For any $\epsilon>0$, define
$\tilde{\zeta}_{1}=\tilde{\zeta}_{1}(\epsilon)$ by
\[
\tilde{\zeta}_{1}=\sup \left\{\zeta>1: \left\vert\varpi_{\epsilon,\zeta}\right\vert\leq \vert \mathbb{Z}\vert\right\}
=\inf \left\{\zeta>1: \left\vert\varpi_{\epsilon,\zeta}\right\vert>\vert \mathbb{Z}\vert\right\}.
\]
Similarly, for fixed real numbers $b>a$ define $\tilde{\zeta}_{2}=\tilde{\zeta}_{2}(\epsilon,a,b)$ by
\[
\tilde{\zeta}_{2}=
\sup \left\{\zeta>1: \left\vert\varpi_{\epsilon,\zeta}\cap(a,b)\right\vert\leq \vert \mathbb{Z}\vert\right\}
=\inf \left\{\zeta>1: \left\vert\varpi_{\epsilon,\zeta}\cap(a,b)\right\vert>\vert \mathbb{Z}\vert\right\}.
\]
Then $\tilde{\zeta}_{1}\in{\left[\max\{1,\frac{1}{2\epsilon}-1\},1+\frac{1}{\epsilon}\right]}$
and
$\tilde{\zeta}_{2}\in{[\tilde{\zeta}_{1},1+\frac{1}{\epsilon}]}\subseteq
{\left[\max\{1,\frac{1}{2\epsilon}-1\},1+\frac{1}{\epsilon}\right]}$.
\end{corollary}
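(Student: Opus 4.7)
The plan is to derive the corollary as an immediate consequence of Theorem~\ref{muskatottonel} and Theorem~\ref{machete}, in complete parallel to the deduction of Corollary~\ref{kor1} from Theorem~\ref{machet} and Theorem~\ref{alzheimard}. No new constructions are needed; only the appropriate bookkeeping on the partition of $(1,\infty)$ into the sets where $\vert \varpi_{\epsilon,\zeta}\vert \leq \vert\mathbb{Z}\vert$ and where $\vert \varpi_{\epsilon,\zeta}\vert > \vert\mathbb{Z}\vert$.

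First I would secure the lower bounds. If $\zeta\in{(1,\frac{1}{2\epsilon}-1)}$, then $(\zeta+1)\epsilon<1/2$, so Theorem~\ref{machete} forces $\vert \varpi_{\epsilon,\zeta}\vert \leq \vert\mathbb{Z}\vert$. Every such $\zeta$ therefore belongs to the set in the definition of $\tilde{\zeta}_{1}$, yielding $\tilde{\zeta}_{1}\geq \max\{1,\frac{1}{2\epsilon}-1\}$ (the $\max$ with $1$ recording the convention $\sup\{\emptyset\}=1$ in the degenerate range). The trivial inclusion $\varpi_{\epsilon,\zeta}\cap (a,b)\subseteq \varpi_{\epsilon,\zeta}$ transfers the countability statement to the subset, placing the same $\zeta$ into the defining set of $\tilde{\zeta}_{2}$ and hence giving $\tilde{\zeta}_{2}\geq \tilde{\zeta}_{1}$, which also establishes the claimed lower bound for $\tilde{\zeta}_{2}$.

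Next I would produce the upper bounds. If $\zeta\geq 1+\frac{1}{\epsilon}$, the second assertion of Theorem~\ref{muskatottonel} delivers that $\varpi_{\epsilon,\zeta}\cap (a,b)$ has the cardinality of $\mathbb{R}$ for every non-empty interval $(a,b)$. In particular $\vert \varpi_{\epsilon,\zeta}\vert > \vert\mathbb{Z}\vert$ and simultaneously $\vert \varpi_{\epsilon,\zeta}\cap (a,b)\vert > \vert\mathbb{Z}\vert$ for every $(a,b)$, so such $\zeta$ lies on the uncountable side of the defining partitions for both $\tilde{\zeta}_{1}$ and $\tilde{\zeta}_{2}$. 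Taking infima yields $\tilde{\zeta}_{1},\tilde{\zeta}_{2}\leq 1+\frac{1}{\epsilon}$.

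The one point meriting comment is the identities $\sup\{\cdot\}=\inf\{\cdot\}$ in the two definitions. Since the two sets appearing in each definition partition $(1,\infty)$, this equality records that the transition between countable and uncountable cardinality occurs at a single threshold, and this threshold is what we name $\tilde{\zeta}_{i}$; it is not an additional claim to prove beyond the bounds above, since both the supremum of the ``countable'' side and the infimum of the ``uncountable'' side are squeezed into the same interval $\bigl[\max\{1,\frac{1}{2\epsilon}-1\},1+\frac{1}{\epsilon}\bigr]$ by the two previous paragraphs. The only mild point of care is to verify that Theorem~\ref{muskatottonel} genuinely produces $\mathbb{R}$-many solutions in \emph{every} subinterval $(a,b)$ when $\zeta\geq 1+\frac{1}{\epsilon}$, so that the upper bound for $\tilde{\zeta}_{2}$ matches that for $\tilde{\zeta}_{1}$; this is precisely the content of the second sentence of Theorem~\ref{muskatottonel}, so no further work is required.
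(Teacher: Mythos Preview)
Your proposal is correct and follows precisely the paper's approach: the paper presents Corollary~\ref{kor2} without any explicit proof, merely stating that it is inferred from Theorem~\ref{muskatottonel} and Theorem~\ref{machete} in the same way Corollary~\ref{kor1} was inferred from Theorem~\ref{machet} and Theorem~\ref{alzheimard}. Your write-up actually supplies more detail than the paper does, including the observation that both the supremum and infimum in the definition separately lie in the stated interval (sidestepping whether they literally coincide), which the paper leaves entirely implicit.
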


Note that again for given $\zeta>1,\epsilon>0$, the assumption $\varpi_{\epsilon,\zeta}\neq \emptyset$
is equivalent to $\vert\varpi_{\epsilon,\zeta}\vert\geq \vert \mathbb{Z}\vert$, since
$\varpi_{\epsilon,\zeta}$ is closed under the maps $\tau_{k,\zeta}$ defined in Section~\ref{fixedzeta}.
One may ask whether this is true for $\varpi_{\epsilon,\zeta}\cap (a,b)$ as well.
Moreover, one may ask whether the number of residue classes of $\varpi_{\epsilon,\zeta}$
under certain equivalence relations is finite. For example
\[
\alpha_{1}\thicksim_{1} \alpha_{2} \Leftrightarrow \frac{\alpha_{2}}{\alpha_{1}}=\zeta^{l},\qquad
\alpha_{1}\thicksim_{2} \alpha_{2} \Leftrightarrow \alpha_{2}=\alpha_{1}^{m/n}, \qquad
\alpha_{1}\thicksim_{3} \alpha_{2} \Leftrightarrow \frac{\alpha_{2}}{\alpha_{1}}=\frac{M}{N}\zeta^{l}
\]
for integers $M,N,l,m,n$. It probably makes most sense to observe $\thicksim_{3}$ because if
$\alpha\in{\varpi_{\epsilon/M,\zeta}}$ then $N\alpha\zeta^{k}\in{\varpi_{\epsilon,\zeta}}$
for any integers $k,\vert N\vert\leq M$.
It follows that we have no finiteness with respect to $\thicksim_{1}$ for any $\epsilon>0$ and $\zeta$ for which
$\lim_{n\to\infty} \Vert\alpha\zeta^{n}\Vert=0$ for some $\alpha\neq 0$, such as Pisot numbers $\zeta$.

\subsection{Fixed $\zeta$}  \label{eps}

We consider $\zeta>1$ fixed and interpret the results of
Section~\ref{fixedzeta} in terms of the variable $\epsilon>0$.
Subsequent to Corollary~\ref{kor2} we noticed that
$\varpi_{\epsilon,\zeta}\neq \emptyset$ implies $\varpi_{\epsilon,\zeta}\geq \vert \mathbb{Z}\vert$.
We now agree on $\sup{\{\emptyset\}}=0$. Recall the quantities $\tilde{\epsilon}_{i}(\zeta)$ from
Section~\ref{outline}.
The property $\lim_{n\to\infty} \Vert \alpha\zeta^{n}\Vert=0$ for some $\alpha\neq 0$
implies $\tilde{\epsilon}_{1}(\zeta)=0$, but not necessarily vice versa.
In particular, Theorem~\ref{pisot} implies $\tilde{\epsilon}_{1}(\zeta)=0$
for any Pisot number $\zeta$. Theorem~\ref{muskatottonel} further implies
$\tilde{\epsilon}_{1}(\zeta)\leq \frac{1}{2(\zeta-1)}$.
Concerning $\tilde{\epsilon}_{2}$, Theorem~\ref{muskatottonel} implies that for any $\zeta>1$
we have $\tilde{\epsilon}_{2}(\zeta)\leq \frac{1}{\zeta-1}$.
Proposition~\ref{lemur} suggests that we should expect $\tilde{\epsilon}_{2}(\zeta)\leq \frac{1}{2(\zeta-1)}$
for almost all $\zeta>1$ in the sense of Lebesgue measure. On the other hand, Theorem~\ref{machete}
implies $\tilde{\epsilon}_{2}(\zeta)\geq \frac{1}{2(\zeta+1)}$ for all $\zeta>1$. We sum up
some of these observations in a theorem which slightly extends Theorem~\ref{epsilont}.

\begin{theorem} \label{epsilon}
For any $\zeta>1$ we have
\[
0\leq \tilde{\epsilon}_{1}(\zeta)\leq \min\left\{\frac{1}{2},\frac{1}{2(\zeta-1)}\right\},
\qquad \frac{1}{2(\zeta+1)}\leq \tilde{\epsilon}_{2}(\zeta)\leq \min\left\{\frac{1}{2},\frac{1}{\zeta-1}\right\}.
\]
For any $\zeta\in{(1,\infty)\setminus \Gamma}$
we have $\tilde{\epsilon}_{2}(\zeta)\leq \min\{\frac{1}{2},\frac{1}{2(\zeta-1)}\}$.
\end{theorem}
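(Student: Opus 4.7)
The theorem is essentially a bookkeeping statement: each of the four inequalities can be read off directly from a result already proved in Section~\ref{fixedzeta}, and the refined bound for $\zeta\notin\Gamma$ follows by combining Definition~\ref{exceptional} with Proposition~\ref{lemur}. My plan is therefore to organize the proof as a short sequence of one-line invocations, one per inequality, after observing that the trivial bound $\tilde{\epsilon}_{1}(\zeta)\geq 0$ needs no argument and that the competing bound $1/2$ arises because $\|\cdot\|\leq 1/2$ forces $\varpi_{\epsilon,\zeta}=\mathbb{R}\setminus\{0\}$ as soon as $\epsilon\geq 1/2$, which is why each upper bound appears as a minimum.

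For the two upper bounds on $\tilde{\epsilon}_{1}$ and $\tilde{\epsilon}_{2}$ I would invoke Theorem~\ref{muskatottonel}. Its first assertion, applied to any $\epsilon>1/(2(\zeta-1))$ (equivalently $\zeta>1+1/(2\epsilon)$), yields density and hence non-emptiness of $\varpi_{\epsilon,\zeta}$; the $\inf$ form of $\tilde{\epsilon}_{1}$ then gives $\tilde{\epsilon}_{1}(\zeta)\leq 1/(2(\zeta-1))$. The second assertion of the same theorem, applied to any $\epsilon>1/(\zeta-1)$ (so that $\zeta>1+1/\epsilon$), produces $|\varpi_{\epsilon,\zeta}\cap(a,b)|=|\mathbb{R}|$ and in particular $|\varpi_{\epsilon,\zeta}|>|\mathbb{Z}|$, hence $\tilde{\epsilon}_{2}(\zeta)\leq 1/(\zeta-1)$.

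For the lower bound $\tilde{\epsilon}_{2}(\zeta)\geq 1/(2(\zeta+1))$ I would invoke Theorem~\ref{machete}: if $\epsilon<1/(2(\zeta+1))$ then $(\zeta+1)\epsilon<1/2$, so $\varpi_{\epsilon,\zeta}$ is at most countable, i.e.\ $|\varpi_{\epsilon,\zeta}|\leq|\mathbb{Z}|$, and the $\sup$ form of $\tilde{\epsilon}_{2}$ yields the inequality.

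For the refined assertion under $\zeta\in(1,\infty)\setminus\Gamma$, Proposition~\ref{lemur} gives $\Theta\subseteq\Gamma$, so any such $\zeta$ lies outside $\Theta$. By Definition~\ref{exceptional} this means that for \emph{every} $\epsilon>1/(2(\zeta-1))$ the set $\varpi_{\epsilon,\zeta}$ fails to be at most countable, i.e.\ $|\varpi_{\epsilon,\zeta}|>|\mathbb{Z}|$; inserting this into the infimum form of $\tilde{\epsilon}_{2}$ yields $\tilde{\epsilon}_{2}(\zeta)\leq 1/(2(\zeta-1))$, capped at $1/2$ as before. Since the entire proof consists of translating already-proved statements through the definitions of $\tilde{\epsilon}_{1},\tilde{\epsilon}_{2}$, there is no genuine obstacle; the only care needed is the bookkeeping between the $\sup/\inf$ forms of the two quantities and the cardinality thresholds supplied by the cited theorems.
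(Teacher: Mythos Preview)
Your proposal is correct and matches the paper's own argument essentially line for line: the paper derives the bounds in the prose immediately preceding the theorem by invoking Theorem~\ref{muskatottonel} for both upper bounds, Theorem~\ref{machete} for the lower bound on $\tilde{\epsilon}_{2}$, and Proposition~\ref{lemur} together with Definition~\ref{exceptional} for the refined bound when $\zeta\notin\Gamma$. Your treatment is in fact slightly more explicit than the paper's, since you spell out why the $1/2$ cap appears in each minimum.
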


Since $\lim_{\zeta\to\infty}\frac{1}{2(\zeta-1)}/\frac{1}{2(\zeta+1)}=1$,
assuming the existence of arbitrarily large $\zeta\notin{\Gamma}$, we infer
the lower bound for $\tilde{\epsilon}_{2}$ is optimal up to any factor greater $1$.
We will proof similar unconditioned results for the other bounds in Section~\ref{sektion3}.
By virtue of Remark~\ref{auchremark}, the $0$-symmetry of the intervals connected to
$\varpi_{\epsilon,\zeta}$ is only needed in the last claim,
which can also be extended by replacing $1/2$ by some other constant in the definition of $\Gamma$.
Thus for any $\zeta>1$ and given interval $I$  modulo $1$ of length greater than $1/(\zeta-1)$,
there exists $\alpha\neq 0$ such that $\{\alpha\zeta^{n}\}$ lies in $I$
for all large $n$. The results concerning $\tilde{\epsilon}_{2}$ allow a similar interpretation
with interval length $2/(\zeta-1)$.

\subsection{The special case of algebraic $\zeta>1$}  \label{sektion3}

In the case of algebraic numbers $\zeta>1$, some bounds in
Theorem~\ref{epsilon} can be refined with a result due to Dubickas.
Combination with Theorem~\ref{epsilon} will lead to the proof of Theorem~\ref{hot}.

For $\zeta$ a Pisot number, we know due to Theorem~\ref{pisot}
that $\cap_{\epsilon>0} \varpi_{\epsilon,\zeta}\neq \emptyset$
and hence in particular $\tilde{\epsilon}_{1}(\zeta)=0$.
Otherwise, if $\zeta>1$ is algebraic but not a Pisot number or a Salem number and $\alpha\neq 0$,
Dubickas~\cite[Theorem~1]{5} showed that
\begin{equation} \label{eq:dub}
\limsup_{n\to\infty} \Vert\alpha\zeta^{n}\Vert\geq \frac{1}{\min\{L(\zeta),2l(\zeta)\}}.
\end{equation}
The same holds for Pisot and Salem numbers and all $\alpha\notin{\mathbb{Q}(\zeta)}$.
More generally, the expression $1/\min\{L(\zeta),2l(\zeta)\}$ is a lower bound for the minimum
distance from the smallest to the largest limit point of $\{\alpha\zeta^{n}\}$.
Here $L(\zeta)$ is defined as in Section~\ref{cardgap} and
$l(\zeta)=l(P)$ is the infimum among all $L(PG)$ where $G\in{\mathbb{R}[X]}$ runs over all
polynomials with either leading or constant coefficient $1$, where $P\in{\mathbb{Z}[X]}$ is the
minimal polynomial of $\zeta$ in lowest terms. Combination of \eqref{eq:dub}
and Theorem~\ref{epsilon} yields for $\zeta>1$ algebraic not a Pisot or a Salem number
\begin{equation} \label{eq:tag}
\frac{1}{\min\{L(\zeta),2l(\zeta)\}}\leq \tilde{\epsilon}_{1}(\zeta) \leq
\min\left\{\frac{1}{2},\frac{1}{2(\zeta-1)}\right\}.
\end{equation}
In particular $\tilde{\epsilon}_{1}(\zeta)\neq 0$.
Furthermore, the estimate \eqref{eq:tag} yields the criterion stated in Theorem~\ref{hot}
for an algebraic number to be a Pisot or Salem number. To exclude the case that such $\zeta$ is a
Salem number and thus prove Theorem~\ref{hot}, it suffices to notice that Dobrowolski~\cite{dobro}
showed that any complex polynomial $P\in\mathbb{C}[X]$ with a root on the unit circle 
satisfies $L(P)\geq 2 M(P)$. Hence $\zeta \leq M(P)\leq L(P)/2<L(P)/2+1$ for any Salem number $\zeta$
with minimal polynomial $P$. 
We add a remark concerning \eqref{eq:tag} and Theorem~\ref{hot}.
\begin{remark} \label{mahler}
The estimate $\zeta-1>l(\zeta)$ in view of \eqref{eq:tag}
would allow the conclusion that $\zeta$ must be a Pisot or a Salem number, but it cannot be satisfied.
The estimate $M(\zeta)\leq l(\zeta)$ for all algebraic $\zeta$ and $M(\zeta)=M(P)$
the Mahler measure of the minimal polynomial $P$ of $\zeta$ defined in \eqref{eq:tiefet},
is known~\cite{dubbull}. This would lead to $\zeta-1>l(\zeta)\geq M(\zeta)\geq \zeta>\zeta-1$, contradiction.
\end{remark}

We now allow $\zeta$ to be a Pisot or a Salem number.
Since $\mathbb{Q}(\zeta)$ is countable, the estimate \eqref{eq:dub} for $\alpha\notin{\mathbb{Q}(\zeta)}$
and Theorem~\ref{epsilon} further imply
\begin{equation} \label{eq:nacht}
\frac{1}{\min\{L(\zeta),2l(\zeta)\}}\leq \tilde{\epsilon}_{2}(\zeta) \leq
\min\left\{\frac{1}{2},\frac{1}{\zeta-1}\right\}
\end{equation}
for any algebraic $\zeta>1$. The consequences $\zeta-1\leq L(\zeta)$ and $\zeta-1\leq 2l(\zeta)$
are already implied by \eqref{eq:hoehet} and Remark~\ref{mahler}, respectively.
Moreover, we deduce that the condition $2(\zeta-1)>L(\zeta)$
implies $\frac{1}{2(\zeta-1)}< \frac{1}{L(\zeta)}\leq \tilde{\epsilon}_{2}(\zeta)$ and hence that
$\zeta$ is exceptional in the sense of Definition~\ref{exceptional}.
Recall this condition is satisfied for the Pisot numbers
$\zeta_{m,b}$ from Section~\ref{outline} for large $b$ defined above and the
quotient $L(\zeta_{m,b})/(\zeta_{m,b}-1)$ tends to
$1$ as $b\to\infty$. The same applies to any integer $\zeta>2$.
Similarly to the polynomials $P_{m,b}$ defined in Section~\ref{outline},
consider polynomials of the form $Q_{m,b}(X)=2X^{m}-bX^{m-1}-1$.  The largest real root
$\eta_{m,b}$ of $Q_{m,b}(X)$ is larger $b/2$ and $L(Q_{m,b})=b+3$,
such that $L(\eta_{m,b})/(\eta_{m,b}-1)>2$ is arbitrarily close to $2$ if $b$ is large.
Since $Q_{m,b}(X)$ is no Pisot or Salem polynomial we may apply
\eqref{eq:tag}. Summing up, we infer Theorem~\ref{notintuitiver}. Its claim can be
interpreted in the way that the upper bounds for
$\tilde{\epsilon}_{1}, \tilde{\epsilon}_{2}$ in Theorem~\ref{epsilon}
(or equivalently those in \eqref{eq:dub}) are not far from being optimal. Moreover
Theorem~\ref{notintuitiver} implies that there exist exceptional Pisot numbers of any given degree.

Even though any Pisot number belongs to $\Gamma$, see Section~\ref{fixedzeta},
the claim concerning $\tilde{\epsilon}_{2}$ reinterpreted in terms of paths of the tree from Section~\ref{fixedzeta}
seems not too intuitive. Given an exceptional Pisot number, for {\em any} given start value $N_{0}\geq 1$,
most paths in the corresponding tree $\mathscr{T}=\mathscr{T}(\zeta,1/L(\zeta),N_{0})$
from Section~\ref{fixedzeta} with root $N_{0}$ must be deterministic,
i.e. in the path $N_{0},N_{1},\ldots$ the values $\{N_{j}\zeta\}$ avoid the symmetric neighborhood
$I(\zeta):=[1-\frac{\zeta-1}{L(\zeta)},\frac{\zeta-1}{L(\zeta)}]\neq \emptyset$
of $1/2$ for all large $j$. Clearly, $I(\zeta)$ is an arbitrarily large
subinterval of the entire interval $[0,1]$ if $L(\zeta)/(\zeta-1)$ is sufficiently close to $1$.
Moreover, each path leads to an element of $\mathbb{Q}(\zeta)$ via the correspondence
from Section~\ref{fixedzeta}, more precisely $\alpha=\lim_{j\to\infty} N_{j}/\zeta^{j}\in{\mathbb{Q}(\zeta)}$.
It is not obvious how to prove all of this in an elementary way without Dubickas' result.
Numerical computations for $\zeta=\zeta_{2,4}=2+\sqrt{5}$ the root of $X^{2}-4X-1$ and various values of $N_{0}$
affirm however that the fractional parts $\{N_{j}\zeta\}$ are near integers, in particular
avoid the corresponding interval
\[
I(\zeta_{2,4}):=\left[1-\frac{\zeta_{2,4}-1}{L(\zeta_{2,4})},\frac{\zeta_{2,4}-1}{L(\zeta_{2,4})}\right]
=\left[\frac{5-\sqrt{5}}{6},\frac{1+\sqrt{5}}{6}\right]\approx[0.4607,0.5393],
\]
for most paths and rather small $j$. The continued fraction expansion of many of the resulting elements
in $\mathbb{Q}(\zeta_{2,4})$ end in period $\overline{4}$. Observe $\zeta_{2,4}=[4;\overline{4}]$.
For $N_{0}\in{\{1,3\}}$ there is only one path given by $N_{j+2}=4N_{j+1}+N_{j}$ for $j\geq -1$
and suitable $N_{-1}$. For $N_{0}=2$ there are only two paths with $N_{1}=8$ and $N_{1}=9$
respectively, and $N_{j+2}=4N_{j+1}+N_{j}$ for $j\geq 0$. For $N_{0}=3$,
if we increase the avoided interval $I(\zeta_{2,4})$ to say $[0.15,0.85]$, which corresponds to a rise of
from $\epsilon=1/L(\zeta)=1/6$ to $\epsilon=0.85/(1+\sqrt{5})\approx 0.2627$,
it seems there is a non-deterministic path given by $N_{-1}=1$ and the
recurrence $N_{j+2}=4N_{j+1}+N_{j}-1$ for $j\geq -1$, and the resulting tree
$\mathscr{T}(\zeta_{2,4},0.2627,3)$ is isomorphic to $\mathscr{T}^{\prime}$ described
in Section~\ref{fixedzeta}. In particular the set of paths is no longer finite.
For any larger avoided interval or value of $\epsilon$ there should be uncountably many.
Recall also that $\alpha\in{\varpi_{\epsilon,\zeta}}$ does not necessarily induce
a path in $\mathscr{T}(\zeta,\epsilon,N_{0})$ for some $N_{0}$, only the reverse claim is proved.

On the other hand, the result concerning $\tilde{\epsilon}_{1}$ is intuitive.
For $\epsilon=\frac{\delta}{2(\zeta-1)}$ with $\delta\in{(0,1)}$, consider
the recursive process defined by $N_{j+1}=\scp{N_{j}\zeta}$ as long as $\zeta\cdot[N_{j}-\epsilon,N_{j}+\epsilon]$
contains the neighborhood $[N_{j+1}-\epsilon,N_{j+1}+\epsilon]$ of $N_{j+1}$,
following the proof of Theorem~\ref{muskatottonel}. If for some
start value $N_{0}$ the process does not stop,
which means $(\{N_{j}\zeta\})_{j\geq 0}$ avoids some interval modulo $1$ centered at $1/2$,
it leads to $\alpha\in{\varpi_{\epsilon,\zeta}}$. The interval length tends to $0$ as $\delta\to 1$.
If otherwise for any start value $N_{0}$ the process stops at some index $j=j(N_{0})$,
although the process only yields a sufficient criterion,
we should expect that there is no arising $\alpha\in{\varpi_{\epsilon,\zeta}}$.
We should also expect that $\{N_{j}\zeta\}$ is dense
in $[0,1]$ for any start value $N_{0}$ and any algebraic $\zeta>1$ which is no Pisot number.
This argument suggests to conjecture that almost all real $\zeta>2$ satisfy
$\tilde{\epsilon}_{1}=\frac{1}{2(\zeta-1)}$ too. Further we add
that there is no reason why any Salem number $\zeta$ should belong to $\Gamma$. Thus it
is reasonable to expect that no Salem number is exceptional and hence
only Pisot numbers can satisfy $2(\zeta-1)>L(\zeta)$.

\subsection{The case of rational $\zeta>1$}  \label{sektions}
For the remainder of the paper we restrict to the case of rational $\zeta>1$.
We start with general comments on the distribution of powers of rationals modulo 1.
It has been intensely studied, but is still poorly understood.
For instance, it is unknown if the sequence $\{(3/2)^{n}\}$ is dense modulo $1$.
We quote some known results.
From Theorem~\ref{pisot} we infer that $\Vert \alpha\zeta^{n}\Vert$ does not
converge to $0$ as $n\to\infty$ for rational $\zeta>1$
which is no integer and any $\alpha\neq 0$.
This is equivalent to $\bigcap_{\epsilon>0} \varpi_{\epsilon,\zeta}=\emptyset$
for $\zeta\in{\mathbb{Q}\setminus{\mathbb{Z}}}$.
More generally, Vijayavagharan~\cite{viya} (see also~\cite{viya2}) proved
that the set of accumulation points of $(p/q)^{n}\bmod 1$
is always infinite unless $p/q$ is an integer. Pisot~\cite{pisot2}
generalized this by showing that in fact $\alpha \zeta^{n}\bmod 1$ has infinitely many limit
points if $\alpha\neq 0$ is real and $\zeta>1$ algebraic, unless in the case where $\zeta$ is
a Pisot number and $\alpha\in{\mathbb{Q}(\zeta)}$ where it must fail by Theorem~\ref{pisot}.
Dubickas~\cite{dubi} gave another proof of this fact.

Now we put our focus predominately on the values $\tilde{\epsilon}_{1}, \tilde{\epsilon}_{2}$.
We point out that in contrast to prior results, in the present
section the symmetry of the intervals with respect to $0$ modulo $1$ is mostly important.
It turns out that it is useful to distinguish the cases of $\zeta$ an integer or not.
First let $\zeta>1$ be an integer. Then any rational number of the form
$\alpha=M\zeta^{b}$ for $M,b$ integers leads to integers $\alpha\zeta^{n}$
for any $n\geq \vert b\vert$.
Hence $\tilde{\epsilon}_{1}(\zeta)=0$ for $\zeta>1$ an integer. Conversely,
writing $\alpha$ in base $\zeta$, it is not hard to see that
$\lim_{n\to\infty} \Vert \alpha \zeta^{n}\Vert=0$ implies
$\alpha\zeta^{n}\in{\mathbb{Z}}$ for all large $n$, and to deduce that
$\alpha$ must be of the given form.

For rational $\zeta=p/q>1$, the lower bound in \eqref{eq:tag} can be shown to be $1/L(\zeta)=1/(p+q)$.
Recall the notion of $\tau(p/q)$ from Section~\ref{outline}.
Dubickas improved his result \eqref{eq:dub} from Section~\ref{sektion3} for $\zeta\in{\mathbb{Q}}$ by showing
that for every rational $\zeta=p/q>1$ and $\alpha\neq 0$, with $\alpha$
irrational if $\zeta$ is an integer, the estimate
\begin{equation} \label{eq:dubgl}
\limsup_{n\to\infty}\Vert \alpha(p/q)^{n}\Vert \geq \tau(p/q)=
\frac{1}{2q}\left(1-\left(1-\frac{q}{p}\right) \prod_{m\geq 0} \left(1-\left(\frac{q}{p}\right)^{2^{m}}\right) \right)
>\frac{1}{p+q}
\end{equation}
holds~\cite[Theorem~3]{5}. We combine the facts from the integer and the non-integer case.

\begin{proposition} \label{weberknecht}
For rational $\zeta>1$ we have $\varpi_{\epsilon,\zeta}\neq \emptyset$ for every $\epsilon>0$
if and only if $\zeta$ is an integer and in this case
$\bigcap_{\epsilon>0}\varpi_{\epsilon,\zeta}=\mathscr{R}(\zeta):=\{M\zeta^{b}: M\in{\mathbb{Z}\setminus\{0\}},
b\in{\mathbb{Z}}\}$.
\end{proposition}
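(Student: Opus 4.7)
The plan is to handle the two directions separately depending on whether $\zeta$ is an integer. Suppose first that $\zeta=p/q$ with $q\geq 2$ and $\gcd(p,q)=1$. By Dubickas' inequality \eqref{eq:dubgl}, every $\alpha\neq 0$ satisfies $\limsup_{n\to\infty}\Vert \alpha(p/q)^{n}\Vert\geq \tau(p/q)>0$. Choosing any $\epsilon\in (0,\tau(p/q))$, no $\alpha\neq 0$ can lie in $\varpi_{\epsilon,\zeta}$, so $\varpi_{\epsilon,\zeta}=\emptyset$. This shows that whenever $\zeta$ is rational but not an integer, $\varpi_{\epsilon,\zeta}$ is empty for some $\epsilon>0$, giving one direction of the equivalence.

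For the integer case, let $\zeta=p\in\mathbb{Z}_{\geq 2}$. The easy inclusion $\mathscr{R}(p)\subseteq\bigcap_{\epsilon>0}\varpi_{\epsilon,p}$ is immediate: for any $\alpha=Mp^{b}\in\mathscr{R}(p)$ we have $\alpha p^{n}=Mp^{n+b}\in\mathbb{Z}$ for all $n\geq -b$, so $\Vert\alpha p^{n}\Vert=0$ for such $n$, which puts $\alpha$ into $\varpi_{\epsilon,p}$ for every $\epsilon>0$. In particular $\varpi_{\epsilon,p}\neq\emptyset$, closing the ``if'' part of the first assertion.

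The reverse inclusion $\bigcap_{\epsilon>0}\varpi_{\epsilon,p}\subseteq\mathscr{R}(p)$ is the substantive content. Given $\alpha$ with $\lim_{n\to\infty}\Vert\alpha p^{n}\Vert=0$, I may assume $\alpha>0$ (using $\Vert\alpha p^{n}\Vert=\Vert -\alpha p^{n}\Vert$ and the symmetry $M\leftrightarrow -M$ in $\mathscr{R}(p)$) and write the canonical base-$p$ expansion $\alpha=\sum_{i\leq K}a_{i}p^{i}$ with $a_{i}\in\{0,1,\ldots,p-1\}$ and no infinite tail of $(p-1)$'s. Then $\{\alpha p^{n}\}=\sum_{j\geq 1}a_{-n-j}p^{-j}$, and the leading tail digit $a_{-n-1}$ determines the location of this fractional part: if $a_{-n-1}\in\{1,\ldots,p-2\}$, then $\{\alpha p^{n}\}\in[1/p,1-1/p]$, so $\Vert\alpha p^{n}\Vert\geq 1/p$. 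The assumption $\Vert\alpha p^{n}\Vert\to 0$ therefore forces $a_{-n-1}\in\{0,p-1\}$ for all sufficiently large $n$. I would next argue that the resulting $\{0,p-1\}$-valued tail must in fact be eventually constant; in that case $\alpha$ is a finite base-$p$ rational (either directly, if the tail is eventually $0$, or after the standard carry if the tail is eventually $p-1$), hence $\alpha\in\mathscr{R}(p)$.

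The main obstacle is making the block-boundary step rigorous. Precisely, one has to show that a non-eventually-constant tail in $\{0,p-1\}$ produces infinitely many indices $n$ with $\Vert\alpha p^{n}\Vert$ bounded away from $0$. The key computation is that at a transition, say $a_{-m}=0$ and $a_{-m-1}=p-1$ followed by a run of $L$ further $(p-1)$-digits, the geometric series contribution gives $\{\alpha p^{m-1}\}=(1/p)(1-p^{-L})+O(p^{-L-1})$, which pins $\{\alpha p^{m-1}\}$ into a neighborhood of the point $1/p$; and $1/p$ sits at fixed positive distance from every integer. An analogous estimate holds at $(p-1)\to 0$ transitions. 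Careful book-keeping of surrounding block lengths is needed to ensure this neighborhood of $1/p$ does not shrink to $0$ as the transitions recede to infinity; beyond that point the argument is routine.
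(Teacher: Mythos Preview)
Your approach matches the paper's exactly: the paper disposes of the non-integer case via Dubickas' bound \eqref{eq:dubgl} and, for the integer case, simply says that ``writing $\alpha$ in base $\zeta$, it is not hard to see that $\lim_{n\to\infty}\Vert\alpha\zeta^{n}\Vert=0$ implies $\alpha\zeta^{n}\in\mathbb{Z}$ for all large $n$'' --- precisely the base-$p$ digit argument you spell out.

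One comment on your final paragraph: the block-boundary step is actually easier than you fear, and no book-keeping of block lengths is required. At a transition with $a_{-m}=0$ and $a_{-m-1}=p-1$ you have
\[
\{\alpha p^{m-1}\}=\frac{p-1}{p^{2}}+\sum_{j\geq 3}\frac{a_{-m+1-j}}{p^{j}}\in\Bigl[\frac{p-1}{p^{2}},\,\frac{1}{p}\Bigr],
\]
since the tail sum lies in $[0,p^{-2}]$. Hence $\Vert\alpha p^{m-1}\Vert\geq (p-1)/p^{2}$ uniformly, independent of the length $L$ of the following $(p-1)$-run. The symmetric transition $a_{-m}=p-1$, $a_{-m-1}=0$ gives $\{\alpha p^{m-1}\}\in[(p-1)/p,\,(p-1)/p+p^{-2}]$ and the same lower bound $(p-1)/p^{2}$ on $\Vert\alpha p^{m-1}\Vert$. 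So infinitely many transitions force $\limsup_{n}\Vert\alpha p^{n}\Vert\geq (p-1)/p^{2}>0$, and the tail must be eventually constant; your worry about shrinking neighborhoods does not materialise.
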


In view of \eqref{eq:dubgl}, for rational $\zeta=p/q>1$ which is not an integer we have
\[
\tilde{\epsilon}_{2}(p/q)\geq\tilde{\epsilon}_{1}(p/q)\geq \tau(p/q).
\]
Similarly for $\zeta=p/1>1$ an integer, since the numbers that violate
\eqref{eq:dubgl}, including $\mathscr{R}(\zeta)$, are rational and thus their set is countable, we have
\begin{equation} \label{eq:faktum}
\tilde{\epsilon}_{2}(p/1)\geq \tau(p/1).
\end{equation}
As mentioned in~\cite{dubidu}, it can be shown that
$\tau(p/q)>\frac{1}{p}-\frac{q^{2}}{p^{3}}$ for any rational $p/q>1$.
Since $\frac{1}{2(p-1)}<\frac{1}{p}-\frac{1}{p^{3}}<\tau(p/1)$ for $p\geq 3$, this confirms the claim from
Section~\ref{fixedzeta} that the set $\mathbb{N}_{\geq 3}$ is contained in the exceptional set defined there.
For $\zeta=p/1>1$ an integer, \cite[Corollary~2]{5} shows that for the choice $\alpha=\tau(p/1)$
there is actually equality in \eqref{eq:dubgl}. As mentioned subsequent to Corollary~\ref{kor2},
this means $\varpi_{\epsilon,\zeta}$ is at least countable for $\epsilon=\tau(p/1)$,
since it contains the number $\tau(p/1)p^{m}$ for any integer $m\geq 0$.
It is however not clear from the construction in~\cite{5}
if there are uncountably many $\alpha\in{\varpi_{\epsilon,\zeta}}$
for given $\epsilon>\tau(p/1)$, which together with \eqref{eq:faktum}
would imply $\tilde{\epsilon}_{2}(p/1)=\tau(p/1)$.
Theorem~\ref{muskatottonel} gives the weaker
upper bound $1/(\zeta-1)=1/(p-1)$ for $\tilde{\epsilon}_{2}(p/1)$.
We can improve this bound with an explicit construction.
Consider the set $Z\subseteq \mathbb{R}$ that consists of $\alpha\in{(0,1)}$ whose base $p$ expansion
has the following properties: only the digits $0,p-1$ appear,
there are at most two consecutive $p-1$ digits and the distance between blocks with two consecutive digits $p-1$
tends to infinity, and the digits $0$ are isolated. In other words, it is derived from the periodic digit
sequence $\overline{0,p-1}$ by plugging in single additional $p-1$ digits at large distances.
The set $Z$ is obviously uncountable. Furthermore, distinguishing the cases of
$n$ such that the first digit after the comma is $0$ and $p-1$ respectively,
leads to
\begin{align*}
\{\alpha\zeta^{n}\}&\leq \frac{p-1}{p^{2}}+\frac{p-1}{p^{3}}+\frac{p-1}{p^{5}}
+\frac{p-1}{p^{7}}+\cdots+\frac{p-1}{p^{2l+1}}+\frac{p-1}{p^{2l+2}}+\frac{p-1}{p^{2l+4}}+\ldots,  \\
\frac{p-1}{p}&\leq \{\alpha\zeta^{n}\}\leq \frac{p-1}{p}+\frac{p-1}{p^{3}}+\frac{p-1}{p^{5}}+\cdots,
\end{align*}
respectively for $\alpha\in{Z}$.
By construction we may let $l\to\infty$ as $n\to\infty$, so evaluating the geometric series
leads to the bounds $(p^{2}+p-1)/(p^{3}+p^{2})$ and $1/(p+1)$ for
$\Vert \alpha p^{n}\Vert$, respectively. The first bound is larger, thus
$\limsup_{n\to\infty} \Vert \alpha p^{n}\Vert\leq 1/p-1/(p^{3}+p^{2})$ for all $\alpha\in{Z}$.
Summarizing the facts on the case $\zeta=p/1$ proves Theorem~\ref{jep}.

Now we treat the case $\zeta\in{\mathbb{Q}\setminus \mathbb{Z}}$. In this case we can refine the recursive methods
from Theorem~\ref{muskatottonel} and Theorem~\ref{machete}. First
recall the definitions and remarks subsequent to Theorem~\ref{muskatottonel}.
For $\zeta\in{\mathbb{Q}\setminus \mathbb{Z}}$ the numbers
$\{N_{j}\zeta\}$ in any path are contained in the finite set $\{0,1/q,\ldots,(q-1)/q\}$.
Thus if $q$ is odd then certainly no path will be dense at $1/2$ and so $\zeta\in{\Gamma}$.
For even $q$, in the generic case we should expect $\{N_{j}\zeta\}=1/2$
infinitely often in any path, so $\zeta\notin{\Gamma}$ and thus $\zeta$ is not exceptional.
It is hard to predict if this heuristic argument applies to all such rationals.
However, we can slightly improve the bound $1/(\zeta-1)=\frac{q}{p-q}$ from
Theorem~\ref{muskatottonel} for all rational $\zeta>1$.
This will in particular imply that all rationals $p/2$ for $p$ odd are not exceptional.

\begin{proposition}  \label{p}
Let $\zeta=p/q$ with $p>q\geq 2$ and $(p,q)=1$. Then for any $\epsilon\geq \frac{q-1}{p-q}$ the set
$\varpi_{\epsilon,\zeta}$ is uncountable.
\end{proposition}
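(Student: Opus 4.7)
The plan is to refine the nested-interval construction from Theorem~\ref{muskatottonel} by exploiting the discrete spectrum of $\{pN/q\}$ for rational $\zeta = p/q$. First I would reduce to $\epsilon < 1/2$ (otherwise $\varpi_{\epsilon,\zeta} = \mathbb{R}\setminus\{0\}$ trivially), and note that it suffices to treat the boundary case $\epsilon = (q-1)/(p-q)$, since enlarging $\epsilon$ only enlarges each $\varpi_{\epsilon,\zeta}$. Following the proof of Theorem~\ref{muskatottonel} I would build nested intervals $I_j := [(N_j - \epsilon)\zeta^{-n_0-j},\, (N_j + \epsilon)\zeta^{-n_0-j}]$ indexed by a sequence of integers $(N_j)$, the nesting $I_{j+1} \subseteq I_j$ being equivalent to
\[
N_{j+1} \in J_j := \left[\frac{pN_j}{q} - \frac{q-1}{q},\ \frac{pN_j}{q} + \frac{q-1}{q}\right].
\]

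The crucial step is counting integer solutions in $J_j$. Since $\gcd(p,q)=1$, the fractional part $\{pN_j/q\}$ equals $k/q$ for some $k \in \{0, 1, \ldots, q-1\}$, and vanishes precisely when $q \mid N_j$. If $q \nmid N_j$, so $1 \leq k \leq q-1$, both $\lfloor pN_j/q \rfloor$ and $\lceil pN_j/q \rceil$ lie in $J_j$ (at distances $k/q$ and $(q-k)/q$, both at most $(q-1)/q$), yielding exactly two admissible successors $N_{j+1}$; no other integer lies within $J_j$. If $q \mid N_j$, then the center $pN_j/q$ is itself the unique integer in $J_j$.

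To extract uncountably many paths I would start from some $N_0$ with $q \nmid N_0$ (say $N_0 = 1$) and contract the forced runs. When $q \mid N_j$, writing $N_j = q^k M$ with $\gcd(M,q) = 1$ gives the unique successor $N_{j+1} = p q^{k-1} M$, so the $q$-adic valuation satisfies $v_q(N_{j+1}) = v_q(N_j) - 1$; hence after at most $v_q(N_j)$ forced steps the path returns to a node with $q \nmid N$, at which branching resumes. Observing also that consecutive integers cannot both be divisible by $q$, each node of the contracted tree inherits exactly two children, producing an infinite binary tree and therefore uncountably many paths.

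Finally I would check that distinct paths yield distinct $\alpha = \bigcap_j I_j \in \varpi_{\epsilon,\zeta}$: whenever two paths first disagree at step $j$, the two values $N_j^{(0)}, N_j^{(1)}$ are consecutive integers (the only two elements of $J_{j-1}$ at a branching node), and the hypothesis $\epsilon < 1/2$ forces the corresponding intervals $I_j$ to be disjoint, so the limit points differ. I expect the main delicacy to be the bookkeeping around forced runs; once one sees that $v_q$ strictly decreases within any forced run, the binary-branching argument goes through and yields the proposition.
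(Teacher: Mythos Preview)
Your proposal is correct and follows essentially the same route as the paper: both use the nested-interval/tree construction from Theorem~\ref{muskatottonel}, observe that the $q$-adic valuation strictly drops along forced steps so that every run where $q\mid N_j$ terminates, and conclude that at each node with $q\nmid N_j$ the condition $\epsilon(\zeta-1)\geq (q-1)/q$ yields two admissible successors, producing an uncountable set of paths. The paper compresses the last part by appealing to the tree discussion preceding Proposition~\ref{lemur}, whereas you spell out the contracted-tree and injectivity arguments explicitly; the remark that the two successors at a branching node are consecutive and hence not both divisible by $q$ is harmless but not actually needed once the $v_q$-descent is in place.
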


\begin{proof}
First observe that for every $N_{0}$, the sequence $(N_{j})_{j\geq 1}$ defined by $N_{j+1}=\scp{\zeta N_{j}}$
cannot have the property $\{\zeta N_{j}\}=0$ for all $j\geq j_{0}$.
Without loss of generality assume $j_{0}=0$. Indeed, if $\nu_{q}(N_{0})$
denotes the largest power of $q$ dividing $N_{0}$, then $\zeta\cdot N_{\nu_{q}(N_{0})}$ is not an integer.
Hence $\Vert\zeta N_{j}\Vert\geq 1/q$ for some $j\geq 0$.
It suffices to require $\epsilon\zeta\geq 1+\epsilon-1/q$ to ensure that
for any such index $j$ the corresponding interval $\zeta\cdot[N_{j}-\epsilon,N_{j}+\epsilon]$ of
length $2\epsilon\zeta$ and midpoint $\zeta N_{j}$
contains two consecutive integers. The condition is equivalent
to $\epsilon\geq \frac{q-1}{p-q}$, and repeating this argument shows that the set of paths
and thus $\varpi_{\epsilon,\zeta}$ is uncountable, as carried out preceding Proposition~\ref{lemur}.
\end{proof}

For odd $q$, we can also slightly improve the upper bound for $\tilde{\epsilon}_{1}(p/q)$ from Theorem~\ref{epsilon}.

\begin{proposition} \label{q}
Let $\zeta=p/q$ with $p>q\geq 2$ and $(p,q)=1$ and $q$ odd. Then for any
$\epsilon\geq \frac{q-1}{2q}\cdot\frac{1}{\zeta-1}=\frac{q-1}{2(p-q)}$
the set $\varpi_{\epsilon,\zeta}$ is dense in $\mathbb{R}$.
\end{proposition}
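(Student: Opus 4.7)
The plan is to adapt the recursive construction from the proof of Theorem~\ref{muskatottonel}, refining the step that requires $2\epsilon\zeta\geq 1+2\epsilon$ by exploiting the fact that for $q$ odd we always have $\scp{\zeta N}\neq \zeta N\pm 1/2$, so the distance from $\zeta N$ to the nearest integer is bounded strictly below $1/2$. Concretely, since $\gcd(p,q)=1$ and $q$ is odd, for every integer $N$ the number $\zeta N=pN/q$ has fractional part of the form $k/q$ with $k\in\{0,1,\ldots,q-1\}$, and hence
\[
\Vert \zeta N\Vert \leq \frac{q-1}{2q}.
\]

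First, given any non-empty interval $(c,d)$, I would mimic the start of the construction in Theorem~\ref{muskatottonel}: choose a large integer $n_{0}$ such that $(d-c)\zeta^{n_{0}}>1+2\epsilon$ and pick an integer $N_{0}$ with $[N_{0}-\epsilon,N_{0}+\epsilon]\subseteq (c\zeta^{n_{0}},d\zeta^{n_{0}})$. Set $I_{0}=[(N_{0}-\epsilon)/\zeta^{n_{0}},(N_{0}+\epsilon)/\zeta^{n_{0}}]\subseteq (c,d)$. Inductively, having constructed the integer $N_{j}$ and the interval $I_{j}=[(N_{j}-\epsilon)/\zeta^{n_{0}+j},(N_{j}+\epsilon)/\zeta^{n_{0}+j}]$, define $N_{j+1}:=\scp{\zeta N_{j}}$ and $I_{j+1}:=[(N_{j+1}-\epsilon)/\zeta^{n_{0}+j+1},(N_{j+1}+\epsilon)/\zeta^{n_{0}+j+1}]$.

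The heart of the argument is to verify $I_{j+1}\subseteq I_{j}$, or equivalently $[N_{j+1}-\epsilon,N_{j+1}+\epsilon]\subseteq \zeta\cdot[N_{j}-\epsilon,N_{j}+\epsilon]$. The latter interval has midpoint $\zeta N_{j}$ and half-length $\epsilon\zeta$, and $N_{j+1}$ is at distance $\Vert \zeta N_{j}\Vert$ from the midpoint. Containment therefore holds iff $\epsilon\zeta-\Vert \zeta N_{j}\Vert\geq \epsilon$, i.e.
\[
\epsilon(\zeta-1)\geq \Vert \zeta N_{j}\Vert.
\]
By the parity bound above, it suffices that $\epsilon(\zeta-1)\geq (q-1)/(2q)$, which is exactly the hypothesis $\epsilon\geq (q-1)/(2(p-q))$ after multiplying by $q$.

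Finally, the nested sequence $I_{0}\supseteq I_{1}\supseteq\cdots$ has lengths $2\epsilon/\zeta^{n_{0}+j}\to 0$, so $\bigcap_{j\geq 0} I_{j}=\{\alpha\}$ for some $\alpha\in (c,d)$, and by construction $\alpha\zeta^{n_{0}+j}\in [N_{j}-\epsilon,N_{j}+\epsilon]$ for all $j\geq 0$, so $\alpha\in \varpi_{\epsilon,\zeta}$. Since $(c,d)$ was arbitrary, $\varpi_{\epsilon,\zeta}$ is dense in $\mathbb{R}$. The only non-routine point is the parity observation used to replace $1/2$ by $(q-1)/(2q)$; the rest is the same recursive nesting as in Theorem~\ref{muskatottonel}.
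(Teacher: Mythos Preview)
Your proof is correct and follows essentially the same route as the paper. Both arguments adapt the nested-interval construction of Theorem~\ref{muskatottonel}, replacing the generic bound $\Vert \zeta N_{j}\Vert\leq 1/2$ by the sharper $\Vert \zeta N_{j}\Vert\leq (q-1)/(2q)$, which the paper phrases equivalently as ``$\{N_{j}\zeta\}$ has distance at least $1/(2q)$ from $1/2$''; the resulting sufficient condition $\epsilon(\zeta-1)\geq (q-1)/(2q)$ is identical.
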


\begin{proof}
Again we follow the proof of Theorem~\ref{muskatottonel}. We have to show that
for $\epsilon$ as in the proposition,
in any step the interval $\zeta\cdot[N_{j}-\epsilon,N_{j}+\epsilon]$ of length
$\zeta\epsilon$ contains the symmetric neighborhood of $0$ of length $2\epsilon$ of
some integer. Since $q$ is odd,
the fractional part $\{N_{j}\zeta\}$ has distance at least $1/(2q)$ from $1/2$. Thus it
suffices to have $\zeta\epsilon\geq 1/2+\epsilon-1/(2q)$, which leads to the given bound,
to guarantee the claim.
\end{proof}

For $q=2$, Dubickas~\cite{dubidu} showed that
\begin{equation} \label{eq:dualte}
 \left\Vert \alpha \left(\frac{p}{2}\right)^{n}\right\Vert \leq \frac{1}{p}, \qquad n\geq 0,
\end{equation}
has a solution $\alpha\neq 0$ for any fixed odd $p\geq 3$. As remarked in~\cite{dub},
it follows from \eqref{eq:dubgl} that the bound in \eqref{eq:dualte} cannot be improved
to $p^{-1}-4p^{-3}<\tau(p/1)$. Finally, the bound from Theorem~\ref{machete} can be slightly improved
for $q$ odd with a similar method.

\begin{proposition} \label{r}
Let with $p>q\geq 2$ and $(p,q)=1$ and $q$ odd. Then for any
$\epsilon< \frac{q+1}{2q}\cdot\frac{1}{\zeta+1}=\frac{q+1}{2(p+q)}$
the set $\varpi_{\epsilon,\zeta}$ is at most countable.
\end{proposition}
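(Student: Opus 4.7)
The plan is to follow the same basic scheme as the proof of Theorem~\ref{machete}, but to exploit the fact that $q$ odd forces $\{\zeta M_n\}$ to stay bounded away from $1/2$ by a fixed amount. Given $\alpha\in \varpi_{\epsilon,\zeta}$ with parameter $n_0=n_0(\alpha,\zeta,\epsilon)$, the proof of Theorem~\ref{machete} produces integers $M_n$ for $n\geq n_0$ with $\alpha=\lim_{n\to\infty} M_n/\zeta^n$ and a nesting condition equivalent to
\[
|\zeta M_n - M_{n+1}|\leq (\zeta+1)\epsilon, \qquad n\geq n_0.
\]
As in that proof, countability will follow as soon as one shows $M_{n+1}$ is uniquely determined by $M_n$, since then $\alpha\mapsto (n_0,M_{n_0})$ is an injection of $\varpi_{\epsilon,\zeta}$ into $\mathbb{Z}^2$.

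Next I would quantify how far $\zeta M_n$ sits from its second-nearest integer. Writing $\zeta=p/q$ with $(p,q)=1$, the fractional part $\{\zeta M_n\}=\{pM_n/q\}$ belongs to the finite set $\{0,1/q,2/q,\ldots,(q-1)/q\}$. When $q$ is odd, the value $1/2$ is excluded, so
\[
\|\zeta M_n\|\;\leq\; \frac{q-1}{2q}, \qquad \text{and hence}\qquad 1-\|\zeta M_n\|\;\geq\; \frac{q+1}{2q}.
\]
(The extremal case $\{\zeta M_n\}=0$ is even better, giving distance $1$ to the next integer.) Consequently, an integer distinct from the nearest integer to $\zeta M_n$ lies at distance at least $(q+1)/(2q)$ from $\zeta M_n$.

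Under the hypothesis $\epsilon<\tfrac{q+1}{2(p+q)}$, we have $(\zeta+1)\epsilon<\tfrac{q+1}{2q}$, which by the previous step forbids two distinct integers from simultaneously satisfying $|\zeta M_n - M|\leq (\zeta+1)\epsilon$. Thus $M_{n+1}$ is determined by $M_n$, which determines the entire tail $(M_n)_{n\geq n_0}$ and hence $\alpha$. Countability of $\varpi_{\epsilon,\zeta}$ follows as described.

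The main (very small) obstacle is book-keeping around the case $\{\zeta M_n\}=0$, which I already accounted for above; the rest is essentially an arithmetic refinement of the bound $(\zeta+1)\epsilon<1/2$ used in Theorem~\ref{machete}, sharpening $1/2$ to $(q+1)/(2q)$ courtesy of the odd-denominator hypothesis.
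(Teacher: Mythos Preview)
Your proof is correct and follows essentially the same approach as the paper: both refine the proof of Theorem~\ref{machete} by noting that for odd $q$ the fractional part $\{\zeta M_n\}\in\{0,1/q,\ldots,(q-1)/q\}$ stays at distance at least $1/(2q)$ from $1/2$, so that the uniqueness condition $(\zeta+1)\epsilon<1/2$ may be relaxed to $(\zeta+1)\epsilon<1/2+1/(2q)=(q+1)/(2q)$. Your phrasing via the bound $1-\Vert\zeta M_n\Vert\geq (q+1)/(2q)$ on the distance to the second-nearest integer is equivalent.
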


\begin{proof}
Proceed as in the proof of Theorem~\ref{machete}. Note that since $q$ is odd
we have $\vert \{\zeta M\}-1/2\vert\geq 1/2q$ for any integer $M$.
Hence, given $M_{n}$, for $\vert M_{n}\zeta-M_{n+1}\vert\leq \epsilon$ to determine a unique $M_{n+1}$,
it suffices to assume $(\zeta+1)\epsilon<1/2+1/(2q)$. Rearrangement leads to the given bound.
\end{proof}

Now we have all ingredients to prove Theorem~\ref{gutkort}.

\begin{proof}[Proof of Theorem 2.6]
Combination of \eqref{eq:dubgl}, \eqref{eq:dualte}, Theorem~\ref{epsilon}, Proposition~\ref{p}, Proposition~\ref{q}
and Proposition~\ref{r} in terms of the quantities $\tilde{\epsilon}_{1},\tilde{\epsilon}_{2}$.
\end{proof}

We enclose several remarks concerning Theorem~\ref{gutkort}.
The lower bounds are always non-trivial, whereas the upper bounds
are only in case of $\zeta$ not too small.
Moreover, for $q\geq 2$, indeed $\tau(p/q)< \frac{q}{2(p-q)}$ which enables the first inequality.
Recall that for $q=1$, we have $\frac{1}{p+1}<\tau(p/1)$ such that $\tau(p/1)\leq \frac{q}{2(p-q)}$
cannot hold for any $p\geq 3$. However, $q=1$ is excluded in Theorem~\ref{gutkort}.
It further follows from $1/(p+q)<\tau(p/q)$ that for $q=2$ the refined upper bound
$\frac{q-1}{2(p-q)}$ for $\tilde{\epsilon}_{1}(p/q)$ is not valid at least for $p\geq 7$.
This corresponds to the fact that the fractional parts $\{N_{j}(p/2)\}$ must equal $1/2$ infinitely
often in any path in Proposition~\ref{q} by a very similar argument as in the proof of Proposition~\ref{p}.
It is easily checked that the maximum in the lower bound for $\tilde{\epsilon}_{2}$ coincides with
$\frac{q}{2(p+q)}$  (resp. $\frac{q+1}{2(p+q)}$) unless $q=2$. In particular one may drop
the expression $\tau(p/q)$ in the maximum in \eqref{eq:diebt} without any loss.
Notice also that the remarks preceding Proposition~\ref{p}
suggest that actually $\frac{q}{2(p-q)}$ should be an upper bound for $\tilde{\epsilon}_{2}(p/q)$
for even $q\geq 4$ too (this is true for $q=2$ since the bound coincides with $\frac{q-1}{p-q}$).

Next we prove Theorem~\ref{hundertt}, which confirms the bound $1/(p+q)$ from \eqref{eq:tag}
for rational $\zeta=p/q>1$ with an easier proof and contains some additional new information.
The proof is related to the proof of Proposition~\ref{p}.

\begin{proof} [Proof of Theorem 2.7]
First we show it is true for any $\epsilon$ with strict inequality $\epsilon<1/(p+q)$.
Assume the claim is false. Then in particular
$\Vert \alpha\zeta^{n}\Vert < 1/(p+q)$ for all $n\geq n_{0}(\alpha,\zeta)$.
Write $\alpha\zeta^{n}=A_{n}+\delta_{n}$ with integers $A_{n}=\scp{\alpha\zeta^{n}}$ and
$-1/(p+q)< \delta_{n}< 1/(p+q)$. Then $\alpha\zeta^{n+1}=\frac{p}{q}A_{n}+\frac{p}{q}\delta_{n}$.
If $\frac{p}{q}A_{n}$ is no integer, then it has distance at least $1/q$ to the nearest
integer. But $\vert \frac{p}{q}\delta_{n}\vert< \frac{p}{q}\cdot\frac{1}{p+q}=\frac{p}{q(p+q)}$.
So we have
\[
\Vert \alpha\zeta^{n+1}\Vert > \frac{1}{q}-\frac{p}{q(p+q)}=\frac{1}{p+q}>\delta_{n+1}
\]
by triangular inequality, a contradiction. Hence $\frac{p}{q}A_{n}$ must be an integer and clearly
$\scp{\alpha \zeta^{n+1}}= \frac{p}{q}A_{n}=A_{n+1}$ again by $\vert \frac{p}{q}\delta_{n}\vert< \frac{p}{q(p+q)}$.
However, this applies to $n+1,n+2,\ldots$ as well by the same argument.
Hence $A_{n+j}=(p/q)^{j}A_{n}$ for all $0\leq j\leq l$. Since $\alpha\neq 0$ by definition,
and we may assume that $n$ is large enough such that $A_{n}\neq 0$,
the integer $A_{n}\neq 0$ can only be divisible by at most $\log A_{n}/\log q$ powers of $q$.

Note that $A_{n}=\vert\scp{\alpha(p/q)^{n}}\vert\leq \vert\alpha\vert(p/q)^{n}+1/2$. Thus
\[
l\leq \frac{\log A_{n}}{\log q} \leq n\cdot\left(\frac{\log p}{\log q}-1\right)+\log \vert\alpha\vert+o(1),
\qquad n\to\infty.
\]

It remains to extend the result to $\epsilon=1/(p+q)$. If there are at most finitely many integers
$m$ such that $\Vert \alpha(p/q)^{m}\Vert=1/(p+q)$, then the assertion is
implied by our proof of the case $\epsilon<1/(p+q)$. We show this is always true.
For any $m$ with equality, we have the equation $\alpha(\frac{p}{q})^{m}=M_{m}\pm \frac{1}{p+q}$
for an integer $M_{m}$. It follows $\alpha$ must be rational too, say $\alpha=a/b$ with integers $a,b$,
and the equation becomes $(p+q)(ap^{m}-M_{m}bq^{m})=\pm bq^{m}$. For a prime $r$ denote
by $\nu_{r}(.)$ the multiplicity of $r$.
By $(p,q)=1$, any prime divisor $r$ of $q$ is not contained in $p+q$, and for $m>\nu_{r}(a)$ we
have $\nu_{r}(ap^{m}-M_{m}bq^{m})= \nu_{r}(a)$. On the other hand, $\nu_{r}(bq^{m})\geq \nu_{r}(q^{m})\geq m$.
Hence for any $m>\nu_{r}(a)$ we cannot have equality.
\end{proof}

\begin{remark}
It suffices to take $n\geq n_{0}=n_{0}(\alpha,\zeta):=\max\{0,-\log \vert\alpha\vert/\log (p/q)\}$
to ensure $A_{n}\neq 0$. Theorem~\ref{hundertt} in particular yields $l\ll_{\alpha,\zeta} n$.
\end{remark}

\begin{remark}
The last part of the proof could have been inferred from the more general~\cite[Lemma~2.1]{dubidu}.
It asserts that for $p/q\in{\mathbb{Q}\setminus \mathbb{Z}}$ the equation
$\{\alpha(p/q)^{n}\}=t$ can have only finitely many solutions $n$ for any $t\in{[0,1)}$ and
fixed $\alpha\neq 0$. In this context, we want to add that if $\{\alpha\zeta^{n}\}=t$ for real
$\zeta\neq 0, \alpha\neq 0, t\in{[0,1)}$ and at least three values $n$, then $\alpha,\zeta,t$ have
to be all algebraic. Indeed, if there exist integers $n_{i},N_{i}$ such that
$\alpha\zeta^{n_{i}}=N_{i}+t$ for $1\leq i\leq 3$, then
$(\zeta^{n_{3}}-\zeta^{n_{2}})/(\zeta^{n_{2}}-\zeta^{n_{1}})\in{\mathbb{Q}}$.
This can be transformed in a polynomial equation with rational coefficients, so $\zeta$
must be algebraic. Thus $\alpha=(N_{2}-N_{1})/(\zeta^{n_{2}}-\zeta^{n_{1}})$ implies $\alpha$
must be algebraic as well, hence $t$ as well. On the other hand, for $\zeta$ a root of an integer,
$\alpha\in{\mathbb{Z}}$ and $t=0$, there are infinitely many integers $n$ such that $\{\alpha\zeta^{n}\}=t$.
It can be shown that at least for $t=0$, the restrictions $\zeta=\sqrt[m]{N}$ and $\alpha=\frac{A}{B}\zeta^{g}$
for integers $N,A,B,g$ are necessary too, see~\cite[Proposition~2.27]{schlei}.
\end{remark}

\subsection{The asymmetric case} \label{ende}
For sake of completeness we quote some more facts concerning the distribution
of $\alpha\zeta^{n}$ for rational $\zeta>1$ concerning
intervals mod $1$ whose center is not $0$. Many of these can be found (without proofs)
on the first page of~\cite{28} too. Tijdeman~\cite{22} showed that
\begin{equation} \label{eq:butter}
 0\leq \left\{\alpha \left(\frac{p}{q}\right)^{n}\right\}\leq \frac{q-1}{p-q}, \qquad n\geq 0
\end{equation}
has a solution $\alpha\in{[m,m+1)}$ for any rational number $p/q$ and $m\geq 1$.
We recognize the upper bound as the bound for $\tilde{\epsilon}_{2}$ in Theorem~\ref{gutkort},
where the interval has twice the length. The length for the $0$-symmetric interval
concerning $\tilde{\epsilon}_{1}$ in Theorem~\ref{gutkort} has the same
length for odd $q$ and is slightly larger for even $q$.
Clearly \eqref{eq:butter} never admits an improvement of the upper bound for $\tilde{\epsilon}_{1}$
in Theorem~\ref{gutkort}. In particular for $q=2$ and odd $p\geq 3$, we obtain from \eqref{eq:butter} that
\begin{equation} \label{eq:birne}
 0\leq \left\{\alpha \left(\frac{p}{2}\right)^{n}\right\}\leq \frac{1}{p-2}, \qquad n\geq 0
\end{equation}
has a solution $\alpha\in{[m,m+1)}$ for integer $m\geq 1$. Compare \eqref{eq:birne} to \eqref{eq:dualte}.

Dubickas bound \eqref{eq:tag}, or equivalently Theorem~\ref{hundertt}, show that the upper bounds
in \eqref{eq:butter} cannot be improved to $1/(p+q)$. In particular, the bounds in
\eqref{eq:dualte}, \eqref{eq:birne} cannot be replaced by
$1/(p+2)$ for any pair $(p,\alpha)$ with odd $p\geq 5$ and real $\alpha\neq 0$.
Conversely, the uniform bounds in \eqref{eq:tag} and Theorem~\ref{hundertt}
are not far from being optimal, in particular if $\zeta=p/q$ is large.

In the famous special case $\zeta=3/2$, it was shown in~\cite{24} that
\[
 \limsup_{n\to\infty} \left\{\alpha \left(\frac{3}{2}\right)^{n}\right\}-
 \liminf_{n\to\infty} \left\{\alpha \left(\frac{3}{2}\right)^{n}\right\}\geq \frac{1}{3}
\]
for any $\alpha>0$. More generally, Theorem~1 in~\cite{dubi2} due to Dubickas asserts
\[
\limsup_{n\to\infty} \left\{ \alpha\left(\frac{p}{q}\right)^{n}\right\} -
\liminf_{n\to\infty} \left\{\alpha\left(\frac{p}{q}\right)^{n}\right\} \geq \frac{1}{p}
\]
for $p/q\notin{\mathbb{Z}}$ greater than $1$ and all $\alpha\neq 0$,
such as all irrational $\alpha$ if $p/q=p$ is an integer. As pointed out in~\cite{dubi2},
in the integer case $\zeta=p/1$ the bound $1/p$ is sharp, and $\alpha$ with equality can be readily constructed.
For further results on (unions of) subintervals of $[0,1]$ containing the numbers $\{\alpha(p/q)^{n}\}$
for all $n\geq 1$ and a given rational $p/q$, or for which such $\alpha$ does not exist, see~\cite{dubidu}.
Choquet~\cite{23} proved there exists $\alpha$ such that $1/19\leq \{\alpha(3/2)^{n}\}\leq 1-1/19$ for
all $n\geq 1$. On the other hand, we have
\[
\inf_{\alpha\neq 0}\limsup_{n\to\infty} \left\Vert \alpha(p/q)^{n}-\frac{1}{2}\right\Vert \geq \frac{1-e(q/p)T(q/p)}{2q}
\]
due to Dubickas~\cite{5}, where $e(q/p)=1-q/p$ if $p+q$ is even and $e(q/p)=1$ if $p+q$ is odd
and $T(z):=\prod_{m\geq 0} (1-z^{2^{m}})$. Hence $1/19\approx 0.0526$
cannot be replaced by $1/2-(1-T(2/3))/4\approx 0.2856$.

\vspace{1cm} 

The author thanks the anonymous referee for the careful review and A. Dubickas for pointing out
the paper of E. Dobrowolski to improve the initial version of Theorem~\ref{hot}.

\end{document}